\documentclass[11pt, reqno]{amsart}
\usepackage{amsmath, amsthm, amscd, amsfonts, amssymb, graphicx, color, mathtools, mathrsfs}
\usepackage[bookmarksnumbered, colorlinks, plainpages]{hyperref}
\usepackage{enumerate}
\usepackage{setspace}
\usepackage{multicol}
\usepackage[margin=1in]{geometry}
\usepackage{comment}
\usepackage{booktabs}
\usepackage{caption}
\usepackage{subcaption}
\usepackage{pgfplots}

\theoremstyle{definition}
\newtheorem{theorem}{Theorem}[section]
\newtheorem{lemma}[theorem]{Lemma}
\newtheorem{proposition}[theorem]{Proposition}

\newtheorem{definition}[theorem]{Definition}

\newtheorem{algorithm}[theorem]{Algorithm}
\newtheorem{remark}[theorem]{Remark}
\numberwithin{equation}{section}

\newcommand{\cal}{\mathcal}
\newcommand{\bff}{\boldsymbol}
\newcommand{\bb}{\mathbb}
\newcommand{\leqs}{\lesssim}
\newcommand{\geqs}{\gtrsim}

\newcommand{\dt}{\mathrm{d}t}
\newcommand{\dtt}{\mathrm{d}_t}
\newcommand{\ddt}{\frac{\mathrm{d}}{\mathrm{d}t}}
\newcommand{\dx}{\mathrm{d}\bff{x}}

\newcommand{\dtau}{\mathrm{d}\tau}
\newcommand{\norm}[2]{\left\|{#1}\right\|_{#2}}
\newcommand{\inpro}[2]{\left\langle#1,#2\right\rangle}
\newcommand{\abs}[1]{\left|{#1}\right|}

\usepackage[normalem]{ulem}
\newcommand\tsout{\bgroup\markoverwith{\textcolor{red}{\rule[0.5ex]{2pt}{1.4pt}}}\ULon}
\newcommand{\stkout}[1]{\ifmmode\text{\tsout{\ensuremath{#1}}}\else\tsout{#1}\fi}

\allowdisplaybreaks

\begin{document}
	\setcounter{page}{1}
	
	\title[The Landau--Lifshitz--Bloch equation with spin diffusion]
	{The Landau--Lifshitz--Bloch equation with spin diffusion: global strong solution and finite element approximation}

	\author{Agus L. Soenjaya}
	\address{School of Mathematics and Statistics, The University of New South Wales, Sydney 2052, Australia}
	\email{\textcolor[rgb]{0.00,0.00,0.84}{a.soenjaya@unsw.edu.au}}

	\keywords{Landau--Lifshitz--Bloch, spin diffusion, global strong solution, decoupled linear finite element, error analysis}
	\subjclass{65M12, 65M60, 35K59, 35Q60}

	\begin{abstract}
	The spin-diffusion Landau--Lifshitz--Bloch (SDLLB) system is a nonlinearly coupled system of quasilinear vector-valued PDEs which models the interaction between spin-polarised currents and magnetisation at high temperatures. The aim of this paper is twofold. Firstly, assuming the initial data is sufficiently small, we show the existence of a unique global strong solution to the SDLLB equation in a bounded domain $\Omega\subset \bb{R}^d$, where $d\leq 3$, thus ensuring well-posedness of the model. Secondly, we propose a decoupled linearised fully-discrete finite element scheme to solve the problem. Despite the strong nonlinearity of the system, the proposed scheme only requires the solution of two completely decoupled linear systems per time-step. Assuming adequate regularity of the exact solution and a certain time-step constraint, we rigorously show that the numerical scheme converges at an optimal rate. Several numerical experiments corroborate our theoretical results.
	\end{abstract}
	\maketitle

\section{Introduction}

Micromagnetics is a field of physics dealing with the prediction of magnetic behaviour at sub-micrometre length scales. More recently, it was found that magnetisation can be manipulated by spin-polarised currents, even in the absence of any external magnetic field. This gives birth to the field of spintronics. Modern applications of this theory include the development of magnetoresistive random access memory (MRAM) and heat-assisted magnetic recording (HAMR) devices. It is important to note that in various applications, the temperature inside the devices may exceed the Curie temperature of the material. This underlies the importance of a model which incorporates interactions between magnetisation and spin-polarised currents at high temperatures. One such model is given by the spin-diffusion Landau--Lifshitz--Bloch (SDLLB) system, first analysed in~\cite{GuoLi20}. In the absence of currents, one recovers the Landau--Lifshitz--Bloch (LLB) equation~\cite{ChuNowChaGar06, Gar97, Le16}.

Fix a bounded domain $\Omega\subset \bb{R}^d$ ($d=1,2,3$) with boundary $\partial\Omega$. Let $\bff{m}:[0,T]\times \Omega\to \bb{R}^3$ and $\bff{s}:[0,T]\times \Omega\to \bb{R}^3$ be the local magnetisation and the spin accumulation, respectively.
The dynamics of the local magnetisation coupled with the spin accumulation in the regime above the Curie temperature can be described by a coupled system of vector-valued quasilinear equations~\cite{GuoLi20}, which we refer to as the SDLLB system:
\begin{subequations}\label{equ:sdllb}
	\begin{alignat}{2}
		\label{equ:sdllb a}
		&\partial_t \bff{m}
		=
		-\gamma \bff{m}\times\bff{H}
		+
		\alpha \bff{H}
		-
		\gamma' \bff{m}\times \bff{s}
		\qquad && \text{for $(t,\bff{x})\in(0,T)\times \Omega$,}
		\\[1ex]
		\label{equ:sdllb b}
		&\bff{H}= \alpha' \Delta \bff{m} - \kappa\mu \bff{m} - \kappa |\bff{m}|^2 \bff{m},
		\qquad && \text{for $(t,\bff{x})\in(0,T)\times \Omega$,}
		\\[1ex]
		\label{equ:sdllb c}
		&\partial_t \bff{s}
		=
		-\nabla\cdot \bff{J}
		- \frac{D_0}{\tau_{\mathrm{sf}}} \bff{s}
		- \frac{D_0}{\tau_{\mathrm{J}}} \bff{s}\times \bff{m}
		\qquad && \text{for $(t,\bff{x})\in(0,T)\times \Omega$,}
		\\[1ex]
		\label{equ:sdllb d}
		&\bff{J}
		=
		\beta' \bff{m}\otimes \bff{j}
		-
		D_0 \nabla \bff{s}
		+
		\beta D_0 (\bff{m}\otimes\bff{m}) \nabla \bff{s}
		\qquad && \text{for $(t,\bff{x})\in(0,T)\times \Omega$,}
		\\[1ex]
		\label{equ:sdllb e}
		&\bff{m}(0,\bff{x})= \bff{m}_0(\bff{x}),\;
		\bff{s}(0,\bff{x})= \bff{s}_0(\bff{x})
		\qquad && \text{for } \bff{x}\in {\Omega},
		\\[1ex]
		\label{equ:sdllb g}
		&\partial_{\bff{n}} \bff{m}=\partial_{\bff{n}} \bff{s}=\bff{0}
		\qquad && \text{for } (t,\bff{x})\in (0,T) \times \partial \Omega.
	\end{alignat}
\end{subequations}
{A more detailed discussion about this model can be found in Section~\ref{sec:model}.} 
Here, $\bff{n}$ is the outward pointing normal vector to $\partial\Omega$ and $\gamma':= \gamma \delta$. The physical meanings of other coefficients in~\eqref{equ:sdllb} are as follows: $\alpha'$ is the exchange field intensity, $\kappa=(2\chi)^{-1}$ is a positive constant related to the longitudinal susceptibility $\chi$ of the material, $\mu$ is a constant related to the equilibrium magnetisation and temperature, $\tau_{\mathrm{sf}}$ is the spin flip relaxation time, $\tau_{\mathrm{J}}$ is the spin transfer torque characteristic time, and $\beta, \beta' \in (0,1)$ are spin polarisation parameters.  All numerical coefficients in~\eqref{equ:sdllb} are positive.

For physical reasons, we assume that the function $\bff{j}:=\bff{j}(t,\bff{x})$ is a prescribed current density vector field which is tangential to the boundary ($\bff{j}\cdot\bff{n}=0$ on $\partial\Omega$), while $D_0:=D_0(\bff{x})$ is a given diffusion scalar field which is bounded above and below by positive constants. 
As is commonly done in the literature, we only consider energy contribution from the exchange interactions~\cite{ChuNowChaGar06, Le16, Soe24}:
\begin{equation}\label{equ:energy}
	\mathcal{E}(\bff{m}):= \frac{\alpha'}{2} \int_{\Omega} \abs{\nabla\bff{m}(\bff{x})}^2
	+
	\frac{\kappa}{4} \int_{\Omega} \abs{\bff{m}(\bff{x})}^4
	+
	\kappa\mu \int_{\Omega} \abs{\bff{m}(\bff{x})}^2 \,\dx,
\end{equation}
leading to the effective field $\bff{H}$ in~\eqref{equ:sdllb b}. Lower-order contributions from the first-order anisotropy field and the applied field could be considered without difficulty, but is omitted here for simplicity. Contribution from the demagnetisation field poses no significant problem analytically, but may be a source of numerical bottleneck. We refer the reader to~\cite{DiJunPraSla23, LiMaDuChen22, YanCheHu21} for further details about these issues.

We note that the spin diffusion equation coupled with the Landau--Lifshitz--Gilbert equation (SDLLG) is proposed in~\cite{GarWan07, GarWan07b} as a model for micromagnetics in the presence of spin currents for temperatures much lower than the Curie temperature. Numerical methods to solve such problem, focusing on multi-layer system, are proposed in~\cite{AbeHrkPagPraRugSue14, GarWan07, RugAbeHrkSuePra16}, where convergence (along subsequence) to a weak solution is shown (see also~\cite{BarPro06, DiPfePraRug20, EWan00, Pro01}, among others, for numerical integrators to the LLG equation without spin diffusion). Analysis of the SDLLG problem focusing on various mathematical issues such as regularity, long-time behaviour, and optimal control are done in~\cite{AnMajProTra22, DiJunPraSla23, PuGuo10, PuWan20, Ron22}. More detail about the physical background can be found in~\cite{Rug16}. On the other hand, the SDLLB system is much less studied. While the issues of well-posedness and numerical approximation of the Landau--Lifshitz--Bloch equation (without spin diffusion) have been considered in~\cite{BenEssAyo24, Le16, LeSoeTra24, LiGuoLiuLiu21, Soe25a}, not much is known about the SDLLB system, especially for $d=3$. The existence of a smooth solution to~\eqref{equ:sdllb} (which is global for sufficiently small initial data) is shown in~\cite{GuoLi20} for $d\leq 2$. As far as we know, numerical integrator for~\eqref{equ:sdllb} has not been considered before.

One of the aims of this paper is to continue the study in~\cite{GuoLi20} for the case $d=3$. Specifically, under certain smallness assumption on the initial data, we show the existence and uniqueness of global strong solution to the SDLLB in a smooth and bounded domain. We further propose a fully-discrete numerical scheme based on the Galerkin finite element method in space and the linearised Euler method in time to approximate the solution. We remark that despite the nonlinear coupling in~\eqref{equ:sdllb}, the proposed scheme only requires solving two completely decoupled linear systems in each time-step, which is an advantage in micromagnetics simulation~\cite{SunCheDuWan23}. Differently from the analysis of numerical schemes for the LLG or the SDLLG equation where only convergence along subsequence without rate is shown~\cite{AbeHrkPagPraRugSue14}, here we proceed to show an error estimate for the approximation, since the solution to the SDLLB is expected to be regular for a smooth and sufficiently small initial data (or small $\beta$) in a regular bounded domain. Assuming adequate regularity of the exact solution, an optimal rate of convergence is shown, as corroborated by several numerical experiments.

In contrast with the SDLLG equation, the problem~\eqref{equ:sdllb} does not automatically admit a pointwise bound on the magnetisation magnitude $\abs{\bff{m}}$, which complicates the analysis. On the other hand, the SDLLB equation possesses a stronger damping term (given by $\alpha\bff{H}$) which can be exploited to show a better regularity for the solution, at least for sufficiently small initial data (or small spin polarisation parameters) and in a smooth domain. Note that a certain smallness assumption on $\bff{m}$ is expected for well-posedness, otherwise, at least formally, the last term in~\eqref{equ:sdllb d} will dominate, turning \eqref{equ:sdllb c} into a backward diffusion equation for which ill-posedness of the associated initial-value problem is known. 

The existence and uniqueness results for the SDLLB are established by proving several uniform a priori estimates for the solution, which require careful analysis due to the nonlinearities involved in the coupled system. The error analysis of the numerical integrator is performed by defining appropriate elliptic projections tailored to the problem at hand and analysing their properties. Here, we work with a single-layer magnetic domain $\Omega$ as in~\cite{DiJunPraSla23, GuoLi20, PuGuo10, PuWan20, Ron22}, which is physically relevant. We briefly remark that for a multi-layer domain, the magnetisation vector field $\bff{m}$ on the smaller domain needs to be extended by zero to the larger domain, potentially creating discontinuity in the coefficients of the drift-diffusion equation (cf.~\cite[Remark~3.1]{DiJunPraSla23}). This causes substantial difficulties in the theoretical and numerical analysis, and will be a subject for future research.

To summarise, the main contributions of this paper include:
\begin{enumerate}
	\item proving the existence and uniqueness of a global strong solution to the SDLLB system with small initial data (or small spin polarisation parameters) in spatial dimensions $d\leq 3$ (Theorem~\ref{the:exist}),
	\item proposing a linear, decoupled, fully-discrete finite element method for solving the SDLLB system (Algorithm~\ref{alg:scheme}),
	\item establishing an error estimate for the proposed numerical scheme under a suitable time-step restriction and assumptions on the norm of the initial data (Theorem~\ref{the:error}).
\end{enumerate}
The paper is organised as follows: 
\begin{itemize}
	\item Section~\ref{sec:prelim} discusses the model in more detail, and gathers the notations and auxiliary results used throughout the paper;
	\item Section~\ref{sec:exist} establishes the global well-posedness for the SDLLB system;
	\item Section~\ref{sec:fem} presents and analyses the fully decoupled finite element approximation of the SDLLB system;
	\item Section~\ref{sec:num exp} contains numerical experiments that verify the theoretical convergence rates of the proposed numerical scheme.
\end{itemize}

\section{Preliminaries}\label{sec:prelim}

\subsection{Notations}
We begin by defining some notations used in this paper. Let $\Omega\subset \bb{R}^d$ be a domain. The function space $\bb{L}^p := \bb{L}^p(\Omega; \bb{R}^3)$ denotes the space of $p$-th integrable functions taking values in $\bb{R}^3$ and $\bb{W}^{k,p} := \bb{W}^{k,p}(\Omega; \bb{R}^3)$ denotes the Sobolev space of 
functions on $\Omega$, taking values in $\bb{R}^3$. As usual, $\bb{H}^k(\Omega) := \bb{W}^{k,2}(\Omega)$. {The space $\widetilde{\bb{H}}^{-1}(\Omega)$ denotes the dual of $\bb{H}^1(\Omega)$, with the duality pairing defined as the extension of the $\bb{L}^2(\Omega)$ inner product.} For brevity, we write $\bb{L}^p$, $\bb{W}^{k,p}$, or $\bb{H}^k$ in lieu of $\bb{L}^p(\Omega)$, $\bb{W}^{k,p}(\Omega)$, or $\bb{H}^k(\Omega)$ respectively. Denote $\Omega_T:= (0,T)\times \Omega$. Let $\Delta$ be the Neumann Laplacian operator acting on $\bb{R}^3$-valued functions with domain
\begin{equation*}
	\bb{H}^2_{\bff{n}}:= \left\{\bff{v}\in \bb{H}^2 : \frac{\partial \bff{v}}{\partial \bff{n}} = \bff{0} \text{ on } \partial{\Omega} \right\}.
\end{equation*}

For a Banach space $\bb{X}$, the spaces $L^p(0,T;\bb{X})$ and $W^{k,p}(0,T;\bb{X})$ denote respectively the Lebesgue and Sobolev spaces of functions on $(0,T)$ taking values in $\bb{X}$. The space $C([0,T];\bb{X})$ denotes the space of continuous function on $[0,T]$ taking values in $\bb{X}$. For simplicity, we will write
\[ 
	L^p_T(\bb{X}) := L^p(0,T; \bb{X}) 
	\quad\text{and}\quad 
	W^{k,p}_T(\bb{X}) := W^{k,p}(0,T; \bb{X}).
\] 
We do not distinguish between the scalar product of $\bb{L}^2$ vector-valued functions taking values in $\bb{R}^3$ and the scalar product of $\bb{L}^2$ matrix-valued functions taking values in $\bb{R}^{3\times 3}$, and denote them both by $\inpro{\cdot}{\cdot}$. 

Finally, the constant $C$ in the estimate denotes a
generic constant which may take different values at different occurrences. If
the dependence of $C$ on a variable, e.g.~$T$, is emphasised, we will write
$C_T$. The notation $a\leqs b$ means $a\leq Cb$ for some constant $C$.

\subsection{The model}\label{sec:model}

We now provide a more detailed discussion of the SDLLB model under consideration.
{Recall that $\bff{m}$ and $\bff{s}$ represent the local magnetisation and the spin accumulation, respectively. The vector field $\bff{H}: [0,T]\times \Omega\to \bb{R}^3$ is the effective field, which is defined as the negative variational derivative of the micromagnetic energy $\mathcal{E}$, i.e.
	\begin{equation*}
		\bff{H}[\bff{m}]:= -\nabla_{\bff{m}} \mathcal{E}.
	\end{equation*} 
	The time evolution of the magnetisation vector field at elevated temperatures can be described by the LLB equation \cite{ChuNowChaGar06, Gar91}:
	\begin{align}\label{equ:llb general}
		\partial_t \bff{m}
		=
		-\gamma \bff{m}\times \bff{H} + \frac{\gamma \alpha_{\parallel}}{|\bff{m}|^2}\, (\bff{m}\cdot \bff{H})\bff{m} - \frac{\gamma\alpha_{\perp}}{|\bff{m}|^2}\, \bff{m} \times (\bff{m}\times \bff{H}),
	\end{align}
	where $\alpha_\parallel$ and $\alpha_\perp$ are, respectively, the longitudinal and transverse damping parameters, and $\gamma$ is the gyromagnetic ratio. The damping parameters $\alpha_\parallel$ and $\alpha_\perp$ are related to the temperature $\Theta$, the Curie temperature $T_\mathrm{c}$, and the Gilbert damping parameter $\alpha_G$ in the standard Landau--Lifshitz equation via
	\begin{align}\label{equ:alpha par and perp}
		\begin{cases}
			\alpha_{\parallel} = \frac{2\alpha_G \Theta}{3T_{\mathrm{c}}}, \quad
			\alpha_{\perp} = \alpha_G \left(1- \frac{\Theta}{3T_{\mathrm{c}}} \right), &\text{ if $\Theta<T_\mathrm{c}$},
			\\[1ex]
			\alpha_{\parallel} = \alpha_{\perp} = \frac{2\alpha_G}{3}, &\text{ if $\Theta\geq T_\mathrm{c}$}.
		\end{cases}
	\end{align}
	
	In contrast to the LLG equation, equation~\eqref{equ:alpha par and perp} does not preserve the magnitude of the magnetisation vector in general.
	However, note that at zero kelvin, $\alpha_\parallel= 0$ and $\alpha_\perp=\alpha_G$, thus equation \eqref{equ:llb general} reduces to
	\begin{align}\label{equ:ll special}
		\partial_t \bff{m}
		=
		-\gamma \bff{m}\times \bff{H} - \frac{\gamma\alpha_G}{|\bff{m}|^2}\, \bff{m} \times (\bff{m}\times \bff{H}).
	\end{align}
	Formally taking the inner product of \eqref{equ:ll special} with $\bff{m}$ shows that $\abs{\bff{m}}$ is a constant in this special case, reducing \eqref{equ:ll special} to the standard Landau--Lifshitz equation.
	In the high-temperature regime, noting that $\alpha_\parallel=\alpha_\perp= 2\alpha_G/3$ for $\Theta\geq T_\mathrm{c}$ from \eqref{equ:alpha par and perp}, by the vector triple product identity the LLB equation simplifies to
	\begin{align}\label{equ:llb above Curie}
		\partial_t \bff{m} = -\gamma \bff{m}\times \bff{H} + \alpha \bff{H},
	\end{align}
	where $\alpha:= 2\gamma \alpha_G/{3}$. This is the form of the LLB equation we are considering in this paper. A derivation of \eqref{equ:llb general} and \eqref{equ:llb above Curie} within the continuum thermodynamic framework can be found in~\cite{BerGio16}.
	
	In the presence of an electric current, the local magnetisation experiences a torque due to the spin accumulation $\bff{s}$, whose dynamics is described by a nonlinear drift–diffusion equation~\cite{ShpLevZha03, ZhaLevFer02}. Following~\cite{BooChuCha20, ZhaLevFer02, ZamJun16}, the coupling between the magnetisation and spin accumulation, with strength $\delta>0$, can be introduced by appending the term $\delta \bff{s}$ to the effective field $\bff{H}$ in the LLB magnetisation equation \eqref{equ:llb above Curie}. This coupling generates additional torques in \eqref{equ:llb above Curie}: a transverse (precessional) torque $-\gamma\delta \bff{m}\times \bff{s}$ and a longitudinal torque $\alpha\delta\bff{s}$.
	
	For analytical tractability, we consider a simplified model in the regime above the Curie temperature. In this regime, strong thermal agitation reduces the efficiency of a spin-polarised current in generating a net spin accumulation, resulting in a relatively small effective coupling $\delta$. Moreover, spin relaxation rates increase with temperature due to enhanced scattering, causing $\bff{s}$ to decay rapidly and remain small compared with other contributions to the effective field~\cite{BooChuCha20}. Since $\alpha_G \ll 1$~\cite{ZhaSonYan16} and $\delta$ is small, the longitudinal torque $\alpha\delta\bff{s}$ is therefore negligible relative to the damping term $\alpha \bff{H}$ or the precessional torque $\gamma\delta \bff{m}\times \bff{s}$, and it is dropped in our model. This leads to the SDLLB system~\eqref{equ:sdllb} studied here, which coincides with the model in~\cite{GuoLi20}.}

\subsection{Problem formulations}\label{sec:assum}

Let ${\Omega}\subset \bb{R}^d$ be a bounded smooth or a convex polytopal domain. {The following standing assumptions are used throughout the paper:}
\begin{enumerate}[(i)]
	\item the function $\bff{j}:\Omega_T \to \bb{R}^d$ is given such that $\bff{j}\in L^\infty(\Omega_T)$ and $\bff{j}\cdot \bff{n}=0$ on $\partial\Omega$;
	\item the function $D_0:\Omega \to\bb{R}$ is given such that $D_0$ is bounded above and below by positive constants, i.e. there exist positive constants $D_\ast$ and $D^\ast$ such that $D_\ast \leq D_0 \leq D^\ast$. 
\end{enumerate}
Further regularity assumptions will be introduced as necessary in the corresponding theorems. {In the analysis, we always assume $d=3$ for ease of presentation, noting that similar argument holds for $d\leq 2$.} We introduce the following notions of solution to \eqref{equ:sdllb}.

\begin{definition}[weak solution]\label{def:weak}
Let $\bff{m}_0\in \bb{H}^1$ and $\bff{s}_0\in \bb{H}^1$ be given. A \emph{weak solution} to \eqref{equ:sdllb} is a pair $(\bff{m},\bff{s})$, where $\bff{m}:\Omega_T\to \bb{R}^3$ and $\bff{s}:\Omega_T\to \bb{R}^3$ {satisfy}:
\begin{enumerate}[(i)]
	\item $\bff{m}\in H^1(0,T;\widetilde{\bb{H}}^{-1}) \cap C([0,T];\bb{L}^2) \cap L^\infty(0,T;\bb{L}^\infty) \cap L^2(0,T;\bb{H}^1)$;
	\item $\bff{s} \in H^1(0,T;\widetilde{\bb{H}}^{-1}) \cap C([0,T];\bb{L}^2) \cap L^2(0,T;\bb{H}^1)$;
	\item $\bff{m}(0)=\bff{m}_0$ and $\bff{s}(0)=\bff{s}_0$;
	\item for almost all $t\in (0,T)$,
	\begin{align}
		\label{equ:weak m}
		\inpro{\partial_t \bff{m}}{\bff{\phi}}
		&=
		\gamma\alpha' \inpro{\bff{m}\times \nabla \bff{m}}{\nabla\bff{\phi}}
		-
		\alpha\alpha' \inpro{\nabla\bff{m}}{\nabla\bff{\phi}}
		-
		\alpha\kappa\mu \inpro{\bff{m}}{\bff{\phi}}
		\nonumber\\
		&\quad
		-
		\alpha\kappa \inpro{|\bff{m}|^2 \bff{m}}{\bff{\phi}}
		-
		\gamma' \inpro{\bff{m}\times \bff{s}}{\bff{\phi}},
		\quad \forall \bff{\phi}\in \bb{H}^1,
		\\
		\label{equ:weak s}
		\inpro{\partial_t \bff{s}}{\bff{\psi}}
		&=
		\beta' \inpro{\bff{m}\otimes \bff{j}}{\nabla \bff{\psi}}
		- \inpro{D_0\nabla\bff{s}}{\nabla\bff{\psi}}
		+ \beta \inpro{D_0(\bff{m}\otimes \bff{m})\nabla \bff{s}}{\nabla\bff{\psi}}
		\nonumber\\
		&\quad
		-
		\frac{1}{\tau_{\mathrm{sf}}} \inpro{D_0\bff{s}}{\bff{\psi}}
		-
		\frac{1}{\tau_{\mathrm{J}}} \inpro{D_0\bff{s}\times \bff{m}}{\bff{\psi}}
		,
		\quad \forall \bff{\psi}\in \bb{H}^1;
	\end{align}
\end{enumerate}
A weak solution is local if the solution exists only for sufficiently small $T$. It is global if the solution exists for arbitrary positive $T$.
\end{definition}

\begin{definition}[strong solution]\label{def:strong}
A \emph{strong solution} $(\bff{m},\bff{s})$ to~\eqref{equ:sdllb} is a weak solution with additional regularity:
\begin{enumerate}[(i)]
	\item $\bff{m}\in H^1(0,T;\bb{H}^1) \cap C([0,T]; \bb{H}^2) \cap L^2(0,T;\bb{H}^3)$;
	\item $\bff{s}\in H^1(0,T;\bb{H}^1) \cap C([0,T];\bb{H}^2 ) \cap L^2(0,T;\bb{H}^3)$.
\end{enumerate}
In this case, $(\bff{m},\bff{s})$ satisfies \eqref{equ:sdllb} for almost every $(t,\bff{x})\in \Omega_T$.
\end{definition}

The above definition of weak solution is in the spirit of~\cite{AnMajProTra22, Le16}, noting that $\bff{j}\cdot \bff{n}=0$ on $\partial\Omega$ by assumption. 
Strong solution of the SDLLB system satisfies the following energy equality {for all $t\in (0,T)$}:
\begin{align}\label{equ:energy ineq}
	\mathcal{E}(\bff{m}(t))
	+
	\alpha \int_0^t \norm{\bff{H}(\tau)}{\bb{L}^2}^2 \dtau
	=
	\mathcal{E}(\bff{m}_0)
	+
	\int_0^t \gamma' \inpro{\bff{m}(\tau)\times \bff{s}(\tau)}{\bff{H}(\tau)} \dtau,
\end{align}
where $\mathcal{E}$ is the micromagnetic energy functional defined by~\eqref{equ:energy}.
The energy equality~\eqref{equ:energy ineq} can be motivated as follows: Formally taking the inner product of~\eqref{equ:sdllb a} with $\bff{H}$, and of \eqref{equ:sdllb b} with $\partial_t \bff{m}$, then subtracting the resulting equations, we obtain
\begin{align}\label{equ:energy arg}
	\frac{\alpha'}{2} \ddt \norm{\nabla\bff{m}}{\bb{L}^2}^2
	+
	\frac{\kappa}{4} \ddt \norm{\bff{m}}{\bb{L}^4}^4
	+
	\kappa\mu \ddt \norm{\bff{m}}{\bb{L}^2}^2
	+
	\alpha\norm{\bff{H}}{\bb{L}^2}^2
	=
	\gamma' \inpro{\bff{m}\times \bff{s}}{\bff{H}}.
\end{align}
Integrating this over $(0,t)$ then yields~\eqref{equ:energy ineq}.

%

For ease of presentation, in the analysis we set all numerical coefficients in~\eqref{equ:weak m} and \eqref{equ:weak s}, except for $\beta$, to 1.

\subsection{Finite elements}

Let $\big\{\mathcal{T}_h\big\}_{h>0}$ be a family of shape-regular and quasi-uniform triangulations of ${\Omega}$ into triangles or tetrahedra with maximal mesh-size $h$.
Next, we introduce the conforming finite element space $\bb{V}_h \subset \bb{H}^1$ given by
\begin{equation}\label{equ:Vh}
\bb{V}_h := \left\{\bff{\phi}_h \in \bff{C}(\overline{\Omega}; \bb{R}^3): \left.\bff{\phi}_h\right|_K \in \cal{P}_r(K;\bb{R}^3), \; \forall K \in \cal{T}_h \right\},
\end{equation}
where $\cal{P}_r(K; \bb{R}^3)$ denotes the space of polynomials of {degree} at most $r$ on $K$ taking values in $\bb{R}^3$.
For a technical reason (see Lemma~\ref{lem:stab elliptic}), we assume that in~\eqref{equ:Vh} the polynomial degree $r$ in $\bb{V}_h$ is:
\begin{equation}\label{equ:deg r}
	\begin{cases}
		r\geq 1, &\text{if } d\in \{1,2\},
		\\
		r\geq 2, &\text{if } d=3.
	\end{cases}
\end{equation}

Due to the regularity of the triangulation, we have the following best approximation property: for $p\in [1,\infty]$, there exists a constant $C$ independent of $h$ such that for any $\bff{v} \in \bb{W}^{r+1,p}$,
\begin{align}\label{equ:fin approx}
	\inf_{\chi \in {\bb{V}}_h} \big\{ \norm{\bff{v} - \bff{\chi}}{\bb{L}^p} 
	+ 
	h \norm{\nabla (\bff{v}-\bff{\chi})}{\bb{L}^p} 
	\big\} 
	\leq 
	C h^{r+1} \norm{\bff{v}}{\bb{W}^{r+1,p}}.
\end{align}
In the analysis, we will use the finite element projection operator $P_h: \bb{L}^2\to \bb{V}_h$ defined by
\begin{align}\label{equ:L2 proj}
	\inpro{P_h \bff{v}-\bff{v}}{\bff{\chi}}=0, \quad \forall \bff{\chi}\in \bb{V}_h.
\end{align}
Let $p\in [1,\infty]$. The projector $P_h$ satisfies the following boundedness and approximation properties~\cite{CroTho87, DouDupWah74}:
\begin{align}
	\label{equ:Ph Lp stab}
	\norm{P_h \bff{v}}{\bb{L}^p} &\leq C_p \norm{\bff{v}}{\bb{L}^p},
	\\
	\label{equ:Ph H1 stab}
	\norm{P_h \bff{v}}{\bb{W}^{1,p}} &\leq C_p \norm{\bff{v}}{\bb{W}^{1,p}},
	\\
	\label{equ:Ph approx}
	\norm{\bff{v} - P_h(\bff{v})}{\bb{L}^2}
	+
	h \norm{\nabla\big(\bff{v} - P_h(\bff{v})\big)}{\bb{L}^2}
	&\leq 
	C h^{r+1} \|\bff{v}\|_{\bb{H}^{r+1}}.
\end{align}
Other projection operators used in this paper will be introduced and analysed in Section~\ref{sec:fem}.

Finally, the following inverse estimates are well-known: there exist constants $C_{\textrm{inv}}$ and $\widetilde{C}_{\textrm{inv}}$ (which depend on the regularity of the triangulation, but is independent of $h$) such that
\begin{align}
	\label{equ:inverse}
	\norm{\bff{v}}{\bb{L}^\infty} &\leq C_{\textrm{inv}} h^{-d/2} \norm{\bff{v}}{\bb{L}^2}, \quad \forall \bff{v}\in \bb{V}_h,
	\\
	\label{equ:inverse infty H1}
	\norm{\bff{v}}{\bb{L}^\infty} &\leq \widetilde{C}_{\textrm{inv}} \ell_h \norm{\bff{v}}{\bb{H}^1}, \quad\forall \bff{v}\in \bb{V}_h,
\end{align}
where
\begin{equation}\label{equ:ell h}
	\ell_h:=
	\begin{cases}
		1, &\text{if } d=1,
		\\
		\abs{\log h}^{1/2}, &\text{if } d=2,
		\\
		h^{-1/2}, &\text{if }d=3.
	\end{cases}
\end{equation}
The inverse estimates hold under the quasi-uniformity assumption on the triangulation.

\section{The existence and uniqueness of global strong solution} \label{sec:exist}

{To establish the global existence and uniqueness of solutions to~\eqref{equ:sdllb}, one could apply the~Faedo--Galerkin method: derive suitable uniform estimates for the approximate solutions and apply weak compactness argument to extract a subsequence which solves the problem. However, in order to streamline the presentation, we will work directly (and formally) with smooth solution $(\bff{m},\bff{s})$ of problem~\eqref{equ:sdllb} in lieu of its Faedo--Galerkin approximations. The formal a priori estimates presented below can be rigorously justified by appealing to the Galerkin approximation framework, following the approach in~\cite{Le16, LeSoeTra24}. Uniqueness of strong solutions (in the sense of Definition~\ref{def:strong}) will be shown by an energy-based argument.} 
	
The main result of this section is stated in Theorem~\ref{the:exist}. For clarity of exposition, we recall that all numerical coefficients in \eqref{equ:sdllb} have been set to 1, except for $\beta$. We begin with a lemma that demonstrates the exponential decay of the $\bb{L}^p$ norm of the solution $\bff{m}$. This decay corresponds to the eventual loss of magnetisation above the Curie temperature (see also Remark~\ref{rem:phys}, and~\cite{AtxChuKaz07, KilFin12, LeSoeTra24} for supporting evidence).

\begin{lemma}
Let $(\bff{m},\bff{s})$ be a smooth solution of~\eqref{equ:sdllb} with initial data $\bff{m}_0\in \bb{L}^p$, where $p\in [2,\infty]$. Then for all $t\in [0,T]$,
\begin{align}\label{equ:u Lp}
	\norm{\bff{m}(t)}{\bb{L}^p} 
	&\leq 
	e^{-t} \norm{\bff{m}_0}{\bb{L}^p}.
\end{align}
Moreover, for $p\in [2,\infty)$,
\begin{align}\label{equ:u grad Lp}
	\int_0^t \norm{|\bff{m}(\tau)|^{\frac{p-2}{2}} |\nabla \bff{m}(\tau)|}{\bb{L}^2}^2 \dtau
	&\leq C,
\end{align}
where $C$ depends on the coefficients of the equation and $\norm{\bff{m}_0}{\bb{L}^p}$.
\end{lemma}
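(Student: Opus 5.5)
Test \eqref{equ:sdllb a} against $|\bff{m}|^{p-2}\bff{m}$; this is the natural $\bb{L}^p$ energy method. With all coefficients set to $1$, the equation reads
\[
\partial_t\bff{m}=-\bff{m}\times\bff{H}+\bff{H}-\bff{m}\times\bff{s},\qquad \bff{H}=\Delta\bff{m}-\bff{m}-|\bff{m}|^2\bff{m}.
\]
The point is that all three precessional terms vanish pointwise: $(\bff{m}\times\bff{H})\cdot|\bff{m}|^{p-2}\bff{m}=0$ and $(\bff{m}\times\bff{s})\cdot|\bff{m}|^{p-2}\bff{m}=0$. In particular the spin accumulation $\bff{s}$ drops out entirely.

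For finite $p\ge 2$ we then get
\[
\frac1p\ddt\norm{\bff{m}}{\bb{L}^p}^p=\inpro{\Delta\bff{m}}{|\bff{m}|^{p-2}\bff{m}}-\norm{\bff{m}}{\bb{L}^p}^p-\norm{\bff{m}}{\bb{L}^{p+2}}^{p+2}.
\]
For the Laplacian term, integrate by parts using the Neumann condition:
\[
-\inpro{\Delta\bff{m}}{|\bff{m}|^{p-2}\bff{m}}=\int|\bff{m}|^{p-2}|\nabla\bff{m}|^2+(p-2)\int|\bff{m}|^{p-2}\big|\nabla|\bff{m}|\big|^2\ge\int|\bff{m}|^{p-2}|\nabla\bff{m}|^2 .
\]
Here we use $\partial_i(|\bff{m}|^{p-2})\,\bff{m}=(p-2)|\bff{m}|^{p-3}\partial_i|\bff{m}|\,\bff{m}$ and $\bff{m}\cdot\partial_i\bff{m}=|\bff{m}|\partial_i|\bff{m}|$. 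Together these give
\[
\frac1p\ddt\norm{\bff{m}}{\bb{L}^p}^p+\norm{|\bff{m}|^{\frac{p-2}{2}}\nabla\bff{m}}{\bb{L}^2}^2+\norm{\bff{m}}{\bb{L}^p}^p\le0.
\]

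To get the decay, drop the gradient term. This leaves $\ddt\norm{\bff{m}}{\bb{L}^p}^p\le -p\norm{\bff{m}}{\bb{L}^p}^p$, so Gronwall gives $\norm{\bff{m}(t)}{\bb{L}^p}^p\le e^{-pt}\norm{\bff{m}_0}{\bb{L}^p}^p$. Taking $p$-th roots yields \eqref{equ:u Lp}.

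To get \eqref{equ:u grad Lp}, integrate the same inequality over $(0,t)$ instead. The result is $\int_0^t\norm{|\bff{m}|^{\frac{p-2}{2}}\nabla\bff{m}}{\bb{L}^2}^2\le \frac1p\norm{\bff{m}_0}{\bb{L}^p}^p$, which is uniform in $t$ once the coefficients are restored.

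The case $p=\infty$ is the main obstacle, since the $\bb{L}^p$ constants must not blow up as $p\to\infty$. They do not: the decay estimate carries no $p$-dependent constant. Since $\Omega$ is bounded and $\bff{m}_0\in\bb{L}^\infty\subset\bb{L}^p$ for every finite $p$, we can let $p\to\infty$ in \eqref{equ:u Lp}, using $\norm{\bff{v}}{\bb{L}^p}\to\norm{\bff{v}}{\bb{L}^\infty}$. An alternative is a maximum-principle argument for $|\bff{m}|^2$.

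The only other point requiring care is the integration by parts for $2<p<3$, where $|\bff{m}|^{p-2}$ is not smooth at zeros of $\bff{m}$. We handle it by replacing $|\bff{m}|^{p-2}$ with $(|\bff{m}|^2+\varepsilon)^{(p-2)/2}$ and letting $\varepsilon\to0$. For smooth solutions this is routine.
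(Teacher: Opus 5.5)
Your proposal is correct and follows essentially the same route as the paper: test \eqref{equ:sdllb a} with $|\bff{m}|^{p-2}\bff{m}$ so that the two cross-product terms, and hence $\bff{s}$, drop out; expand the Laplacian term into $\int|\bff{m}|^{p-2}|\nabla\bff{m}|^2+(p-2)\int|\bff{m}|^{p-2}|\nabla|\bff{m}||^2$; integrate in time for \eqref{equ:u grad Lp}; apply Gronwall for \eqref{equ:u Lp}; and let $p\to\infty$. The paper writes the second piece as $(p-2)\norm{|\bff{m}|^{\frac{p-4}{2}}|\bff{m}\cdot\nabla\bff{m}|}{\bb{L}^2}^2$, which is the same quantity, and it leaves implicit the $\varepsilon$-regularisation of $|\bff{m}|^{p-2}$ near zeros of $\bff{m}$ that you include.
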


\begin{proof}
Taking the inner product of \eqref{equ:sdllb a} with $|\bff{m}|^{p-2} \bff{m}$ for $p\geq 2$, and noting \eqref{equ:sdllb b}, we obtain
\begin{align*}
	\frac{1}{p} \ddt \norm{\bff{m}}{\bb{L}^p}^p
	+
	\inpro{\nabla \bff{m}}{\nabla (\abs{\bff{m}}^{p-2} \bff{m})}
	+
	\norm{\bff{m}}{\bb{L}^p}^p
	+
	\norm{\bff{m}}{\bb{L}^{p+2}}^{p+2}
	=
	0.
\end{align*}
Therefore,
\begin{align*}
	\frac{1}{p} \ddt \norm{\bff{m}}{\bb{L}^p}^p
	&+
	\norm{|\bff{m}|^{\frac{p-2}{2}} |\nabla \bff{m}|}{\bb{L}^2}^2
	+
	(p-2) \norm{|\bff{m}|^{\frac{p-4}{2}} |\bff{m}\cdot\nabla\bff{m}|}{\bb{L}^2}^2
	\\
	&+
	\norm{\bff{m}}{\bb{L}^p}^p
	+
	\norm{\bff{m}}{\bb{L}^{p+2}}^{p+2}
	=
	0.
\end{align*}
Integrating this over $(0,t)$ gives \eqref{equ:u grad Lp}. Furthermore, dropping irrelevant terms, we have
\[
\ddt \norm{\bff{m}(t)}{\bb{L}^p}^p + p \norm{\bff{m}(t)}{\bb{L}^p}^p
\leq 0,
\]
so that
\[
\ddt
\Big(
e^{p t} \norm{\bff{m}(t)}{\bb{L}^p}^p
\Big)
\le 0.
\]
Integrating over $(0,t)$ and taking $p$-th root {show}~\eqref{equ:u Lp} for $p\in [2,\infty)$. Letting $p\to \infty$ in~\eqref{equ:u Lp} completes the proof for the case $p=\infty$.
\end{proof}

In the next lemma, we assume a certain smallness condition on $\beta$ or $\norm{\bff{m}_0}{\bb{L}^\infty}$ to derive a uniform estimate for $\bff{s}$.

\begin{lemma}\label{lem:s L2}
Let $(\bff{m},\bff{s})$ be a smooth solution of \eqref{equ:sdllb} with initial data $\bff{m}_0\in \bb{L}^\infty$ and $\bff{s}_0\in \bb{L}^2$, such that $\beta D^\ast \norm{\bff{m}_0}{\bb{L}^\infty}^2 < D_\ast$. Then for all $t\in [0,T]$,
\begin{align}\label{equ:s L2}
	\norm{\bff{s}(t)}{\bb{L}^2}^2
	+
	\int_0^t \norm{\nabla \bff{s}(\tau)}{\bb{L}^2}^2 \dtau
	\leq
	C,
\end{align}
where the constant $C$ depends on the coefficients of the equation, $\norm{\bff{m}_0}{\bb{L}^\infty}$, and $\norm{\bff{s}_0}{\bb{L}^2}$. {Furthermore, we have}
\begin{align}\label{equ:limsup s zero}
	{\lim_{t\to\infty} \norm{\bff{s}(t)}{\bb{L}^2}^2 = 0.}
\end{align}
\end{lemma}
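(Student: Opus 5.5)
We test \eqref{equ:weak s} with $\bff{\psi}=\bff{s}$, with all coefficients except $\beta$ set to $1$ but keeping track of $D_0$. The cross-product term vanishes, since $(\bff{s}\times\bff{m})\cdot\bff{s}=0$. The identity becomes
\begin{align*}
\frac12\ddt\norm{\bff{s}}{\bb{L}^2}^2
+\inpro{D_0\nabla\bff{s}}{\nabla\bff{s}}
-\beta\inpro{D_0(\bff{m}\otimes\bff{m})\nabla\bff{s}}{\nabla\bff{s}}
+\inpro{D_0\bff{s}}{\bff{s}}
=\inpro{\bff{m}\otimes\bff{j}}{\nabla\bff{s}}.
\end{align*}

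\textbf{Coercivity.} The key point is pointwise coercivity of the diffusion matrix. We have $\big((\bff{m}\otimes\bff{m})\nabla\bff{s}\big):\nabla\bff{s}=\sum_i|\bff{m}\cdot\partial_i\bff{s}|^2\le|\bff{m}|^2|\nabla\bff{s}|^2$. By \eqref{equ:u Lp} with $p=\infty$, $\norm{\bff{m}(t)}{\bb{L}^\infty}\le \norm{\bff{m}_0}{\bb{L}^\infty}$ for all $t$. Hence
\[
\inpro{D_0\nabla\bff{s}}{\nabla\bff{s}}-\beta\inpro{D_0(\bff{m}\otimes\bff{m})\nabla\bff{s}}{\nabla\bff{s}}\ge \big(D_\ast-\beta D^\ast\norm{\bff{m}_0}{\bb{L}^\infty}^2\big)\norm{\nabla\bff{s}}{\bb{L}^2}^2=:\lambda\norm{\nabla\bff{s}}{\bb{L}^2}^2 ,
\]
with $\lambda>0$ by hypothesis. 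The zeroth-order term is bounded below by $D_\ast\norm{\bff{s}}{\bb{L}^2}^2$.

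\textbf{Source term.} We bound the right-hand side by Young's inequality:
\[
|\inpro{\bff{m}\otimes\bff{j}}{\nabla\bff{s}}|\le \norm{\bff{j}}{L^\infty(\Omega_T)}\norm{\bff{m}}{\bb{L}^2}\norm{\nabla\bff{s}}{\bb{L}^2}\le \frac{\lambda}{2}\norm{\nabla\bff{s}}{\bb{L}^2}^2+C\norm{\bff{m}}{\bb{L}^2}^2 .
\]
By \eqref{equ:u Lp} with $p=2$ and boundedness of $\Omega$, $\norm{\bff{m}(t)}{\bb{L}^2}^2\le e^{-2t}|\Omega|\norm{\bff{m}_0}{\bb{L}^\infty}^2$. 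This yields
\[
\ddt\norm{\bff{s}}{\bb{L}^2}^2+\lambda\norm{\nabla\bff{s}}{\bb{L}^2}^2+2D_\ast\norm{\bff{s}}{\bb{L}^2}^2\le Ce^{-2t}.
\]

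\textbf{Conclusions.} Integrating over $(0,t)$ gives \eqref{equ:s L2}, with a constant independent of $T$ since $\int_0^\infty e^{-2\tau}\,\dtau<\infty$. For \eqref{equ:limsup s zero}, drop the gradient term and apply Gronwall to $y=\norm{\bff{s}}{\bb{L}^2}^2$ with $y'+2D_\ast y\le Ce^{-2t}$:
\[
y(t)\le e^{-2D_\ast t}\norm{\bff{s}_0}{\bb{L}^2}^2+C\int_0^t e^{-2D_\ast(t-\tau)}e^{-2\tau}\,\dtau .
\]
The convolution integral tends to $0$ as $t\to\infty$, being a convolution of two decaying exponentials (it behaves like $te^{-2t}$ if $D_\ast=1$). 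Hence $\norm{\bff{s}(t)}{\bb{L}^2}\to0$.

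The only delicate step is the coercivity estimate, which needs the uniform-in-time $\bb{L}^\infty$ bound on $\bff{m}$. That bound is supplied by the preceding lemma, so no genuine obstacle is expected.
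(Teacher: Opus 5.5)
Your proposal is correct and follows essentially the same route as the paper. You test the spin equation with $\bff{s}$, absorb the $\beta$-term using $\norm{\bff{m}(t)}{\bb{L}^\infty}\le\norm{\bff{m}_0}{\bb{L}^\infty}$ and the hypothesis $\beta D^\ast\norm{\bff{m}_0}{\bb{L}^\infty}^2<D_\ast$, and bound the source term via the exponential decay of $\bff{m}$; you then integrate and apply Gronwall to $y'+2D_\ast y\le Ce^{-2t}$, and your separate treatment of the case $D_\ast=1$ (where the convolution behaves like $te^{-2t}$) is slightly cleaner than the paper's closed form, which divides by $2D_\ast-2$.
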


\begin{proof}
Taking the inner product of \eqref{equ:sdllb c} with $\bff{s}$, noting \eqref{equ:sdllb d} and our standing assumptions, gives
\begin{align*}
	&\frac12 \ddt \norm{\bff{s}}{\bb{L}^2}^2
	+
	\inpro{D_0 \nabla \bff{s}}{\nabla \bff{s}}
	+
	\inpro{D_0 \bff{s}}{\bff{s}}
	=
	\inpro{\bff{m}\otimes \bff{j}}{\nabla\bff{s}}
	+
	\beta \inpro{D_0(\bff{m}\otimes \bff{m}) \nabla \bff{s}}{\nabla\bff{s}}
\end{align*}
Noting the assumption on $D_0$, by Young's inequality we obtain for any $\epsilon>0$,
\begin{align}\label{equ:ineq s L2}
	&\frac12 \ddt \norm{\bff{s}}{\bb{L}^2}^2
	+
	D_\ast \norm{\nabla \bff{s}}{\bb{L}^2}^2
	+
	D_\ast \norm{\bff{s}}{\bb{L}^2}^2
	\nonumber\\
	&\leq
	C \norm{\bff{j}}{L^\infty(\Omega_T)}^2 \norm{\bff{m}}{\bb{L}^\infty}^2
	+
	\beta D^\ast \norm{\bff{m}}{\bb{L}^\infty}^2 \norm{\nabla \bff{s}}{\bb{L}^2}^2
	+
	\epsilon \norm{\nabla \bff{s}}{\bb{L}^2}^2
	\nonumber\\
	&\leq
	Ce^{-2t} \norm{\bff{m}_0}{\bb{L}^\infty}^2
	+ 
	\beta D^\ast e^{-2t} \norm{\bff{m}_0}{\bb{L}^\infty}^2 \norm{\nabla \bff{s}}{\bb{L}^2}^2
	+ 
	\epsilon \norm{\nabla \bff{s}}{\bb{L}^2}^2 ,
\end{align}
where in the last step we used the assumption on $\bff{j}$ and \eqref{equ:u Lp}. Now, if $\beta D^\ast \norm{\bff{m}_0}{\bb{L}^\infty}^2 < D_\ast$, then noting that $e^{-2t}\leq 1$ and choosing $\epsilon>0$ sufficiently small we can absorb the last two terms on the right-hand side. Integrating over $(0,t)$ then yields \eqref{equ:s L2}.

Furthermore, \eqref{equ:ineq s L2} and the assumption $\norm{\bff{m}_0}{\bb{L}^\infty}^2 < D_\ast/(\beta D^\ast)$ imply
\begin{align}\label{equ:asymp s}
	\ddt \norm{\bff{s}}{\bb{L}^2}^2 + 2D_\ast \norm{\bff{s}}{\bb{L}^2}^2 \leq
	\frac{Ce^{-2t} D_\ast}{\beta D^\ast}.
\end{align}
By a variant of the Gronwall inequality~\cite[Theorem 2.1.5]{Qin17}, we obtain
\begin{align*}
	\norm{\bff{s}(t)}{\bb{L}^2}^2
	&\leq
	\norm{\bff{s}_0}{\bb{L}^2}^2 e^{-2D_\ast t} + \int_0^t C e^{-2D_\ast (t-\tau)} \frac{e^{-2\tau} D_\ast}{\beta D^\ast} \,\dtau
	\\
	&=
	\norm{\bff{s}_0}{\bb{L}^2}^2 e^{-2D_\ast t} + \frac{CD_\ast}{\beta D^\ast (2D_\ast -2)} \left(e^{-2t}-e^{-2D_\ast t}\right).
\end{align*}
In particular, we have \eqref{equ:limsup s zero}. This completes the proof of the lemma.
\end{proof}

Further estimates on the norms of spatial derivatives of $\bff{m}$ are derived in the following lemmas.

\begin{lemma}\label{lem:nab m L2}
Let $(\bff{m},\bff{s})$ be a smooth solution of \eqref{equ:sdllb} with initial data $\bff{m}_0\in \bb{H}^1\cap \bb{L}^\infty$ and $\bff{s}_0\in \bb{L}^2$, such that $\beta D^\ast \norm{\bff{m}_0}{\bb{L}^\infty}^2 < D_\ast$. Then for all $t\in [0,T]$,
\begin{align}\label{equ:nab m L2}
	\norm{\nabla \bff{m}(t)}{\bb{L}^2}^2
	+
	\int_0^t \norm{\Delta \bff{m}(\tau)}{\bb{L}^2}^2 \dtau
	\leq
	C,
\end{align}
where the constant $C$ depends on the coefficients of the equation, $\norm{\bff{m}_0}{\bb{H}^1 \cap \bb{L}^\infty}$, and $\norm{\bff{s}_0}{\bb{L}^2}$.
\end{lemma}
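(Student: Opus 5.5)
We test \eqref{equ:sdllb a} against $-\Delta\bff{m}$, integrate by parts using the Neumann condition \eqref{equ:sdllb g}, and close with the $\bb{L}^\infty$ decay \eqref{equ:u Lp} and the spin bound \eqref{equ:s L2} from Lemma~\ref{lem:s L2}. With all coefficients equal to $1$, we have $\partial_t\bff{m} = -\bff{m}\times\Delta\bff{m} + \Delta\bff{m} - \bff{m} - |\bff{m}|^2\bff{m} - \bff{m}\times\bff{s}$. The terms $\bff{m}\times\bff{m}$ and $\bff{m}\times(|\bff{m}|^2\bff{m})$ vanish, and $\inpro{\bff{m}\times\Delta\bff{m}}{\Delta\bff{m}}=0$. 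Testing with $-\Delta\bff{m}$ therefore gives
\begin{align*}
	\frac12\ddt\norm{\nabla\bff{m}}{\bb{L}^2}^2 + \norm{\Delta\bff{m}}{\bb{L}^2}^2 + \norm{\nabla\bff{m}}{\bb{L}^2}^2 - \inpro{|\bff{m}|^2\bff{m}}{\Delta\bff{m}} = \inpro{\bff{m}\times\bff{s}}{\Delta\bff{m}}.
\end{align*}

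The quartic term is handled by integrating by parts:
\[
-\inpro{|\bff{m}|^2\bff{m}}{\Delta\bff{m}} = \norm{|\bff{m}|\,|\nabla\bff{m}|}{\bb{L}^2}^2 + 2\norm{\bff{m}\cdot\nabla\bff{m}}{\bb{L}^2}^2 \ge 0,
\]
so it can be dropped. If we prefer not to integrate by parts, Young's inequality together with $\norm{\bff{m}}{\bb{L}^6}^6\le \norm{\bff{m}_0}{\bb{L}^\infty}^4\norm{\bff{m}_0}{\bb{L}^2}^2e^{-6t}$ also suffices. For the coupling term, Young's inequality and \eqref{equ:u Lp} with $p=\infty$ give
\[
\abs{\inpro{\bff{m}\times\bff{s}}{\Delta\bff{m}}} \le \tfrac12\norm{\Delta\bff{m}}{\bb{L}^2}^2 + \tfrac12\norm{\bff{m}_0}{\bb{L}^\infty}^2e^{-2t}\norm{\bff{s}}{\bb{L}^2}^2.
\]
By Lemma~\ref{lem:s L2}, which is available because of the assumption $\beta D^\ast\norm{\bff{m}_0}{\bb{L}^\infty}^2<D_\ast$, the factor $\norm{\bff{s}(t)}{\bb{L}^2}^2$ is bounded uniformly in $t$. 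The right-hand side is therefore bounded by $Ce^{-2t}$, which is integrable on $(0,\infty)$.

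Absorbing $\tfrac12\norm{\Delta\bff{m}}{\bb{L}^2}^2$ into the left and integrating over $(0,t)$ yields
\[
\norm{\nabla\bff{m}(t)}{\bb{L}^2}^2 + \int_0^t\norm{\Delta\bff{m}}{\bb{L}^2}^2\,\dtau \le \norm{\nabla\bff{m}_0}{\bb{L}^2}^2 + C,
\]
with $C$ independent of $T$. This is \eqref{equ:nab m L2}.

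The only point needing care is the coupling term $\inpro{\bff{m}\times\bff{s}}{\Delta\bff{m}}$. Only $\norm{\bff{s}}{\bb{L}^2}$ is controlled uniformly in time, so this term must be placed on $\bff{m}$ in $\bb{L}^\infty$, not paired with $\nabla\bff{s}$ after an integration by parts. This is exactly why the hypothesis $\bff{m}_0\in\bb{L}^\infty$ and the smallness condition from Lemma~\ref{lem:s L2} are needed.
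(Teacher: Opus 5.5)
Your proof is correct and follows the paper's argument essentially line by line: test \eqref{equ:sdllb a} with $-\Delta\bff{m}$, keep the nonnegative quartic contributions $\norm{|\bff{m}|\,|\nabla\bff{m}|}{\bb{L}^2}^2+2\norm{\bff{m}\cdot\nabla\bff{m}}{\bb{L}^2}^2$ on the left, bound the coupling term by Young's inequality with $\bff{m}$ in $\bb{L}^\infty$, and integrate in time using \eqref{equ:u Lp} and \eqref{equ:s L2}. The only inaccuracy is in your closing remark: integrating the coupling term by parts would also work, since $\inpro{\bff{m}\times\bff{s}}{\Delta\bff{m}}=-\inpro{\bff{m}\times\nabla\bff{s}}{\nabla\bff{m}}$ (the contribution $(\nabla\bff{m}\times\bff{s})\cdot\nabla\bff{m}$ vanishes) and $\int_0^t\norm{\nabla\bff{s}(\tau)}{\bb{L}^2}^2\,\dtau$ is also controlled by \eqref{equ:s L2}, so that route is not excluded.
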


\begin{proof}
Taking the inner product of \eqref{equ:sdllb a} with $-\Delta \bff{m}$ and integrating by parts, we have
\begin{align*}
	&\frac12 \ddt \norm{\nabla \bff{m}}{\bb{L}^2}^2
	+
	\norm{\Delta \bff{m}}{\bb{L}^2}^2
	+
	\norm{\nabla \bff{m}}{\bb{L}^2}^2
	+
	\norm{|\bff{m}| |\nabla \bff{m}|}{\bb{L}^2}^2
	+
	2\norm{\bff{m} \cdot \nabla \bff{m}}{\bb{L}^2}^2
	\\
	&=
	-
	\inpro{\bff{m}\times \bff{s}}{\Delta \bff{m}}
	\\
	&\leq
	\frac12 \norm{\Delta \bff{m}}{\bb{L}^2}^2
	+
	\frac12 \norm{\bff{m}}{\bb{L}^\infty}^2 \norm{\bff{s}}{\bb{L}^2}^2.
\end{align*}
Integrating both sides over $(0,t)$, noting \eqref{equ:u Lp} and \eqref{equ:s L2}, we obtain the required result.
\end{proof}

\begin{remark}\label{rem:phys}
Physically, \eqref{equ:u Lp} and \eqref{equ:limsup s zero} indicate that above the Curie temperature, both the local magnetisation and the spin accumulation decay to zero in a certain sense. Under the LLB framework considered here, $\abs{\bff{m}}$ can vary in space and time since $\bff{m}$ represents the \emph{local magnetisation} (or macrospin) at high temperatures. Indeed, above the Curie temperature, $\bff{m}(t)$ even decays exponentially to zero. We emphasise that this does not mean that the individual \emph{magnetic spin} vectors vanish (they are still of unit length), but rather that they become sufficiently disordered so that the averaged (or local) magnetisation $\bff{m}$ at any point tends to zero, corresponding to the loss of macroscopic magnetisation above the Curie point~\cite{AtxChuKaz07, KilFin12}. The loss of magnetisation and the enhanced scattering weaken the coupling between $\bff{m}$ and $\bff{s}$ in such a way that spin relaxation mechanisms dominate over spin current injection, resulting in an eventual decay of $\bff{s}$ to zero as well~\cite{BooChuCha20}.
\end{remark}

\begin{remark}
We comment on the physical plausibility of the assumption $\beta D^\ast \norm{\bff{m}_0}{\bb{L}^\infty}^2 < D_\ast$ used in Lemmas~\ref{lem:s L2} and \ref{lem:nab m L2}, and later in Theorem~\ref{the:exist} to establish the existence of a global weak solution. In the SDLLG system, a similar smallness condition on $\beta$ is typically imposed. Notably, if $\norm{\bff{m}_0}{\bb{L}^\infty}=1$, then our assumption simplifies to $\beta<D_\ast/D^\ast$, which is consistent with the condition intended in~\cite{AbeHrkPagPraRugSue14}. We note that the statement of~\cite[Lemma~5]{AbeHrkPagPraRugSue14} contains a minor inaccuracy, but once corrected, the resulting constraint for parabolicity reduces precisely to $\beta<D_\ast/D^\ast$ in our notation. If moreover $D_0$ is constant in space, then this assumption reduces to $\beta\in (0,1)$. 
Equivalently, for a given $\beta\in (0,1)$, the assumption holds if $\norm{\bff{m}_0}{\bb{L}^\infty}^2< D_\ast/(\beta D^\ast)$, which can occur naturally in a sufficiently `weak' ferromagnet (small local magnetisation almost everywhere) or at a sufficiently high temperature.
\end{remark}

\begin{lemma}\label{lem:Delta m L2}
Let $(\bff{m},\bff{s})$ be a smooth solution of \eqref{equ:sdllb} with initial data $\bff{m}_0\in \bb{H}^2$ and $\bff{s}_0\in \bb{L}^2$, such that 
\begin{align}\label{equ:small m0 CG}
	\norm{\bff{m}_0}{\bb{L}^\infty}^2 < 
	\min \big\{1/C_{\mathrm{G}}^2, D_\ast/(\beta D^\ast)\big\} =: K_0, 
\end{align}
where $C_\mathrm{G}:=C_1 C_2$ is a constant associated with the Gagliardo--Nirenberg inequalities \eqref{equ:C1} and \eqref{equ:C2}. Then for all $t\in [0,T]$,
\begin{align}\label{equ:Delta m L2}
		\norm{\Delta \bff{m}(t)}{\bb{L}^2}^2
		+
		\int_0^t \norm{\nabla\Delta \bff{m}(\tau)}{\bb{L}^2}^2 \dtau
		\leq
		C,
\end{align}
where the constant $C$ depends on the coefficients of the equation, $\norm{\bff{m}_0}{\bb{H}^2}$, and $\norm{\bff{s}_0}{\bb{L}^2}$.
\end{lemma}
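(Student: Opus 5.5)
We test \eqref{equ:sdllb a} against $\Delta^2\bff{m}$; equivalently, we take $\nabla$ of the equation and pair it with $\nabla\Delta\bff{m}$. The aim is an inequality of the form
\[
\frac12\ddt\norm{\Delta\bff{m}}{\bb{L}^2}^2+\norm{\nabla\Delta\bff{m}}{\bb{L}^2}^2\le(\text{terms absorbable or integrable in time}).
\]
With all coefficients equal to $1$, we have $\bff{H}=\Delta\bff{m}-\bff{m}-|\bff{m}|^2\bff{m}$. The damping part $\alpha\bff{H}$ produces $-\norm{\nabla\Delta\bff{m}}{\bb{L}^2}^2$ and $-\norm{\Delta\bff{m}}{\bb{L}^2}^2$. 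The Neumann condition is used in the integration by parts, since $\partial_{\bff n}\Delta\bff m=0$ formally follows from the equation. The remaining terms to estimate are the following.
\begin{itemize}
\item The cubic term $\inpro{\nabla(|\bff{m}|^2\bff{m})}{\nabla\Delta\bff{m}}$.
\item The precession term $\inpro{\nabla(\bff{m}\times\Delta\bff{m})}{\nabla\Delta\bff{m}}=\inpro{\nabla\bff{m}\times\Delta\bff{m}}{\nabla\Delta\bff{m}}$. Here the piece $\bff m\times\nabla\Delta\bff m$ vanishes against $\nabla\Delta\bff m$.
\item The cross terms from $\bff{m}\times(\bff{m}+|\bff{m}|^2\bff{m})=0$, which vanish identically.
\item The spin term $\inpro{\nabla(\bff{m}\times\bff{s})}{\nabla\Delta\bff{m}}$.
\end{itemize}

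The cubic term is bounded by $\norm{\bff m}{\bb L^\infty}^2\norm{\nabla\bff m}{\bb L^2}\norm{\nabla\Delta\bff m}{\bb L^2}$. By Young's inequality, together with \eqref{equ:u Lp} and Lemma~\ref{lem:nab m L2}, this is $\le\epsilon\norm{\nabla\Delta\bff m}{\bb L^2}^2+C$. For the spin term, $\norm{\nabla\bff m\times\bff s+\bff m\times\nabla\bff s}{\bb L^2}\le\norm{\nabla\bff m}{\bb L^3}\norm{\bff s}{\bb L^6}+\norm{\bff m}{\bb L^\infty}\norm{\nabla\bff s}{\bb L^2}$. After Young's inequality the second piece is integrable in time by \eqref{equ:s L2}. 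For the first piece, we use $\norm{\bff s}{\bb L^6}\lesssim\norm{\bff s}{\bb H^1}$ and interpolate $\norm{\nabla\bff m}{\bb L^3}$ between $\bb H^1$ and $\bb H^2$. This gives a bound $C\norm{\bff s}{\bb H^1}^2(1+\norm{\Delta\bff m}{\bb L^2}^2)$, which is a Gronwall coefficient integrable on $(0,t)$ by \eqref{equ:s L2}.

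The main obstacle is the quasilinear precession term $\inpro{\nabla\bff m\times\Delta\bff m}{\nabla\Delta\bff m}$. It is cubic in high derivatives, and in $d=3$ a naive estimate gives a superlinear Gronwall inequality, which yields only local bounds. This is where the smallness \eqref{equ:small m0 CG} is needed. We bound the term by $\norm{\nabla\bff m}{\bb L^\infty}\norm{\Delta\bff m}{\bb L^2}\norm{\nabla\Delta\bff m}{\bb L^2}$, or by $\norm{\nabla\bff m}{\bb L^4}\norm{\Delta\bff m}{\bb L^4}\norm{\nabla\Delta\bff m}{\bb L^2}$. We then apply Gagliardo--Nirenberg inequalities with $\bff m$ in $\bb L^\infty$ as the base norm. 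The first, labelled \eqref{equ:C1}, is of the form $\norm{\nabla\bff m}{\bb L^4}\le C_1\norm{\bff m}{\bb L^\infty}^{1/2}\norm{\Delta\bff m}{\bb L^2}^{1/2}$. The second, \eqref{equ:C2}, is of the form $\norm{\Delta\bff m}{\bb L^4}\le C_2\norm{\bff m}{\bb L^\infty}^{1/2}\norm{\nabla\Delta\bff m}{\bb L^2}^{1/2}$, modulo lower-order terms. We aim to choose them so that the product is exactly $C_1C_2\norm{\bff m}{\bb L^\infty}\norm{\nabla\Delta\bff m}{\bb L^2}^2$ up to lower-order contributions. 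If scaling does not let both interpolations close at precisely these exponents, we would first try placing part of the weight on the $\bb H^1$ bound from Lemma~\ref{lem:nab m L2} and adjusting which factor carries the $\norm{\bff m}{\bb L^\infty}$ power. Since $\norm{\bff m(t)}{\bb L^\infty}\le e^{-t}\norm{\bff m_0}{\bb L^\infty}<1/C_{\mathrm G}$, this term is at most $\theta\norm{\nabla\Delta\bff m}{\bb L^2}^2$ with $\theta<1$. It can therefore be absorbed into the dissipation, leaving room for the $\epsilon$'s. The remaining lower-order Gagliardo--Nirenberg contributions are handled by Young's inequality and Lemma~\ref{lem:nab m L2}.

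Collecting terms, we obtain
\[
\ddt\norm{\Delta\bff m}{\bb L^2}^2+c\norm{\nabla\Delta\bff m}{\bb L^2}^2\le C(1+\norm{\bff s}{\bb H^1}^2)\norm{\Delta\bff m}{\bb L^2}^2+C(1+\norm{\bff s}{\bb H^1}^2).
\]
Since $\int_0^t\norm{\bff s}{\bb H^1}^2\,\dtau\le C$ uniformly in $t$ by \eqref{equ:s L2}, and this requires the second part of $K_0$, Gronwall's inequality gives \eqref{equ:Delta m L2} with a constant independent of $T$.
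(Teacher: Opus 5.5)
The overall plan matches the paper's: you test with $\Delta^2\bff m$, obtain the same four terms, and absorb the precession term using $\norm{\bff m(t)}{\bb L^\infty}\le\norm{\bff m_0}{\bb L^\infty}<1/C_{\mathrm G}$. Your handling of the cubic and spin terms is fine. Your $\bb L^3$--$\bb L^6$ bound for $\nabla\bff m\times\bff s$, with Gronwall weight $\norm{\bff s}{\bb H^1}^2$, is a legitimate alternative to the paper's Agmon bound, which needs only $\norm{\bff s}{\bb L^2}$.

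The gap is in the step that carries the lemma: the bound for $J_1=\inpro{\nabla\bff m\times\Delta\bff m}{\nabla\Delta\bff m}$. Your second interpolation inequality, $\norm{\Delta\bff m}{\bb L^4}\le C_2\norm{\bff m}{\bb L^\infty}^{1/2}\norm{\nabla\Delta\bff m}{\bb L^2}^{1/2}$ up to lower order, is false. For a unit-amplitude bump concentrated at scale $\varepsilon$, the left side is of order $\varepsilon^{-5/4}$, while the right side, even with the full $\bb H^3$ norm, is of order $\varepsilon^{-3/4}$. Even granting both of your inequalities, the product is $C_1C_2\norm{\bff m}{\bb L^\infty}\norm{\Delta\bff m}{\bb L^2}^{1/2}\norm{\nabla\Delta\bff m}{\bb L^2}^{3/2}$, not $C_1C_2\norm{\bff m}{\bb L^\infty}\norm{\nabla\Delta\bff m}{\bb L^2}^{2}$. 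Such a bound is inconsistent with scaling: $J_1$ scales like $\lambda^3$ under $\bff m\mapsto\bff m(\lambda\,\cdot)$, the right side like $\lambda^{5/2}$. It would also close by Young's inequality without any smallness.

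Your fallback does not repair this. Consider bounds $\norm{\nabla\bff m}{\bb L^p}\lesssim\norm{\bff m}{\bb L^\infty}^{1-a}\norm{\bff m}{\bb H^3}^{a}$ and $\norm{\Delta\bff m}{\bb L^q}\lesssim\norm{\bff m}{\bb L^\infty}^{1-b}\norm{\bff m}{\bb H^3}^{b}$ in $d=3$. Scaling forces $1/p=1/3-a/2$ and $1/q=2/3-b/2$. High-frequency oscillations force the admissibility conditions $a\ge 1/3$ and $b\ge 2/3$. The H\"older constraint $1/p+1/q=1/2$ then gives $a+b=1$, hence $a=1/3$, $b=2/3$, $p=6$, $q=3$.

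So the $\bb L^4$--$\bb L^4$ split would need the inadmissible value $a=1/6$. Weakening the base norm only pushes the total $\bb H^3$ exponent above $1$.

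Putting weight on the $\bb H^1$ bound of Lemma~\ref{lem:nab m L2} fails as well. In $d=3$, $\norm{\nabla\bff m}{\bb L^2}$ is supercritical: it scales like $\lambda^{-1/2}$, whereas $J_1$ and the dissipation $\norm{\nabla\Delta\bff m}{\bb L^2}^2$ both scale like $\lambda^{3}$. Any interpolation through it therefore leaves, after Young's inequality, a superlinear remainder such as $\norm{\Delta\bff m}{\bb L^2}^{6}$ (which is what Agmon gives), that is, only a local-in-time bound under the stated hypotheses.

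The missing ingredient is exactly the pair \eqref{equ:C1}--\eqref{equ:C2}: $\norm{\nabla\bff m}{\bb L^6}\le C_1\norm{\bff m}{\bb L^\infty}^{2/3}\norm{\bff m}{\bb H^3}^{1/3}$ and $\norm{\Delta\bff m}{\bb L^3}\le C_2\norm{\bff m}{\bb L^\infty}^{1/3}\norm{\bff m}{\bb H^3}^{2/3}$. These give $|J_1|\le C_{\mathrm G}\norm{\bff m}{\bb L^\infty}\norm{\bff m}{\bb H^3}\norm{\nabla\Delta\bff m}{\bb L^2}$. After bounding $\norm{\bff m}{\bb H^3}$ by $\norm{\nabla\Delta\bff m}{\bb L^2}$ plus lower-order terms, the hypothesis $C_{\mathrm G}\norm{\bff m_0}{\bb L^\infty}<1$ lets you absorb $J_1$ into the dissipation, as the paper does.
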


\begin{proof}
Taking the inner product of \eqref{equ:sdllb a} with $\Delta^2 \bff{m}$ and integrating by parts as necessary, we have
\begin{align}\label{equ:J1 J4}
	&\frac12 \ddt \norm{\Delta \bff{m}}{\bb{L}^2}^2
	+
	\norm{\nabla\Delta \bff{m}}{\bb{L}^2}^2
	+
	\norm{\Delta \bff{m}}{\bb{L}^2}^2
	\nonumber\\
	&=
	\inpro{\nabla \bff{m}\times \Delta \bff{m}}{\nabla\Delta \bff{m}}
	+
	\inpro{\nabla(|\bff{m}|^2 \bff{m})}{\nabla\Delta \bff{m}}
	+
	\inpro{\nabla\bff{m}\times \bff{s}}{\nabla\Delta \bff{m}}
	+
	\inpro{\bff{m}\times \nabla\bff{s}}{\nabla\Delta \bff{m}}
	\nonumber\\
	&=: J_1+J_2+J_3+J_4.
\end{align}
We will estimate each term on the last line. Firstly, note that by the Gagliardo--Nirenberg inequalities in 3D,
\begin{align}\label{equ:C1}
	\norm{\nabla \bff{m}}{\bb{L}^6}
	&\leq
	C_1 \norm{\bff{m}}{\bb{L}^\infty}^{\frac23} \norm{\bff{m}}{\bb{H}^3}^{\frac13}
	\\
	\label{equ:C2}
	\norm{\Delta \bff{m}}{\bb{L}^3}
	&\leq
	C_2 \norm{\bff{m}}{\bb{L}^\infty}^{\frac13} \norm{\bff{m}}{\bb{H}^3}^{\frac23},
\end{align}
where $C_1$ and $C_2$ are constants depending on $\Omega$. Therefore, we infer that
\begin{align*}
	\abs{J_1}
	&\leq
	\norm{\nabla \bff{m}}{\bb{L}^6} \norm{\Delta \bff{m}}{\bb{L}^3} \norm{\nabla \Delta \bff{m}}{\bb{L}^2}
	\\
	&\leq
	C_\mathrm{G} \norm{\bff{m}}{\bb{L}^\infty} \norm{\bff{m}}{\bb{H}^3} \norm{\nabla\Delta \bff{m}}{\bb{L}^2}
	\\
	&\leq
	C_\mathrm{G} \norm{\bff{m}_0}{\bb{L}^\infty} \left(1+ \norm{\Delta \bff{m}}{\bb{L}^2}^2 \right)
	+
	\epsilon \norm{\nabla\Delta \bff{m}}{\bb{L}^2}^2
	+
	C_\mathrm{G} \norm{\bff{m}_0}{\bb{L}^\infty} \norm{\nabla\Delta \bff{m}}{\bb{L}^2}^2,
\end{align*}
for any $\epsilon>0$, where in the last step we used Young's inequality, \eqref{equ:u Lp}, and \eqref{equ:nab m L2}. For the term $J_2$, by Young's inequality we have
\begin{align*}
	\abs{J_2} 
	&\leq
	3 \norm{\bff{m}}{\bb{L}^\infty}^2 \norm{\nabla \bff{m}}{\bb{L}^2} \norm{\nabla\Delta \bff{m}}{\bb{L}^2}
	\leq
	C\norm{\bff{m}_0}{\bb{L}^\infty}^2 \norm{\nabla\bff{m}}{\bb{L}^2}^2
	+
	\epsilon \norm{\nabla\Delta \bff{m}}{\bb{L}^2}^2.
\end{align*}
Next, for the term $J_3$ we apply Young's and Agmon's inequalities to obtain
\begin{align*}
	\abs{J_3}
	&\leq
	\norm{\nabla \bff{m}}{\bb{L}^\infty} \norm{\bff{s}}{\bb{L}^2} \norm{\nabla\Delta \bff{m}}{\bb{L}^2}
	\\
	&\leq
	C \norm{\nabla \bff{m}}{\bb{H}^1}^{\frac12} \norm{\nabla\bff{m}}{\bb{H}^2}^{\frac12}  \norm{\bff{s}}{\bb{L}^2} \norm{\nabla\Delta \bff{m}}{\bb{L}^2}
	\\
	&\leq
	C\left(1+\norm{\Delta \bff{m}}{\bb{L}^2}^2\right) 
	+
	\epsilon \norm{\nabla\Delta \bff{m}}{\bb{L}^2}^2,
\end{align*}
where in the last step we used \eqref{equ:s L2} and \eqref{equ:nab m L2}. Similarly, for the term $J_4$ we have
\begin{align*}
	\abs{J_4}
	&\leq
	\norm{\bff{m}}{\bb{L}^\infty} \norm{\nabla \bff{s}}{\bb{L}^2} \norm{\nabla\Delta \bff{m}}{\bb{L}^2}
	\leq
	\norm{\bff{m}_0}{\bb{L}^\infty}^2 \norm{\nabla \bff{s}}{\bb{L}^2}^2
	+
	\epsilon \norm{\nabla\Delta \bff{m}}{\bb{L}^2}^2.
\end{align*}
Altogether, continuing from \eqref{equ:J1 J4} we obtain
\begin{align*}
	&\frac12 \ddt \norm{\Delta \bff{m}}{\bb{L}^2}^2
	+
	\norm{\nabla\Delta \bff{m}}{\bb{L}^2}^2
	+
	\norm{\Delta \bff{m}}{\bb{L}^2}^2
	\\
	&\leq
	C\left(1+ \norm{\Delta \bff{m}}{\bb{L}^2}^2 + \norm{\nabla \bff{s}}{\bb{L}^2}^2 \right)
	+
	C_\mathrm{G} \norm{\bff{m}_0}{\bb{L}^\infty} \norm{\nabla\Delta \bff{m}}{\bb{L}^2}^2
	+
	\epsilon \norm{\nabla\Delta \bff{m}}{\bb{L}^2}^2.
\end{align*}
Choosing $\epsilon>0$ sufficiently small and noting the assumption \eqref{equ:small m0 CG}, we can absorb the last two terms on the right-hand side above. Integrating with respect to time, noting \eqref{equ:s L2} and \eqref{equ:nab m L2} again, then yields \eqref{equ:Delta m L2}.
\end{proof}

Next, we derive estimates on the norms of spatial derivatives of $\bff{s}$ under further smoothness assumptions on $\bff{j}$ and $D_0$.

\begin{lemma}
Let $(\bff{m},\bff{s})$ be a smooth solution of \eqref{equ:sdllb} with initial data $\bff{m}_0\in \bb{H}^2$ and $\bff{s}_0\in \bb{H}^1$, such that \eqref{equ:small m0 CG} holds. Let $\bff{j}\in L^\infty\big(0,T;\bb{H}^1(\Omega;\bb{R}^d)\big)$ and $D_0\in W^{1,\infty}(\Omega)$ be given. Then for all $t\in [0,T]$,
\begin{align}\label{equ:nab s L2}
	\norm{\nabla \bff{s}(t)}{\bb{L}^2}^2
	+
	\int_0^t \norm{\Delta \bff{s}(\tau)}{\bb{L}^2}^2 \dtau
	\leq
	C,
\end{align}
where the constant $C$ depends on the coefficients of the equation, $\norm{\bff{m}_0}{\bb{H}^2}$, and $\norm{\bff{s}_0}{\bb{H}^1}$.
\end{lemma}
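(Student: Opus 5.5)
Following the pattern of Lemma~\ref{lem:nab m L2}, I will take the inner product of \eqref{equ:sdllb c} with $-\Delta\bff{s}$ and integrate by parts. With the coefficients normalised this gives
\begin{align*}
	\frac12\ddt\norm{\nabla\bff{s}}{\bb{L}^2}^2
	+\inpro{\nabla\cdot(D_0\nabla\bff{s})}{\Delta\bff{s}}
	+\inpro{\nabla(D_0\bff{s})}{\nabla\bff{s}}
	=
	\inpro{\nabla\cdot(\bff{m}\otimes\bff{j})}{\Delta\bff{s}}
	+\beta\inpro{\nabla\cdot(D_0(\bff{m}\otimes\bff{m})\nabla\bff{s})}{\Delta\bff{s}}
	-\inpro{D_0\bff{s}\times\bff{m}}{\Delta\bff{s}}.
\end{align*}
I then expand the divergences. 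The principal parts are $D_0\Delta\bff{s}$ on the left and $\beta D_0(\bff{m}\otimes\bff{m})\Delta\bff{s}$ on the right. Together they give the coercive quantity $\inpro{D_0(I-\beta\,\bff{m}\otimes\bff{m})\Delta\bff{s}}{\Delta\bff{s}}$, which is bounded below by $(D_\ast-\beta D^\ast\norm{\bff{m}_0}{\bb{L}^\infty}^2)\norm{\Delta\bff{s}}{\bb{L}^2}^2$ using \eqref{equ:u Lp}. This constant is positive by \eqref{equ:small m0 CG}, and it is the exact analogue of the mechanism used in Lemma~\ref{lem:s L2}.

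The remaining terms are lower order, and I will bound each by $\epsilon\norm{\Delta\bff{s}}{\bb{L}^2}^2$ plus integrable quantities.
\begin{enumerate}[(i)]
	\item The term $\nabla D_0\cdot\nabla\bff{s}$ is bounded by $\norm{D_0}{W^{1,\infty}}\norm{\nabla\bff{s}}{\bb{L}^2}\norm{\Delta\bff{s}}{\bb{L}^2}$.
	\item For $\nabla\cdot(\bff{m}\otimes\bff{j})=(\bff{j}\cdot\nabla)\bff{m}+\bff{m}(\nabla\cdot\bff{j})$, I use $\norm{\bff{j}}{\bb{L}^\infty}\norm{\nabla\bff{m}}{\bb{L}^2}+\norm{\bff{m}}{\bb{L}^\infty}\norm{\nabla\bff{j}}{\bb{L}^2}\le C$, by the assumptions on $\bff{j}$, \eqref{equ:u Lp} and \eqref{equ:nab m L2}.
	\item The terms with $\nabla D_0$ or $\nabla\bff{m}$ hitting $(\bff{m}\otimes\bff{m})\nabla\bff{s}$ are the hardest. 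They are bounded by $C\norm{\bff{m}}{\bb{L}^\infty}\norm{\nabla\bff{m}}{\bb{L}^6}\norm{\nabla\bff{s}}{\bb{L}^3}\norm{\Delta\bff{s}}{\bb{L}^2}$, plus a simpler $\nabla D_0$ term.
	\item The zeroth-order term $\inpro{D_0\bff{s}\times\bff{m}}{\Delta\bff{s}}$ is trivially controlled by \eqref{equ:s L2}.
\end{enumerate}

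Item (iii) is the main obstacle. I will use $\norm{\nabla\bff{m}}{\bb{L}^6}\le C\norm{\bff{m}}{\bb{H}^2}\le C$ by \eqref{equ:Delta m L2} and elliptic regularity. I will also use the Gagliardo--Nirenberg bound $\norm{\nabla\bff{s}}{\bb{L}^3}\le C\norm{\nabla\bff{s}}{\bb{L}^2}^{1/2}\norm{\bff{s}}{\bb{H}^2}^{1/2}$, together with the Neumann elliptic estimate $\norm{\bff{s}}{\bb{H}^2}\le C(\norm{\Delta\bff{s}}{\bb{L}^2}+\norm{\bff{s}}{\bb{L}^2})$. Young's inequality then bounds (iii) by $\epsilon\norm{\Delta\bff{s}}{\bb{L}^2}^2+C(1+\norm{\nabla\bff{s}}{\bb{L}^2}^2)$.

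Collecting everything yields
\[
\ddt\norm{\nabla\bff{s}}{\bb{L}^2}^2+c_0\norm{\Delta\bff{s}}{\bb{L}^2}^2\le C\big(1+\norm{\nabla\bff{s}}{\bb{L}^2}^2\big),
\]
with $c_0>0$. Integrating in time and using $\int_0^t\norm{\nabla\bff{s}}{\bb{L}^2}^2\,\dtau\le C$ from \eqref{equ:s L2} gives \eqref{equ:nab s L2} directly, without an exponential Gronwall factor. The constant therefore stays uniform in $T$, as needed for the global result.

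The one technical point to check is the boundary terms in the integrations by parts. They vanish because $\partial_{\bff{n}}\bff{s}=\bff{0}$ and $\bff{j}\cdot\bff{n}=0$, which also gives $\bff{J}\bff{n}=\bff{0}$. This is exactly what allows pairing the divergence form with $\Delta\bff{s}$.
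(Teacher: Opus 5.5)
Your proposal is correct and is essentially the paper's proof. Like the paper, you test \eqref{equ:sdllb c} with $-\Delta\bff{s}$, absorb the principal part of the $\beta$-term into $\inpro{D_0\Delta\bff{s}}{\Delta\bff{s}}$ via $\beta D^\ast\norm{\bff{m}_0}{\bb{L}^\infty}^2<D_\ast$, and bound the lower-order terms by H\"older, Gagliardo--Nirenberg and Young; your $\bb{L}^\infty\cdot\bb{L}^2$ and $\bb{L}^6\cdot\bb{L}^3$ pairings in place of the paper's $\bb{L}^4\cdot\bb{L}^4$ ones make no difference. You then integrate in time using $\int_0^t\norm{\nabla\bff{s}}{\bb{L}^2}^2\,\dtau\le C$ from \eqref{equ:s L2} rather than Gronwall, as the paper does.

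The only thing to drop is your closing claim that the constant is uniform in $T$. The constant term in $C\big(1+\norm{\nabla\bff{s}}{\bb{L}^2}^2\big)$, coming for example from your bound in (ii), integrates to $Ct$. The paper's bound on the $\nabla\cdot(\bff{m}\otimes\bff{j})$ term does the same. This is harmless, since global existence only requires a bound on each finite interval $[0,T]$.
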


\begin{proof}
Taking the inner product of \eqref{equ:sdllb c} with $-\Delta \bff{s}$, we obtain
\begin{align*}
	\frac12 \ddt \norm{\nabla\bff{s}}{\bb{L}^2}^2
	&=
	\inpro{\nabla \cdot(\bff{m}\otimes \bff{j})}{\Delta \bff{s}}
	-
	\inpro{\nabla\cdot (D_0 \nabla \bff{s})}{\Delta \bff{s}}
	-
	\beta \inpro{\nabla \cdot \big(D_0 (\bff{m}\otimes \bff{m}) \nabla\bff{s}\big)}{\Delta \bff{s}}
	\\
	&\quad
	+
	\inpro{D_0 \bff{s}}{\Delta \bff{s}}
	+
	\inpro{D_0 \bff{s}\times\bff{m}}{\Delta \bff{s}}
	\\
	&=
	\inpro{\nabla \cdot(\bff{m}\otimes \bff{j})}{\Delta \bff{s}}
	-
	\inpro{D_0 \Delta \bff{s}}{\Delta\bff{s}}
	-
	\inpro{\nabla \bff{s} \cdot \nabla D_0}{\Delta \bff{s}}
	\\
	&\quad
	-
	\inpro{\nabla \cdot \big(D_0 (\bff{m}\otimes \bff{m}) \nabla\bff{s}\big)}{\Delta \bff{s}}
	+
	\inpro{D_0 \bff{s}}{\Delta \bff{s}}
	+
	\inpro{D_0 \bff{s}\times\bff{m}}{\Delta \bff{s}}.
\end{align*}
After rearranging the terms, we have
\begin{align}\label{equ:I1 to I5}
	\frac12 \ddt \norm{\nabla\bff{s}}{\bb{L}^2}^2
	+
	\inpro{D_0 \Delta \bff{s}}{\Delta\bff{s}}
	&=
	\inpro{\nabla \cdot(\bff{m}\otimes \bff{j})}{\Delta \bff{s}}
	-
	\inpro{\nabla \bff{s} \cdot \nabla D_0}{\Delta \bff{s}}
	+
	\inpro{D_0 \bff{s}}{\Delta \bff{s}}
	\nonumber\\
	&\quad
	-
	\beta\inpro{\nabla \cdot \big(D_0 (\bff{m}\otimes \bff{m}) \nabla\bff{s}\big)}{\Delta \bff{s}}
	+
	\inpro{D_0 \bff{s}\times\bff{m}}{\Delta \bff{s}}
	\nonumber\\
	&=: I_1+I_2+\cdots+I_5.
\end{align}
We need to estimate each term on the last line. The first term can be estimated in a straightforward manner by Young's inequality:
\begin{align*}
	\abs{I_1}
	&\leq
	\norm{\nabla \bff{m}}{\bb{L}^4} \norm{\bff{j}}{\bb{L}^4} \norm{\Delta \bff{s}}{\bb{L}^2}
	+
	\norm{\bff{m}}{\bb{L}^\infty} \norm{\nabla\bff{j}}{\bb{L}^2} \norm{\Delta \bff{s}}{\bb{L}^2}
	\\
	&\leq
	C+ \epsilon \norm{\Delta \bff{s}}{\bb{L}^2}^2,
\end{align*}
where we used the assumption on $\bff{j}$, the Sobolev embedding $\bb{H}^1\hookrightarrow \bb{L}^4$, \eqref{equ:Delta m L2}, and \eqref{equ:u Lp}. Similarly, by the assumption on $D_0$ and Young's inequality, we have
\begin{align*}
	\abs{I_2} 
	&\leq
	\norm{\nabla\bff{s}}{\bb{L}^2} \norm{\nabla D_0}{\bb{L}^\infty} \norm{\Delta \bff{s}}{\bb{L}^2}
	\leq
	C\norm{\nabla\bff{s}}{\bb{L}^2}^2 
	+
	\epsilon \norm{\Delta \bff{s}}{\bb{L}^2}^2,
	\\
	\abs{I_3}
	&\leq
	\norm{D_0}{L^\infty} \norm{\bff{s}}{\bb{L}^2} \norm{\Delta \bff{s}}{\bb{L}^2}
	\leq
	C\norm{\nabla\bff{s}}{\bb{L}^2}^2 
	+
	\epsilon \norm{\Delta \bff{s}}{\bb{L}^2}^2.
\end{align*}
For the term $I_4$, we expand the divergence term then apply H\"older's inequality to obtain
\begin{align*}
	\abs{I_4}
	&\leq
	\beta \norm{D_0}{W^{1,\infty}} \norm{\bff{m}}{\bb{L}^\infty}^2 \norm{\nabla \bff{s}}{\bb{L}^2} \norm{\Delta \bff{s}}{\bb{L}^2}
	\\
	&\quad
	+
	2\beta \norm{D_0}{L^\infty} \norm{\bff{m}}{\bb{L}^\infty} \norm{\nabla\bff{m}}{\bb{L}^4} \norm{\nabla \bff{s}}{\bb{L}^4} \norm{\Delta \bff{s}}{\bb{L}^2}
	\\
	&\quad
	+
	\beta \norm{D_0}{L^\infty} \norm{\bff{m}}{\bb{L}^\infty}^2 \norm{\Delta \bff{s}}{\bb{L}^2}^2
	\\
	&\leq
	C\norm{\nabla \bff{s}}{\bb{L}^2}^2 + \epsilon \norm{\Delta \bff{s}}{\bb{L}^2}^2
	+
	\beta D^\ast \norm{\bff{m}_0}{\bb{L}^\infty}^2 \norm{\Delta \bff{s}}{\bb{L}^2}^2,
\end{align*}
where in the last step we used the Gagliardo--Nirenberg inequalities, \eqref{equ:u Lp}, \eqref{equ:nab m L2}, and \eqref{equ:Delta m L2}. Finally, for the term $I_5$,
\begin{align*}
	\abs{I_5}
	&\leq
	\norm{D_0}{L^\infty} \norm{\bff{s}}{\bb{L}^2} \norm{\bff{m}}{\bb{L}^\infty} \norm{\Delta \bff{s}}{\bb{L}^2}
	\leq
	C\norm{\bff{s}}{\bb{L}^2}^2 + \epsilon \norm{\Delta \bff{s}}{\bb{L}^2}^2.
\end{align*}
Collecting the above estimates and continuing from \eqref{equ:I1 to I5}, we infer that for any $\epsilon>0$,
\begin{align*}
	\frac12 \ddt \norm{\nabla\bff{s}}{\bb{L}^2}^2
	+
	D_\ast \norm{\Delta \bff{s}}{\bb{L}^2}^2
	\leq
	C\norm{\nabla\bff{s}}{\bb{L}^2}^2
	+
	\epsilon \norm{\Delta \bff{s}}{\bb{L}^2}^2
	+
	\beta D^\ast \norm{\bff{m}_0}{\bb{L}^\infty}^2 \norm{\Delta \bff{s}}{\bb{L}^2}^2.
\end{align*}
If the quantity $\norm{\bff{m}_0}{\bb{L}^\infty}^2$ is small such that \eqref{equ:small m0 CG} holds, then we can absorb the last term on the right-hand side of the above inequality. Choosing $\epsilon>0$ sufficiently small and integrating over $(0,t)$, we obtain \eqref{equ:nab s L2}.
\end{proof}

\begin{lemma}\label{lem:Delta s L2}
Let $(\bff{m},\bff{s})$ be a smooth solution of \eqref{equ:sdllb} with initial data $\bff{m}_0\in \bb{H}^2$ and $\bff{s}_0\in \bb{H}^2$, such that \eqref{equ:small m0 CG} holds. Let $\bff{j}\in L^\infty\big(0,T;\bb{H}^2(\Omega;\bb{R}^d)\big)$ and $D_0\in W^{2,\infty}(\Omega)$ be given. Then for all $t\in [0,T]$,
\begin{align}\label{equ:Delta s L2}
	\norm{\Delta \bff{s}(t)}{\bb{L}^2}^2
	+
	\int_0^t \norm{\nabla \Delta \bff{s}(\tau)}{\bb{L}^2}^2 \dtau
	\leq
	C,
\end{align}
where the constant $C$ depends on the coefficients of the equation, $\norm{\bff{m}_0}{\bb{H}^2}$, and $\norm{\bff{s}_0}{\bb{H}^2}$.
\end{lemma}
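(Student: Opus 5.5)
We follow the pattern of Lemma~\ref{lem:Delta m L2}: test \eqref{equ:sdllb c} with $\Delta^2\bff{s}$ and integrate by parts once. Since $\partial_{\bff{n}}\bff{s}=\bff{0}$, we have $\inpro{\partial_t\bff{s}}{\Delta^2\bff{s}}=\frac12\ddt\norm{\Delta\bff{s}}{\bb{L}^2}^2$. Writing $-\nabla\cdot\bff{J}=\nabla\cdot(D_0\nabla\bff{s})-\nabla\cdot(\bff{m}\otimes\bff{j})-\beta\nabla\cdot\big(D_0(\bff{m}\otimes\bff{m})\nabla\bff{s}\big)$, we move one derivative onto each right-hand side term, testing against $\nabla\Delta\bff{s}$. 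The leading diffusion term then gives $-\inpro{D_0\nabla\Delta\bff{s}}{\nabla\Delta\bff{s}}\le -D_\ast\norm{\nabla\Delta\bff{s}}{\bb{L}^2}^2$. The lower-order pieces $\nabla D_0\cdot\nabla\bff{s}$ and $\nabla^2 D_0\,\nabla\bff{s}$ are controlled using $D_0\in W^{2,\infty}$ together with \eqref{equ:nab s L2}. Boundary terms vanish under the Neumann conditions, and if one worries about the boundary term with $\Delta\bff{s}$, we can instead differentiate the equation and test with $-\nabla\Delta\bff{s}$ formally.

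The remaining terms are handled as follows.
\begin{enumerate}[(a)]
\item The source term $\nabla\nabla\cdot(\bff{m}\otimes\bff{j})$ involves $\nabla^2\bff{m}\,\bff{j}$, $\nabla\bff{m}\nabla\bff{j}$ and $\bff{m}\nabla^2\bff{j}$. These are bounded in $\bb{L}^2$ by $\norm{\bff{m}}{\bb{H}^2}\norm{\bff{j}}{\bb{H}^2}$ via $\bb{H}^2\hookrightarrow\bb{L}^\infty$, $\bb{H}^1\hookrightarrow \bb{L}^4$, and \eqref{equ:Delta m L2}. This contributes $C+\epsilon\norm{\nabla\Delta\bff{s}}{\bb{L}^2}^2$.
\item The relaxation term gives $\norm{\nabla \bff{s}}{\bb{L}^2}\norm{\nabla\Delta\bff{s}}{\bb{L}^2}$.
\item The term $\nabla(D_0\bff{s}\times\bff{m})$ is bounded by $\norm{\bff{m}}{\bb{L}^\infty}\norm{\nabla\bff{s}}{\bb{L}^2}+\norm{\bff{s}}{\bb{L}^4}\norm{\nabla\bff{m}}{\bb{L}^4}$, which is controlled by the previous lemmas.
\end{enumerate}

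The main obstacle is the quasilinear term $\beta\nabla\big[\nabla\cdot(D_0(\bff{m}\otimes\bff{m})\nabla\bff{s})\big]$. Its top-order part is $\beta D_0(\bff{m}\otimes\bff{m})\nabla\Delta\bff{s}$ (up to commuting derivatives). This is bounded by $\beta D^\ast\norm{\bff{m}}{\bb{L}^\infty}^2\norm{\nabla\Delta\bff{s}}{\bb{L}^2}^2\le\beta D^\ast\norm{\bff{m}_0}{\bb{L}^\infty}^2\norm{\nabla\Delta\bff{s}}{\bb{L}^2}^2$ using \eqref{equ:u Lp}, and it is absorbed because of \eqref{equ:small m0 CG}, exactly as in the previous lemma. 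The lower-order parts are treated one at a time:
\begin{enumerate}[(a)]
\item $\bff{m}\nabla\bff{m}\nabla^2\bff{s}$: bounded by $\norm{\bff{m}}{\bb{L}^\infty}\norm{\nabla\bff{m}}{\bb{L}^6}\norm{\nabla^2\bff{s}}{\bb{L}^3}$. Using $\norm{\nabla^2\bff{s}}{\bb{L}^3}\leqs\norm{\bff{s}}{\bb{H}^2}^{1/2}\norm{\bff{s}}{\bb{H}^3}^{1/2}$ and Young's inequality, this gives $C\norm{\Delta\bff{s}}{\bb{L}^2}^2+\epsilon\norm{\nabla\Delta \bff{s}}{\bb{L}^2}^2$.
\item $(\nabla\bff{m})^2\nabla\bff{s}$ and $\bff{m}\nabla^2\bff{m}\nabla\bff{s}$: we use $\norm{\nabla\bff{s}}{\bb{L}^\infty}\leqs\norm{\bff{s}}{\bb{H}^2}^{1/2}\norm{\bff{s}}{\bb{H}^3}^{1/2}$ (Agmon) together with $\bff{m}\in L^\infty(\bb{H}^2)$. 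Here $\nabla^2\bff{m}\in\bb{L}^2$ is controlled via elliptic regularity $\norm{\bff{m}}{\bb{H}^2}\leqs\norm{\Delta\bff{m}}{\bb{L}^2}+\norm{\bff{m}}{\bb{L}^2}$. This gives the same type of bound.
\item Terms carrying $\nabla D_0$ or $\nabla^2D_0$: these are handled with the $W^{2,\infty}$ bound on $D_0$.
\end{enumerate}
Throughout, we use $\norm{\bff{s}}{\bb{H}^3}\leqs\norm{\nabla\Delta\bff{s}}{\bb{L}^2}+\norm{\bff{s}}{\bb{H}^1}$ for Neumann data.

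Collecting the estimates yields
\[
\frac12\ddt\norm{\Delta\bff{s}}{\bb{L}^2}^2+\big(D_\ast-\beta D^\ast\norm{\bff{m}_0}{\bb{L}^\infty}^2-\epsilon\big)\norm{\nabla\Delta\bff{s}}{\bb{L}^2}^2\le C\big(1+\norm{\Delta\bff{s}}{\bb{L}^2}^2\big).
\]
Because $\Delta\bff{s}\in L^2(0,T;\bb{L}^2)$ by \eqref{equ:nab s L2}, we do not need Gronwall to grow with time. Integrating directly over $(0,t)$ gives \eqref{equ:Delta s L2} with a constant depending on the data.
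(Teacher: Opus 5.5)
Your proposal is correct and follows the paper's route: test \eqref{equ:sdllb c} with $\Delta^2\bff{s}$, move one derivative onto each flux term so that everything is paired with $\nabla\Delta\bff{s}$, and bound the pieces using \eqref{equ:u Lp}, \eqref{equ:Delta m L2}, \eqref{equ:s L2} and \eqref{equ:nab s L2}. The differences are refinements rather than a new method: you explicitly isolate the anti-diffusive top-order term $\beta\inpro{D_0(\bff{m}\otimes\bff{m})\nabla\Delta\bff{s}}{\nabla\Delta\bff{s}}\le \beta D^\ast\norm{\bff{m}_0}{\bb{L}^\infty}^2\norm{\nabla\Delta\bff{s}}{\bb{L}^2}^2$ and absorb it via \eqref{equ:small m0 CG}, which the paper's displayed bounds for $J_6,J_7$ never show (so your treatment is the more complete one); and by splitting $\bff{m}\,\nabla\bff{m}\,\nabla^2\bff{s}$ as $\bb{L}^\infty\cdot\bb{L}^6\cdot\bb{L}^3$ instead of using $\norm{\nabla\bff{m}}{\bb{L}^\infty}$, you obtain a constant-coefficient inequality that integrates directly (giving a bound that grows linearly in $t$) in place of the paper's Gronwall step with weight $1+\norm{\nabla\bff{m}}{\bb{L}^\infty}^2$.
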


\begin{proof}
Taking the inner product of \eqref{equ:sdllb c} with $\Delta^2 \bff{s}$ and integrating by parts as necessary, we have
\begin{align*}
	\frac12 \ddt \norm{\Delta \bff{s}}{\bb{L}^2}^2
	&=
	-\inpro{\nabla\cdot (\bff{m}\otimes \bff{j})}{\Delta^2 \bff{s}}
	+
	\inpro{\nabla \cdot (D_0\nabla \bff{s})}{\Delta^2 \bff{s}}
	\\
	&\quad
	-
	\inpro{D_0 \bff{s}}{\Delta^2 \bff{s}}
	-
	\inpro{D_0 \bff{s}\times \bff{m}}{\Delta^2 \bff{s}}
	-
	\beta \inpro{\nabla \cdot \big(D_0(\bff{m}\otimes\bff{m}) \nabla \bff{s}\big)}{\Delta^2 \bff{s}}
	\\
	&=
	\inpro{\nabla \big(\nabla\cdot(\bff{m}\otimes\bff{j})\big)}{\nabla\Delta \bff{s}}
	-
	\inpro{\nabla D_0 \otimes \Delta \bff{s}}{\nabla\Delta \bff{s}}
	-
	\inpro{D_0 \nabla\Delta \bff{s}}{\nabla\Delta \bff{s}}
	\\
	&\quad
	-
	\inpro{\nabla\big(\nabla\bff{s} \cdot \nabla D_0\big)}{\nabla\Delta \bff{s}}
	+
	\inpro{\nabla (D_0\bff{s})}{\nabla\Delta \bff{s}}
	+
	\inpro{\nabla (D_0 \bff{s}\times \bff{m})}{\nabla\Delta \bff{s}}
	\\
	&\quad
	+
	\beta \inpro{\nabla \big(D_0 \nabla\cdot (\bff{m}\otimes \bff{m}) \nabla \bff{s}\big)}{\nabla\Delta \bff{s}}
	+
	\beta \inpro{\nabla \big( (\bff{m}\otimes \bff{m} \cdot \nabla D_0 ) \nabla \bff{s}\big)}{\nabla\Delta \bff{s}}.
\end{align*}
Upon rearranging the terms, we obtain
\begin{align}\label{equ:J1 to J7}
	&\frac12 \ddt \norm{\Delta \bff{s}}{\bb{L}^2}^2
	+
	\inpro{D_0 \nabla\Delta \bff{s}}{\nabla\Delta \bff{s}}
	\nonumber\\
	&=
	\inpro{\nabla \big(\nabla\cdot(\bff{m}\otimes\bff{j})\big)}{\nabla\Delta \bff{s}}
	-
	\inpro{\nabla D_0 \otimes \Delta \bff{s}}{\nabla\Delta \bff{s}}
	-
	\inpro{\nabla\big(\nabla\bff{s} \cdot \nabla D_0\big)}{\nabla\Delta \bff{s}}
	\nonumber\\
	&\quad
	+
	\inpro{\nabla (D_0\bff{s})}{\nabla\Delta \bff{s}}
	+
	\inpro{\nabla (D_0 \bff{s}\times \bff{m})}{\nabla\Delta \bff{s}}
	\nonumber\\
	&\quad
	+
	\beta \inpro{\nabla \big(D_0 \nabla\cdot (\bff{m}\otimes \bff{m}) \nabla \bff{s}\big)}{\nabla\Delta \bff{s}}
	+
	\beta \inpro{\nabla \big( (\bff{m}\otimes \bff{m} \cdot \nabla D_0 ) \nabla \bff{s}\big)}{\nabla\Delta \bff{s}}
	\nonumber\\
	&=: J_1+J_2+\cdots+ J_7.
\end{align}
We now estimate each term on the last line. For the first term, we immediately have
\begin{align*}
	\abs{J_1}
	&\leq
	\norm{\bff{m}}{\bb{H}^2} \norm{\bff{j}}{\bb{H}^2} \norm{\nabla \Delta \bff{s}}{\bb{L}^2}
	\leq
	C + \epsilon \norm{\nabla \Delta \bff{s}}{\bb{L}^2}^2,
\end{align*}
where in the last step we used Young's inequality and \eqref{equ:Delta m L2}. For the terms $J_2$ and $J_3$, straightforward application of Young's inequality and the Leibniz rule gives
\begin{align*}
	\abs{J_2}
	&\leq
	\norm{\nabla D_0}{\bb{L}^\infty} \norm{\Delta \bff{s}}{\bb{L}^2} \norm{\nabla\Delta \bff{s}}{\bb{L}^2}
	\leq
	C \norm{\Delta \bff{s}}{\bb{L}^2}^2 + \epsilon \norm{\nabla \Delta \bff{s}}{\bb{L}^2}^2,
\end{align*}
and
\begin{align*} 
	\abs{J_3}
	&\leq
	\norm{\nabla \bff{s}\cdot \nabla D_0}{\bb{H}^1} \norm{\nabla\Delta \bff{s}}{\bb{L}^2}
	\\
	&\leq
	\norm{\nabla \bff{s}}{\bb{H}^1} \norm{D_0}{W^{1,\infty}} \norm{\nabla\Delta \bff{s}}{\bb{L}^2}
	+
	\norm{\nabla \bff{s}}{\bb{L}^2} \norm{D_0}{W^{2,\infty}} 
	\norm{\nabla\Delta \bff{s}}{\bb{L}^2}
	\\
	&\leq
	C \norm{\Delta \bff{s}}{\bb{L}^2}^2 + \epsilon \norm{\nabla \Delta \bff{s}}{\bb{L}^2}^2.
\end{align*}
For the terms $J_4$ and $J_5$, similar argument gives
\begin{align*}
	\abs{J_4}
	&\leq
	\left(\norm{D_0}{L^\infty} \norm{\nabla \bff{s}}{\bb{L}^2}
	+
	\norm{D_0}{W^{1,\infty}} \norm{\bff{s}}{\bb{L}^2} \right) \norm{\nabla\Delta \bff{s}}{\bb{L}^2}
	\\
	&\leq
	C\norm{\bff{s}}{\bb{H}^1}^2 + \epsilon \norm{\nabla\Delta \bff{s}}{\bb{L}^2}^2,
	\\
	\abs{J_5}
	&\leq
	\norm{D_0}{W^{1,\infty}} \left( \norm{\bff{s}}{\bb{H}^1} \norm{\bff{m}}{\bb{L}^\infty} + \norm{\bff{s}}{\bb{L}^4} \norm{\nabla \bff{m}}{\bb{L}^4} \right)
	\norm{\nabla\Delta \bff{s}}{\bb{L}^2}
		\\
	&\leq
	C\norm{\bff{s}}{\bb{H}^1}^2 + \epsilon \norm{\nabla\Delta \bff{s}}{\bb{L}^2}^2,
\end{align*}
where in the last step we also used the Sobolev embedding, \eqref{equ:u Lp}, and \eqref{equ:Delta m L2}. Similarly for the term $J_6$, by the Leibniz rule and H\"older's inequality we have
\begin{align*}
	\abs{J_6}
	&\leq
	\beta \norm{D_0}{W^{1,\infty}} \left( \norm{\bff{m}}{\bb{H}^2}^2 \norm{\nabla \bff{s}}{\bb{L}^\infty} + 2 \norm{\bff{m}}{\bb{L}^\infty} \norm{\nabla \bff{m}}{\bb{L}^\infty} \norm{\bff{s}}{\bb{H}^2} \right) 
	\norm{\nabla\Delta \bff{s}}{\bb{L}^2}
	\\
	&\leq
	C \Big( \norm{\nabla \bff{s}}{\bb{H}^1}^{\frac12} \norm{\nabla \bff{s}}{\bb{H}^2}^{\frac12} + \norm{\nabla\bff{m}}{\bb{L}^\infty} \norm{\bff{s}}{\bb{H}^2} \Big) \norm{\nabla \Delta \bff{s}}{\bb{L}^2}
	\\
	&\leq
	C\left(1+ \norm{\nabla \bff{m}}{\bb{L}^\infty}^2 \right) \left(1+ \norm{\Delta \bff{s}}{\bb{L}^2}^2 \right)
	+
	\epsilon \norm{\nabla\Delta \bff{s}}{\bb{L}^2}^2,
\end{align*}
where in the penultimate step we used Agmon's inequality, \eqref{equ:u Lp}, and \eqref{equ:Delta m L2}, while in the last step we used elliptic regularity~\cite{Gri11}, Young's inequality, \eqref{equ:s L2}, and \eqref{equ:nab s L2}. Finally, for the term $J_7$, by the same argument we infer that for any $\epsilon>0$,
\begin{align*}
	\abs{J_7}
	&\leq
	\beta \norm{D_0}{W^{1,\infty}} \left( \norm{\bff{m}}{\bb{L}^\infty} \norm{\nabla\bff{m}}{\bb{L}^\infty} 
	+ 
	\norm{\bff{m}}{\bb{L}^\infty}^2 \norm{\bff{s}}{\bb{H}^2} \right) \norm{\nabla\Delta \bff{s}}{\bb{L}^2} 
	\\
	&\quad
	+
	\beta \norm{D_0}{W^{2,\infty}} \norm{\bff{m}}{\bb{L}^\infty}^2 \norm{\nabla \bff{s}}{\bb{L}^2} \norm{\nabla\Delta \bff{s}}{\bb{L}^2} 
	\\
	&\leq
	C\left(1+ \norm{\nabla\bff{m}}{\bb{L}^\infty}\right)
	+
	C\norm{\Delta \bff{s}}{\bb{L}^2}^2
	+
	\epsilon \norm{\nabla\Delta \bff{s}}{\bb{L}^2}^2,
\end{align*}
where in the last step we used elliptic regularity, Young's inequality, \eqref{equ:u Lp}, \eqref{equ:s L2}, and \eqref{equ:nab s L2}. Altogether, continuing from \eqref{equ:J1 to J7} we have
\begin{align*}
	&\frac12 \ddt \norm{\Delta \bff{s}}{\bb{L}^2}^2
	+
	D_\ast \norm{\nabla \Delta \bff{s}}{\bb{L}^2}^2
	\leq
	C\left(1+ \norm{\nabla \bff{m}}{\bb{L}^\infty}^2 \right) \left(1+ \norm{\Delta \bff{s}}{\bb{L}^2}^2 \right)
	+
	\epsilon \norm{\nabla\Delta \bff{s}}{\bb{L}^2}^2.
\end{align*}
Choosing $\epsilon>0$ sufficiently small, noting \eqref{equ:Delta m L2} and the Sobolev embedding $\bb{H}^2\hookrightarrow \bb{L}^\infty$, we infer the required result by the Gronwall lemma.
\end{proof}

\begin{lemma}\label{lem:dt m s}
{Let $(\bff{m},\bff{s})$ be a smooth solution of \eqref{equ:sdllb} with initial data $\bff{m}_0\in \bb{H}^1\cap \bb{L}^\infty$ and $\bff{s}_0\in \bb{L}^2$, such that $\beta D^\ast \norm{\bff{m}_0}{\bb{L}^\infty}^2 < D_\ast$. Then for all $t\in [0,T]$,
\begin{align}\label{equ:dt m s weak}
	\norm{\bff{m}}{H^1_T(\widetilde{\bb{H}}^{-1})}
	+
	\norm{\bff{s}}{H^1_T(\widetilde{\bb{H}}^{-1})}
	\leq
	C,
\end{align}
where the constant $C$ depends on the coefficients of the equation, $\norm{\bff{m}_0}{\bb{H}^1 \cap \bb{L}^\infty}$, and $\norm{\bff{s}_0}{\bb{L}^2}$.}

If, in addition, $\bff{m}_0\in \bb{H}^2$ and $\bff{s}_0\in \bb{H}^2$ such that \eqref{equ:small m0 CG} holds, and moreover $\bff{j}\in L^\infty\big(0,T;\bb{H}^2(\Omega;\bb{R}^d)\big)$ and $D_0\in W^{2,\infty}(\Omega)$, then for all $t\in [0,T]$,
\begin{align}\label{equ:dt m s}
	\norm{\bff{m}}{H^1_T(\bb{H}^1)}
	+
	\norm{\bff{s}}{H^1_T(\bb{H}^1)}
	\leq
	C,
\end{align}
where the constant $C$ depends on the coefficients of the equation, $\norm{\bff{m}_0}{\bb{H}^2}$, and $\norm{\bff{s}_0}{\bb{H}^2}$.
\end{lemma}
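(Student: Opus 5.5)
For the first estimate \eqref{equ:dt m s weak}, I will argue by duality, using the weak formulations \eqref{equ:weak m} and \eqref{equ:weak s} (all coefficients except $\beta$ set to $1$). Fix $\bff{\phi}\in\bb{H}^1$ and bound every term on the right of \eqref{equ:weak m} by $\norm{\bff{\phi}}{\bb{H}^1}$ times a coefficient:
\begin{align*}
	\abs{\inpro{\partial_t\bff{m}}{\bff{\phi}}}
	\leqs
	\Big(\norm{\bff{m}}{\bb{L}^\infty}\norm{\nabla\bff{m}}{\bb{L}^2}
	+\norm{\nabla\bff{m}}{\bb{L}^2}
	+\norm{\bff{m}}{\bb{L}^2}
	+\norm{\bff{m}}{\bb{L}^\infty}^2\norm{\bff{m}}{\bb{L}^2}
	+\norm{\bff{m}}{\bb{L}^\infty}\norm{\bff{s}}{\bb{L}^2}\Big)\norm{\bff{\phi}}{\bb{H}^1}.
\end{align*}
Then \eqref{equ:u Lp} with $p=2,\infty$, \eqref{equ:nab m L2} and \eqref{equ:s L2} make the bracket bounded in $L^\infty(0,T)$, so $\partial_t\bff{m}\in L^2_T(\widetilde{\bb{H}}^{-1})$.

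For $\bff{s}$, the same procedure on \eqref{equ:weak s} gives a bracket containing $\norm{\bff{m}}{\bb{L}^2}\norm{\bff{j}}{L^\infty(\Omega_T)}$, $D^\ast\norm{\nabla\bff{s}}{\bb{L}^2}$, $\beta D^\ast\norm{\bff{m}}{\bb{L}^\infty}^2\norm{\nabla\bff{s}}{\bb{L}^2}$ and $D^\ast(1+\norm{\bff{m}}{\bb{L}^\infty})\norm{\bff{s}}{\bb{L}^2}$. These terms are only square-integrable in time, which is enough by \eqref{equ:s L2}. Combining with $\bff{m},\bff{s}\in L^2_T(\bb{L}^2)$ yields \eqref{equ:dt m s weak}.

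For \eqref{equ:dt m s}, I will work with the strong form. I will show $\partial_t\bff{m},\partial_t\bff{s}\in L^2_T(\bb{H}^1)$ by bounding the $\bb{H}^1$ norms of the right-hand sides of \eqref{equ:sdllb a} and \eqref{equ:sdllb c} pointwise in time and then integrating. The available ingredients are \eqref{equ:Delta m L2} and \eqref{equ:Delta s L2}, which together with elliptic regularity give $\bff{m},\bff{s}\in L^\infty_T(\bb{H}^2)\cap L^2_T(\bb{H}^3)$, and the embedding $\bb{H}^2\hookrightarrow\bb{L}^\infty$.

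For $\bff{m}$, the gradient of the right-hand side contains the following terms:
\begin{itemize}
	\item $\nabla\bff{m}\times\Delta\bff{m}$, bounded by $\norm{\nabla\bff{m}}{\bb{L}^\infty}\norm{\Delta\bff{m}}{\bb{L}^2}$ using $\norm{\nabla\bff{m}}{\bb{L}^\infty}\leqs\norm{\bff{m}}{\bb{H}^3}$;
	\item $\bff{m}\times\nabla\Delta\bff{m}$ and $\nabla\Delta\bff{m}$, bounded in $L^2_T(\bb{L}^2)$ directly by \eqref{equ:Delta m L2};
	\item the cubic term, bounded via $\norm{\bff{m}}{\bb{L}^\infty}^2\norm{\nabla\bff{m}}{\bb{L}^2}$;
	\item $\nabla(\bff{m}\times\bff{s})$, bounded by $\norm{\nabla\bff{m}}{\bb{L}^4}\norm{\bff{s}}{\bb{L}^4}+\norm{\bff{m}}{\bb{L}^\infty}\norm{\nabla\bff{s}}{\bb{L}^2}$.
\end{itemize}

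For $\bff{s}$, I will expand $\nabla\cdot\bff{J}$ and differentiate once. The worst terms are $D_0\nabla\Delta\bff{s}$ and $\beta D_0(\bff{m}\otimes\bff{m})\nabla\Delta\bff{s}$, both controlled in $L^2_T(\bb{L}^2)$ by \eqref{equ:Delta s L2}. The cross terms are handled as follows:
\begin{itemize}
	\item $\nabla\bff{m}\,\nabla^2\bff{s}$ with $\bff{m}$ as a factor: use $\norm{\nabla\bff{m}}{\bb{L}^\infty}\norm{\bff{s}}{\bb{H}^2}$, which lies in $L^2_T$;
	\item $\nabla^2\bff{m}\,\nabla\bff{s}$: use $\norm{\bff{m}}{\bb{H}^2}$ times $\norm{\nabla\bff{s}}{\bb{L}^\infty}$, with the latter in $L^2_T$ by Agmon's inequality;
	\item $(\nabla\bff{m})^2\nabla\bff{s}$: use $\norm{\nabla\bff{m}}{\bb{L}^6}^2\norm{\nabla\bff{s}}{\bb{L}^6}$;
	\item terms involving $\nabla D_0$ and $\nabla^2 D_0$: controlled using $D_0\in W^{2,\infty}$;
	\item $\nabla\big(\nabla\cdot(\bff{m}\otimes\bff{j})\big)$: bounded by $\norm{\bff{m}}{\bb{H}^2}\norm{\bff{j}}{\bb{H}^2}$.
\end{itemize}
The $\bb{L}^2$ parts of $\partial_t\bff{m}$ and $\partial_t\bff{s}$ are bounded by the same means.

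I expect the main obstacle to be the bookkeeping of the highest-order products in $\partial_t\bff{s}$. Each such product must pair exactly one factor that is only in $L^2$ in time, such as $\norm{\bff{m}}{\bb{H}^3}$ or $\norm{\bff{s}}{\bb{H}^3}$, with factors that are bounded uniformly in time. Since all the needed uniform bounds already come from the previous lemmas, no absorption or smallness argument should be required here beyond the assumptions of those lemmas.
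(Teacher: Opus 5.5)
Your proposal is correct and follows essentially the same route as the paper. For \eqref{equ:dt m s weak} both arguments use duality with the weak formulations. For the strong estimate both bound the $\bb{H}^1$ norms of the right-hand sides of \eqref{equ:sdllb a} and \eqref{equ:sdllb c} using the $L^\infty_T(\bb{H}^2)\cap L^2_T(\bb{H}^3)$ bounds from Lemmas~\ref{lem:Delta m L2} and~\ref{lem:Delta s L2}; the paper packages these through $\bff{H}$ and $\norm{\bff{J}}{L^2_T(\bb{H}^2)}$ rather than expanding term by term as you do.
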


\begin{proof}
First, we show \eqref{equ:dt m s weak}. We have by \eqref{equ:weak m} and H\"older's inequality,
\begin{align*}
	\norm{\partial_t \bff{m}}{\widetilde{\bb{H}}^{-1}}
	&=
	\sup_{\bff{\varphi}\in \bb{H}^1,\; \norm{\bff{\varphi}}{\bb{H}^1} \leq 1} \big| \inpro{\partial_t \bff{m}}{\bff{\varphi}} \big|
	\\
	&\leq
	C\norm{\bff{m}}{\bb{L}^\infty} \norm{\nabla \bff{m}}{\bb{L}^2}
	+
	C\norm{\nabla \bff{m}}{\bb{L}^2}
	+
	C\norm{\bff{m}}{\bb{L}^2}
	+
	C\norm{\bff{m}}{\bb{L}^6}^3
	+
	C\norm{\bff{m}}{\bb{L}^\infty} \norm{\bff{s}}{\bb{L}^2}
	\leq C,
\end{align*}
where in the last step we used \eqref{equ:u Lp}, \eqref{equ:s L2}, \eqref{equ:nab m L2}, and the embedding $\bb{H}^1\hookrightarrow \bb{L}^6$. This implies $\norm{\partial_t \bff{m}}{L^2_T(\widetilde{\bb{H}}^{-1})} \leq C$. Similarly, by \eqref{equ:weak s},
\begin{align*}
	\norm{\partial_t \bff{s}}{\widetilde{\bb{H}}^{-1}}
	&=
	\sup_{\bff{\varphi}\in \bb{H}^1,\; \norm{\bff{\varphi}}{\bb{H}^1} \leq 1} \big| \inpro{\partial_t \bff{s}}{\bff{\varphi}} \big|
	\\
	&\leq
	C \norm{\bff{m}}{\bb{L}^2} \norm{\bff{j}}{L^\infty(\Omega_T)}
	+
	C \norm{\nabla \bff{s}}{\bb{L}^2}
	+
	C \norm{\bff{m}}{\bb{L}^\infty}^2 \norm{\nabla \bff{s}}{\bb{L}^2}
	+
	C \norm{\bff{s}}{\bb{L}^2} \norm{\bff{m}}{\bb{L}^\infty}
	\\
	&\leq
	C+ C\norm{\nabla \bff{s}}{\bb{L}^2},
\end{align*}
where in the last step we used \eqref{equ:u Lp} and \eqref{equ:s L2}. This implies $\norm{\partial_t \bff{s}}{L^2_T(\widetilde{\bb{H}}^{-1})} \leq C$ by \eqref{equ:s L2}, thus completing the proof of \eqref{equ:dt m s weak}.
	
Next, we prove \eqref{equ:dt m s}.
From \eqref{equ:sdllb a}, we have
\begin{align*}
	\norm{\partial_t \bff{m}}{L^2_T(\bb{L}^2)}
	&\leq
	\norm{\bff{m}}{L^\infty_T(\bb{L}^\infty)} \norm{\bff{H}}{L^2_T(\bb{L}^2)}
	+
	\norm{\bff{H}}{L^2_T(\bb{L}^2)}
	+
	\norm{\bff{m}}{L^\infty_T(\bb{L}^\infty)}
	\norm{\bff{s}}{L^2_T(\bb{L}^2)}
	\leq C,
\end{align*}
where in the last step we used \eqref{equ:u Lp}, \eqref{equ:s L2}, and \eqref{equ:Delta m L2}. Similarly, from \eqref{equ:sdllb c} we infer
\begin{align*}
	\norm{\partial_t \bff{s}}{L^2_T(\bb{L}^2)}
	&\leq
	\norm{\bff{J}}{L^2_T(\bb{H}^1)}
	+
	\norm{D_0 \bff{s}}{L^2_T(\bb{L}^2)}
	+
	\norm{D_0 \bff{s}}{L^2_T(\bb{L}^2)}
	\norm{\bff{m}}{L^\infty_T(\bb{L}^\infty)}
	\leq C.
\end{align*}
Furthermore, we also have
\begin{align*}
	\norm{\nabla \partial_t \bff{m}}{L^2_T(\bb{L}^2)}
	&\leq
	\norm{\nabla \bff{m}}{L^2_T(\bb{L}^\infty)}
	\norm{\bff{H}}{L^\infty_T(\bb{L}^2)}
	+
	\norm{\bff{m}}{L^\infty_T(\bb{L}^\infty)}
	\norm{\nabla \bff{H}}{L^2_T(\bb{L}^2)}
	\\
	&\quad
	+
	\norm{\nabla \bff{H}}{L^2_T(\bb{L}^2)}
	+
	\norm{\nabla \bff{m}}{L^2_T(\bb{L}^\infty)}
	\norm{\bff{s}}{L^\infty_T(\bb{L}^2)}
	\\
	&\quad
	+
	\norm{\bff{m}}{L^\infty_T(\bb{L}^\infty)}
	\norm{\nabla \bff{s}}{L^2_T(\bb{L}^2)}
	\leq C,
\end{align*}
where in the last step we used \eqref{equ:u Lp}, \eqref{equ:s L2}, \eqref{equ:Delta m L2}, and the Sobolev embedding $\bb{H}^3\hookrightarrow \bb{W}^{1,\infty}$. By similar argument, from \eqref{equ:sdllb c} we infer that
\begin{align*}
	\norm{\nabla \partial_t \bff{s}}{L^2_T(\bb{L}^2)}
	&\leq
	\norm{\bff{J}}{L^2_T(\bb{H}^2)}
	+
	\norm{D_0 \bff{s}}{L^2_T(\bb{H}^1)}
	+
	\norm{D_0 \bff{s}}{L^2_T(\bb{H}^1)} \norm{\bff{m}}{L^\infty_T(\bb{L}^\infty)}
	\\
	&\quad
	+
	\norm{D_0 \bff{s}}{L^\infty_T(\bb{L}^2)}
	\norm{\bff{m}}{L^2_T(\bb{W}^{1,\infty})}
	\leq C,
\end{align*}
where in the last step we also used the fact that from \eqref{equ:sdllb d},
\begin{align*}
	\norm{\bff{J}}{L^2_T(\bb{H}^2)}
	&\leq
	\norm{\bff{m}}{L^\infty_T(\bb{H}^2)}
	\norm{\bff{j}}{L^2_T(\bb{H}^2)}
	+
	\norm{\bff{s}}{L^2_T(\bb{H}^3)}
	\\
	&\quad
	+
	\beta \norm{D_0}{\bb{H}^2}
	\norm{\bff{m}}{L^\infty_T(\bb{H}^2)}^2
	\norm{\bff{s}}{L^2_T(\bb{H}^3)}
	\leq C,
\end{align*}
by \eqref{equ:Delta m L2} and \eqref{equ:Delta s L2}. This completes the proof of the lemma.
\end{proof}

We can now state the main theorem of this section.

\begin{theorem}\label{the:exist}
{Let $T>0$. Let initial data $\bff{m}_0\in \bb{H}^1\cap \bb{L}^\infty$ and $\bff{s}_0\in \bb{H}^1$ be given, such that $\beta D^\ast \norm{\bff{m}_0}{\bb{L}^\infty}^2 < D_\ast$. Then there exists a global weak solution (in the sense of Definition~\ref{def:weak}) to the problem~\eqref{equ:sdllb}. This solution satisfies 
\begin{equation}\label{equ:m less m0}	
\norm{\bff{m}}{L^\infty_T(\bb{L}^\infty)} \leq \norm{\bff{m}_0}{\bb{L}^\infty}.
\end{equation}
	
In addition, if $\bff{m}_0\in \bb{H}^2$ and $\bff{s}_0\in \bb{H}^2$ such that \eqref{equ:small m0 CG} holds, and $\bff{j}\in L^\infty\big(0,T;\bb{H}^2(\Omega;\bb{R}^d)\big)$ and $D_0\in W^{2,\infty}(\Omega)$,} then there exists a unique global strong solution (in the sense of Definition~\ref{def:strong}) to the problem~\eqref{equ:sdllb}. This strong solution satisfies energy equality \eqref{equ:energy ineq}.
\end{theorem}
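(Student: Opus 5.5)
The proof follows the Faedo--Galerkin route indicated at the start of this section. Let $\{\bff{e}_i\}$ be the eigenfunctions of the Neumann Laplacian (orthonormal in $\bb{L}^2$, orthogonal in $\bb{H}^1$), and let $\Pi_n$ be the $\bb{L}^2$-projection onto $\bb{V}_n:=\mathrm{span}\{\bff{e}_1,\dots,\bff{e}_n\}$. We seek $(\bff{m}_n,\bff{s}_n)\in C^1([0,T_n];\bb{V}_n)^2$ solving \eqref{equ:weak m}--\eqref{equ:weak s} for all test functions in $\bb{V}_n$, with $\bff{m}_n(0)=\Pi_n\bff{m}_0$ and $\bff{s}_n(0)=\Pi_n\bff{s}_0$. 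Local solvability follows from Cauchy--Lipschitz, since the nonlinearities are polynomial.

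The first obstacle is that Lemma~\ref{lem:s L2} uses the pointwise bound \eqref{equ:u Lp} with $p=\infty$, which comes from testing with $|\bff{m}|^{p-2}\bff{m}$. That test function is not in $\bb{V}_n$, and $\norm{\Pi_n\bff{m}_0}{\bb{L}^\infty}$ need not be controlled. I would handle this in one of two ways:
\begin{itemize}
\item[(a)] Run the Galerkin scheme on a truncated problem, replacing $\bff{m}\otimes\bff{m}$ in \eqref{equ:sdllb d} by $\bff{m}_R\otimes\bff{m}_R$ with $\bff{m}_R:=\bff{m}\min\{1,R/|\bff{m}|\}$ and $\beta R^2D^\ast<D_\ast$. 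This keeps the $\bff{s}$-equation uniformly parabolic at the Galerkin level. Pass to the limit, then prove the maximum principle \eqref{equ:m less m0} for the limit $\bff{m}$ by testing with $|\bff{m}|^{p-2}\bff{m}$, which is admissible for the limit. Choosing $R=\norm{\bff{m}_0}{\bb{L}^\infty}$ then makes the truncation inactive.
\item[(b)] Follow \cite{Le16, LeSoeTra24} and establish the $\bb{L}^p$ estimates directly for a suitable regularisation.
\end{itemize}
I plan to use (a). The $\bb{L}^2$, $\bb{L}^4$ energy estimates are legitimate at the Galerkin level, since $\bff{m}_n$, $\Delta\bff{m}_n$ and $\bff{s}_n$ all lie in $\bb{V}_n$. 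They give the analogues of Lemmas~\ref{lem:s L2}, \ref{lem:nab m L2} and \ref{lem:dt m s}, so the bounds are uniform in $n$ and $T_n=T$. Aubin--Lions then yields strong convergence in $L^2_T(\bb{L}^2)\cap C([0,T];\widetilde{\bb{H}}^{-1})$ and weak convergence in $L^2_T(\bb{H}^1)$, which is enough to pass to the limit in $\bff{m}\times\nabla\bff{m}$, $|\bff{m}|^2\bff{m}$, $\bff{m}\times\bff{s}$ and $(\bff{m}_R\otimes\bff{m}_R)\nabla\bff{s}$ (strong times weak). Continuity into $\bb{L}^2$ follows from the Lions--Magenes lemma, since the limit lies in $L^2_T(\bb{H}^1)\cap H^1_T(\widetilde{\bb{H}}^{-1})$.

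For the strong solution, the higher estimates of Lemmas~\ref{lem:Delta m L2}--\ref{lem:dt m s} are derived at the Galerkin level by testing with $\Delta^2\bff{m}_n$ and $\Delta^2\bff{s}_n$, which lie in $\bb{V}_n$. The smallness condition enters through the uniform $\bb{L}^\infty$ bound: I would use the $\bb{H}^2\hookrightarrow\bb{L}^\infty$ control or the truncation trick to replace $\norm{\bff{m}_n}{\bb{L}^\infty}$ by $\norm{\bff{m}_0}{\bb{L}^\infty}$ in the absorptions involving $C_{\mathrm{G}}$ and $\beta D^\ast$. The limit inherits the regularity of Definition~\ref{def:strong}, and $C([0,T];\bb{H}^2)$ follows from $L^2_T(\bb{H}^3)\cap H^1_T(\bb{H}^1)$ by interpolation. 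The energy equality \eqref{equ:energy ineq} is then justified rigorously by \eqref{equ:energy arg}, because $\partial_t\bff{m}\in L^2_T(\bb{H}^1)$ and $\bff{H}\in L^2_T(\bb{H}^1)$ make all the pairings meaningful.

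For uniqueness, take two strong solutions and set $\bff{u}=\bff{m}_1-\bff{m}_2$, $\bff{w}=\bff{s}_1-\bff{s}_2$. Test the $\bff{u}$-equation with $\bff{u}-\Delta\bff{u}$ and the $\bff{w}$-equation with $\bff{w}$. The key term is $\beta\inpro{D_0(\bff{m}_1\otimes\bff{m}_1)\nabla\bff{w}}{\nabla\bff{w}}$, which is absorbed by $D_\ast\norm{\nabla\bff{w}}{\bb{L}^2}^2$ using \eqref{equ:m less m0} and the smallness assumption. The cross term $\beta\inpro{D_0(\bff{m}_1\otimes\bff{m}_1-\bff{m}_2\otimes\bff{m}_2)\nabla\bff{s}_2}{\nabla\bff{w}}$ is bounded by $\norm{\bff{u}}{\bb{L}^\infty}\norm{\nabla\bff{s}_2}{\bb{L}^2}\norm{\nabla\bff{w}}{\bb{L}^2}$. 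I would estimate $\norm{\bff{u}}{\bb{L}^\infty}\lesssim\norm{\bff{u}}{\bb{H}^1}^{1/2}\norm{\bff{u}}{\bb{H}^2}^{1/2}$ by Agmon's inequality, with the $\bb{H}^2$ norm controlled by the $\norm{\Delta\bff{u}}{\bb{L}^2}^2$ dissipation from the $\bff{u}$-equation. The terms $\bff{u}\times\nabla\bff{m}_2$ and the cubic differences are handled with the $L^\infty_T(\bb{H}^2)$ and $L^2_T(\bb{H}^3)$ bounds. Gronwall's inequality on $\norm{\bff{u}}{\bb{H}^1}^2+\norm{\bff{w}}{\bb{L}^2}^2$ then gives $\bff{u}=\bff{w}=\bff{0}$. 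I expect the main obstacle to be justifying the $\bb{L}^\infty$ bound, which holds for the limit only after the Galerkin stage, together with its use in the parabolicity absorption.
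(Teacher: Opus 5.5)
Your overall architecture matches the paper's. You use a priori estimates and Banach--Alaoglu/Aubin--Lions compactness, Lions--Magenes for $C([0,T];\bb{L}^2)$, the embedding $L^2_T(\bb{H}^3)\cap H^1_T(\bb{H}^1)\hookrightarrow C([0,T];\bb{H}^2)$, and the energy identity via \eqref{equ:energy arg}. The paper derives all its estimates formally for smooth solutions and only asserts that a Galerkin justification exists. Your truncation of $\bff{m}\otimes\bff{m}$ in the $\bff{s}$-equation is a sound way to make the weak-solution part rigorous: the $\bff{m}$-equation is untouched, so its cross-product terms stay orthogonal to $\bff{m}$.

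One correction there. Before you know $\bff{m}\in\bb{L}^\infty$, you only have $\bff{m}\in L^\infty_T(\bb{H}^1)\cap L^2_T(\bb{H}^2)$, so $|\bff{m}|^{p-2}\bff{m}$ is not obviously in $L^2_T(\bb{H}^1)$ for large $p$. Instead, test the limit equation with the Lipschitz truncation $\bff{m}-P_R\bff{m}=(|\bff{m}|-R)_+\bff{m}/|\bff{m}|$, where $P_R$ is the projection onto the closed ball of radius $R=\norm{\bff{m}_0}{\bb{L}^\infty}$. Every term then has the right sign, and you get $\ddt\int_\Omega(|\bff{m}|-R)_+^2\,\dx\le 0$, which is \eqref{equ:m less m0}.

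The step that fails as written is the strong-solution part. In Lemma~\ref{lem:Delta m L2}, the term $J_1=\inpro{\nabla\bff{m}\times\Delta\bff{m}}{\nabla\Delta\bff{m}}$ is absorbed only because \eqref{equ:C1}--\eqref{equ:C2} produce the factor $C_{\mathrm{G}}\norm{\bff{m}}{\bb{L}^\infty}$, which is $<1$ by \eqref{equ:small m0 CG} and \eqref{equ:u Lp}. For Galerkin approximations you need $C_{\mathrm{G}}\sup_t\norm{\bff{m}_n(t)}{\bb{L}^\infty}<1$ uniformly in $n$, and neither of your devices gives this:
\begin{enumerate}
\item $\bb{H}^2\hookrightarrow\bb{L}^\infty$ only yields $\norm{\bff{m}_n}{\bb{L}^\infty}\lesssim\norm{\bff{m}_n}{\bb{H}^2}$. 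That is the quantity being estimated, and it is not small under \eqref{equ:small m0 CG}.
\item Your truncation acts only in the $\bff{s}$-equation, whereas $J_1$ comes from the precession term $\bff{m}\times\Delta\bff{m}$ of the $\bff{m}$-equation. Truncating that term as well would not help: the Gagliardo--Nirenberg factors bound $\nabla\bff{m}_n$ and $\Delta\bff{m}_n$, so they still carry the untruncated $\norm{\bff{m}_n}{\bb{L}^\infty}$.
\end{enumerate}
There is a second problem: $\bff{m}\mapsto\bff{m}_R$ is merely Lipschitz, so $\bff{m}_R\notin\bb{H}^2$ in general. The $\bb{H}^2$-estimate for $\bff{s}$ in Lemma~\ref{lem:Delta s L2}, which differentiates $\bff{m}\otimes\bff{m}$ twice, is therefore unavailable for your truncated problem.

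The paper does not resolve this either, but one way to close it is as follows.
\begin{enumerate}
\item Put a smooth cutoff $\Phi$ in the $\bff{s}$-equation, equal to the identity on $\{|\bff{m}|\le\norm{\bff{m}_0}{\bb{L}^\infty}\}$ and bounded by some $R'$ with $\beta D^\ast R'^2<D_\ast$. Such an $R'$ exists by the strict inequality in \eqref{equ:small m0 CG}.
\item Galerkin then gives a local strong solution on some $[0,T^\ast)$ without any smallness, using e.g.\ $|J_1|\le\epsilon\norm{\nabla\Delta\bff{m}}{\bb{L}^2}^2+C\norm{\bff{m}}{\bb{H}^2}^6$.
\item For this genuine strong solution the $\bb{L}^p$-test is admissible, so \eqref{equ:u Lp} holds and the cutoff is inactive.
\item Lemmas~\ref{lem:Delta m L2}--\ref{lem:dt m s} then become true estimates with constants independent of $T^\ast$. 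This rules out blow-up of the $\bb{H}^2$ norm and lets you continue the solution to $[0,T]$.
\end{enumerate}

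Your uniqueness argument is correct but takes a different route from the paper's. The paper tests only with $\bff{u}$ and $\bff{r}$. It bounds the cross terms $I_4,I_5$ by $\norm{\nabla\bff{s}'}{\bb{L}^\infty}\norm{\bff{u}}{\bb{L}^2}\norm{\nabla\bff{r}}{\bb{L}^2}$, and closes Gronwall at the $\bb{L}^2$ level with the weight $\norm{\bff{m}}{\bb{W}^{1,\infty}}^2+\norm{\bff{s}'}{\bb{W}^{1,\infty}}^2\in L^1(0,T)$ supplied by $L^2_T(\bb{H}^3)$.

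You instead put the $\bb{L}^\infty$ norm on $\bff{u}$ and pay with an $\bb{H}^1$-level estimate. The extra term to control after testing with $-\Delta\bff{u}$ is $\inpro{\bff{u}\times\Delta\bff{m}_2}{\Delta\bff{u}}$, not $\bff{u}\times\nabla\bff{m}_2$. Agmon and $\bff{m}_2\in L^\infty_T(\bb{H}^2)$ bound it by $\epsilon\norm{\bff{u}}{\bb{H}^2}^2+C\norm{\bff{m}_2}{\bb{H}^2}^4\norm{\bff{u}}{\bb{H}^1}^2$. Your version needs only $\bff{s}_2\in L^\infty_T(\bb{H}^1)$ rather than $\nabla\bff{s}'\in L^2_T(\bb{L}^\infty)$; the paper's is shorter.
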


\begin{proof}
{First, we show the existence of a global weak solution. A straightforward application of the Banach--Alaoglu theorem and a compactness argument, utilising a priori estimates \eqref{equ:u Lp}, \eqref{equ:u grad Lp}, \eqref{equ:s L2}, \eqref{equ:nab m L2}, \eqref{equ:dt m s weak}, and the Aubin--Lions lemma yields the required global weak solution in the sense of Definition \ref{def:weak}. Since this part of the argument is standard (see e.g. \cite{Le16, LeSoeTra24}), further detail is omitted. Note that the continuity in time follows from the Lions--Magenes interpolation lemma (Theorem~II.5.13 of \cite{BoyFab13}):
\begin{align*}
	H^1(0,T;\widetilde{\bb{H}}^{-1}) \cap L^2(0,T;\bb{H}^1) \hookrightarrow C([0,T];\bb{L}^2).
\end{align*}	
Inequality \eqref{equ:m less m0} follows from \eqref{equ:u Lp}, noting that $e^{-t}\leq 1$ for $t\geq 0$.}

{If further regularity on the initial data, $\bff{j}$, and $D_0$ are assumed, then we obtain $\bff{m}\in L^\infty_T(\bb{H}^2) \cap L^2_T(\bb{H}^3)$ and $\bff{s}\in L^\infty_T(\bb{H}^2) \cap L^2_T(\bb{H}^3)$ by uniform estimates \eqref{equ:Delta m L2} and \eqref{equ:Delta s L2}. A priori estimate \eqref{equ:dt m s} further shows that $\bff{m}\in H^1_T(\bb{H}^1)$ and $\bff{s}\in H^1_T(\bb{H}^1)$.} By the embedding
\[
L^2(0,T;\bb{H}^3) \cap H^1(0,T;\bb{H}^1) \hookrightarrow C([0,T];\bb{H}^2),
\]
we obtain a strong solution with regularity specified in Definition~\ref{def:strong}. The energy equality~\eqref{equ:energy ineq} now follows by the argument leading to~\eqref{equ:energy arg}, noting that a strong solution $(\bff{m},\bff{s})$ satisfies \eqref{equ:sdllb} for almost every $(t,\bff{x})\in \Omega_T$ as well as the regularity of strong solution $\bff{m}$ and $\bff{s}$ obtained above.

It remains to show uniqueness of this strong solution. To this end, suppose that $(\bff{m},\bff{s})$ and $(\bff{m}',\bff{s}')$ are two strong solutions corresponding to initial data $(\bff{m}_0,\bff{s}_0)$ and $(\bff{m}_0',\bff{s}_0')$, respectively. Let $\bff{u}:= \bff{m}-\bff{m}'$, $\bff{r}:=\bff{s}-\bff{s}'$, $\bff{u}_0:= \bff{m}_0-\bff{m}_0'$, and $\bff{r}_0:=\bff{s}_0 -\bff{s}_0'$. Then for almost every $(t,\bff{x})\in \Omega_T$, the function $\bff{u}$ satisfies
\begin{align}\label{equ:u unique}
	\partial_t \bff{u}
	&=
	- \big(\bff{m}\times \Delta \bff{u}+ \bff{u}\times \Delta \bff{m}'\big)
	+ \Delta \bff{u}
	- \bff{u}
	- \big( |\bff{m}|^2 \bff{u}+ ((\bff{m}+\bff{m}')\cdot \bff{u})\bff{m}' \big)
	\nonumber\\
	&\quad
	- \big( \bff{m}\times \bff{r}+ \bff{u} \times \bff{s}' \big),
\end{align}
while for almost every $(t,\bff{x})\in \Omega_T$, the function $\bff{r}$ satisfies
\begin{align}\label{equ:r unique}
	\partial_t \bff{r}
	&=
	- \nabla\cdot (\bff{u}\otimes \bff{j})
	+ \nabla\cdot (D_0\nabla \bff{r})
	- \beta \nabla \cdot \left(D_0 \big[(\bff{m}\otimes \bff{m}) \nabla \bff{r} + (\bff{m}\otimes \bff{u}) \nabla \bff{s}' + (\bff{u}\otimes \bff{m}') \nabla \bff{s}'\big] \right)
	\nonumber\\
	&\quad
	- D_0 \bff{r}
	- D_0 \big(\bff{r}\times \bff{m}+ \bff{s}'\times \bff{u} \big),
\end{align}
with initial data $\bff{u}_0 \in\bb{H}^2$ and $\bff{r}_0\in \bb{H}^2$, as well as boundary data $\partial_{\bff{n}} \bff{u}=\partial_{\bff{n}} \bff{r}=\bff{0}$.

We will establish a continuous dependence estimate with respect to the initial data, which will imply uniqueness. Taking the inner product of \eqref{equ:u unique} with $\bff{u}$ gives
\begin{align*}
	&\frac12 \ddt \norm{\bff{u}}{\bb{L}^2}^2
	+
	\norm{\nabla \bff{u}}{\bb{L}^2}^2
	+
	\norm{\bff{u}}{\bb{L}^2}^2
	+
	\norm{|\bff{m}| |\bff{u}|}{\bb{L}^2}^2
	+
	\norm{\bff{m}' \cdot \bff{u}}{\bb{L}^2}^2
	\\
	&=
	- \inpro{\nabla \bff{m}\times \bff{u}}{\nabla \bff{u}}
	- \inpro{(\bff{m}\cdot\bff{u})\bff{m}'}{\bff{u}}
	- \inpro{\bff{m}\times \bff{r}}{\bff{u}}
	\\
	&\leq
	\frac12 \norm{\nabla \bff{u}}{\bb{L}^2}^2
	+
	\frac12 \norm{\nabla \bff{m}}{\bb{L}^\infty}^2 \norm{\bff{u}}{\bb{L}^2}^2
	+
	\frac12 \norm{|\bff{m}| |\bff{u}|}{\bb{L}^2}^2
	+
	\frac12 \norm{\bff{m}' \cdot \bff{u}}{\bb{L}^2}^2
	\\
	&\quad
	+
	\frac12 \norm{\bff{m}}{\bb{L}^\infty}^2 \norm{\bff{u}}{\bb{L}^2}^2
	+
	\frac12 \norm{\bff{r}}{\bb{L}^2}^2,
\end{align*}
where in the last step we used Young's inequality. Rearranging the terms, we obtain
\begin{align}\label{equ:ineq ddt u}
	\ddt \norm{\bff{u}}{\bb{L}^2}^2
	+
	\norm{\bff{u}}{\bb{H}^1}^2
	\leq
	\norm{\bff{m}}{\bb{W}^{1,\infty}}^2 \norm{\bff{u}}{\bb{L}^2}^2
	+
	\norm{\bff{r}}{\bb{L}^2}^2.
\end{align}
Next, taking the inner product of \eqref{equ:r unique} with $\bff{r}$ gives
\begin{align}\label{equ:I1 to I5 un}
	&\frac12 \ddt \norm{\bff{r}}{\bb{L}^2}^2
	+
	\inpro{D_0 \nabla \bff{r}}{\nabla\bff{r}}
	+
	\inpro{D_0\bff{r}}{\bff{r}}
	\nonumber\\
	&=
	- \inpro{\nabla \cdot (\bff{u}\otimes\bff{j})}{\bff{r}}
	- \inpro{D_0\bff{s}' \times \bff{u}}{\bff{r}}
	\nonumber\\
	&\quad
	+ \beta \inpro{D_0(\bff{m}\otimes \bff{m})\nabla\bff{r}}{\nabla\bff{r}}
	+ \beta \inpro{D_0(\bff{m}\otimes \bff{u}) \nabla\bff{s}'}{\nabla\bff{r}}
	+ \beta \inpro{D_0(\bff{u}\otimes\bff{m}') \nabla\bff{s}'}{\nabla\bff{r}}
	\nonumber\\
	&\quad
	=: I_1+I_2+\cdots+I_5.
\end{align}
We estimate each term in the following. Firstly, by H\"older's and Young's inequalities,
\begin{align*}
	\abs{I_1}
	&\leq
	\norm{\bff{j}}{L^\infty_T(\bb{L}^\infty)} \norm{\nabla \bff{u}}{\bb{L}^2} \norm{\bff{r}}{\bb{L}^2}
	+
	\norm{\bff{j}}{L^\infty_T(\bb{W}^{1,4})} \norm{\bff{u}}{\bb{L}^4} \norm{\bff{r}}{\bb{L}^2}
	\\
	&\leq
	\frac14 \norm{\bff{u}}{\bb{H}^1}^2
	+
	C \norm{\bff{j}}{L^\infty_T(\bb{H}^2)}^2 \norm{\bff{r}}{\bb{L}^2}^2,
\end{align*}
where in the last step we used the Sobolev embedding $\bb{H}^2\hookrightarrow \bb{W}^{1,4} \hookrightarrow \bb{L}^\infty$. For the second term, similarly we have
\begin{align*}
	\abs{I_2} 
	\leq
	\frac12 \norm{D_0}{L^\infty}^2 \norm{\bff{s}'}{\bb{L}^\infty}^2 \norm{\bff{u}}{\bb{L}^2}^2
	+
	\frac12 \norm{\bff{r}}{\bb{L}^2}^2
	\leq
	\frac12 D^\ast  \norm{\bff{s}'}{\bb{L}^\infty}^2 \norm{\bff{u}}{\bb{L}^2}^2
	+
	\frac12 \norm{\bff{r}}{\bb{L}^2}^2.
\end{align*}
For the term $I_3$, by the same argument we obtain
\begin{align*}
	\abs{I_3} 
	\leq	
	\beta D^\ast \norm{\bff{m}}{\bb{L}^\infty}^2 \norm{\nabla\bff{r}}{\bb{L}^2}^2
	\leq
	\beta D^\ast \norm{\bff{m}_0}{\bb{L}^\infty}^2  \norm{\nabla\bff{r}}{\bb{L}^2}^2,
\end{align*}
where in the last step we used \eqref{equ:u Lp}. For the terms $I_4$ and $I_5$, by Young's inequality and \eqref{equ:u Lp} again, we have for any $\epsilon>0$,
\begin{align*}
	\abs{I_4}
	&\leq
	\epsilon \beta \norm{\nabla\bff{r}}{\bb{L}^2}^2
	+
	\frac{\beta D^\ast}{4\epsilon} \norm{\bff{m}_0}{\bb{L}^\infty}^2 \norm{\nabla\bff{s}'}{\bb{L}^\infty}^2 \norm{\bff{u}}{\bb{L}^2}^2,
	\\
	\abs{I_5}
	&\leq
	\epsilon \beta  \norm{\nabla\bff{r}}{\bb{L}^2}^2
	+
	\frac{\beta D^\ast}{4\epsilon} \norm{\bff{m}_0'}{\bb{L}^\infty}^2 \norm{\nabla\bff{s}'}{\bb{L}^\infty}^2 \norm{\bff{u}}{\bb{L}^2}^2.
\end{align*}
Altogether, continuing from \eqref{equ:I1 to I5 un} we have for any $\epsilon>0$,
\begin{align*}
	\frac12 \ddt \norm{\bff{r}}{\bb{L}^2}^2
	+
	D_\ast \norm{\bff{r}}{\bb{H}^1}^2
	&\leq
	C \norm{\bff{r}}{\bb{L}^2}^2
	+
	\frac14 \norm{\bff{u}}{\bb{H}^1}^2
	+
	\frac12 D^\ast  \norm{\bff{s}'}{\bb{L}^\infty}^2 \norm{\bff{u}}{\bb{L}^2}^2
	\\
	&\quad
	+
	\big(\beta D^\ast \norm{\bff{m}_0}{\bb{L}^\infty}^2  + 2\epsilon \beta \big) \norm{\nabla\bff{r}}{\bb{L}^2}^2
	\\
	&\quad
	+
	\frac{\beta D^\ast}{4\epsilon} 
	\left(\norm{\bff{m}_0}{\bb{L}^\infty}^2 + \norm{\bff{m}_0'}{\bb{L}^\infty}^2 \right) \norm{\nabla\bff{s}'}{\bb{L}^\infty}^2 \norm{\bff{u}}{\bb{L}^2}^2.
\end{align*}
Now, by the assumption \eqref{equ:small m0 CG}, since $\norm{\bff{m}_0}{\bb{L}^\infty}^2 < D_\ast/(\beta D^\ast)$, we can choose 
\[
\epsilon= \frac{1}{4\beta} \left( D_\ast - \beta D^\ast \norm{\bff{m}_0}{\bb{L}^\infty}^2 \right) > 0,
\]
to absorb the term containing $\norm{\nabla\bff{r}}{\bb{L}^2}^2$ to the left-hand side. This yields
\begin{align}\label{equ:ineq ddt r}
	\ddt \norm{\bff{r}}{\bb{L}^2}^2
	\leq
	C \norm{\bff{r}}{\bb{L}^2}^2
	+
	\frac14 \norm{\bff{u}}{\bb{H}^1}^2
	+
	C \norm{\bff{s}'}{\bb{W}^{1,\infty}}^2 \norm{\bff{u}}{\bb{L}^2}^2.
\end{align}
Adding \eqref{equ:ineq ddt u} and \eqref{equ:ineq ddt r}, and rearranging the terms, we obtain
\begin{align*}
	\ddt \left( \norm{\bff{u}}{\bb{L}^2}^2 + \norm{\bff{r}}{\bb{L}^2}^2 \right)
	\leq
	C \left(1+ \norm{\bff{m}}{\bb{W}^{1,\infty}}^2 + \norm{\bff{s}'}{\bb{W}^{1,\infty}}^2 \right) 
	\left( \norm{\bff{u}}{\bb{L}^2}^2+ \norm{\bff{r}}{\bb{L}^2}^2 \right).
\end{align*}
Noting the regularity of the strong solution, by the Gronwall lemma and the Sobolev embedding $\bb{H}^3 \hookrightarrow \bb{W}^{1,\infty}$, we have the continuous dependence estimate
\begin{align*}
	\norm{\bff{u}}{\bb{L}^2}^2 + \norm{\bff{r}}{\bb{L}^2}^2
	\leq
	C \left(\norm{\bff{u}_0}{\bb{L}^2}^2 + \norm{\bff{r}_0}{\bb{L}^2}^2 \right),
\end{align*}
where $C$ depends on $T$, $\norm{\bff{m}_0}{\bb{H}^2}$, and $\norm{\bff{s}_0}{\bb{H}^2}$. This concludes the proof of the theorem.
\end{proof}

\section{Finite element approximation} \label{sec:fem}

Let $\bb{V}_h$ be the finite element space in \eqref{equ:Vh}, and let $k$ be the time-step size. Let $(\bff{m}_h^n, \bff{s}_h^n) \in \bb{V}_h  \times \bb{V}_h$ be the numerical approximation of $(\bff{m}(t_n), \bff{s}(t_n))$, where $t_n:= nk\in [0,T]$ and $n\in \{0,1,\ldots, N\}$, and $N:=\lfloor T/k \rfloor$. 
We denote $\bff{v}^n:= \bff{v}(t_n)$. For any discrete function $\bff{v}$, define for $n\in\bb{N}$,
\begin{align*}
	\dtt \bff{v}^n := \frac{\bff{v}^n-\bff{v}^{n-1}}{k}.
\end{align*}

Let $(\bff{m}_h^0, \bff{s}_h^0)$ be a suitable approximation of the initial data. For ease of presentation, suppose that $\bff{j}\in C\big([0,T]; \bb{H}^1\big)$ so that the expression $\bff{j}^n$ are well-defined for $n\in \{0,1,\ldots, N\}$. It is possible to replace $\bff{j}^n$ by a numerical approximation $\bff{j}_h^n$ without changing the analysis significantly.
A fully discrete linearised finite element scheme to solve \eqref{equ:sdllb} can be described as follows.

\begin{algorithm}\label{alg:scheme}
	Input: $(\bff{m}_h^0, \bff{s}_h^0), \{\bff{j}^n\}_{0\leq n\leq N}$.
	\\
	For $n=1,2,\ldots,N$, iterate:
	\begin{enumerate}
		\item compute $\bff{m}_h^n\in \bb{V}_h$ such that
		\begin{align}\label{equ:scheme m}
			\inpro{\dtt \bff{m}_h^n}{\bff{\phi}_h}
			&=
			\inpro{\bff{m}_h^{n-1} \times \nabla \bff{m}_h^n}{\nabla\bff{\phi}_h}
			-
			\inpro{\nabla \bff{m}_h^n}{\nabla \bff{\phi}_h}
			-
			\inpro{\bff{m}_h^n}{\bff{\phi}_h}
			\nonumber\\
			&\quad
			-
			\inpro{|\bff{m}_h^{n-1}|^2 \bff{m}_h^n}{\bff{\phi}_h}
			-
			\inpro{\bff{m}_h^n \times \bff{s}_h^{n-1}}{\bff{\phi}_h},
			\quad \forall \bff{\phi}_h\in \bb{V}_h;
		\end{align}
		\item compute $\bff{s}_h^n \in \bb{V}_h$ such that
		\begin{align}\label{equ:scheme s}
			\inpro{\dtt \bff{s}_h^n}{\bff{\phi}_h}
			&=
			\inpro{\bff{m}_h^{n-1} \otimes \bff{j}^n}{\nabla\bff{\psi}_h}
			-
			\inpro{D_0 \nabla \bff{s}_h^n}{\nabla \bff{\psi}_h}
			-
			\inpro{D_0 \bff{s}_h^n}{\bff{\psi}_h}
			\nonumber\\
			&\quad
			+
			\beta\inpro{D_0 \big(\bff{m}_h^{n-1}\otimes \bff{m}_h^{n-1}\big) \nabla \bff{s}_h^n}{\nabla \bff{\psi}_h}
			-
			\inpro{D_0 \bff{s}_h^n\times \bff{m}_h^{n-1}}{\bff{\psi}_h},
			\quad \forall \bff{\psi}_h\in \bb{V}_h;
		\end{align}
	\end{enumerate}
	Output: a sequence of discrete functions $\big\{(\bff{m}_h^n, \bff{s}_h^n)\big\}_{1\leq n\leq N}$.
\end{algorithm}

We remark that while \eqref{equ:sdllb} is a nonlinearly coupled system of quasilinear PDEs, the above scheme only requires solving two completely decoupled linear systems per time-step. Note that, in contrast to the scheme presented in \cite{AbeHrkPagPraRugSue14} for the SDLLG equation which solves for $\bff{m}_h^n$ and then $\bff{s}_h^n$ sequentially, Algorithm~\ref{alg:scheme} for the SDLLB equation allows $\bff{m}_h^n$ and $\bff{s}_h^n$ to be computed \emph{in parallel}, owing to the explicit treatment of $\bff{m}_h^{n-1}$ in \eqref{equ:scheme s}. 
This decoupling provides a significant computational advantage, particularly in large-scale simulations or parallel computing environments.

To derive an error estimate in this section, for simplicity we {take $(\bff{m}_h^0,\bff{s}_h^0)= (P_h \bff{m}_0, P_h\bff{s}_0)$, where $P_h$ is the $\bb{L}^2$ projection operator defined in \eqref{equ:L2 proj}.} Note that other initial inputs are possible as long as they approximate the initial data with sufficient accuracy. We further assume adequate regularity for the exact solution $(\bff{m},\bff{s})$ to \eqref{equ:sdllb} to derive an optimal order of convergence, namely
\begin{equation}\label{equ:assum}
		\bff{m},\bff{s} \in L^\infty_T(\bb{H}^3) \cap W^{1,\infty}_T(\bb{H}^{r+1}) \cap W^{2,\infty}_T(\bb{L}^2),
\end{equation}
where $r$ is the degree of polynomials used in $\bb{V}_h$.

We now define several bilinear forms and discuss their properties to facilitate subsequent analysis.

\begin{definition}\label{def:bilinear}
Given $\bff{\phi},\bff{\psi}\in \bb{H}^1\cap \bb{L}^\infty$ and $\beta>0$, together with $D_0$ and $\bff{j}$ satisfying the assumptions stated in Section~\ref{sec:assum}, we define bilinear forms on $\bb{H}^1\times \bb{H}^1$:
\begin{align*}
	\mathcal{D}_1(\bff{v},\bff{w}) &:= \inpro{\nabla \bff{v}}{\nabla \bff{w}} + \inpro{\bff{v}}{\bff{w}},
	\\
	\mathcal{C}_1(\bff{\phi};\bff{v},\bff{w}) &:= -\inpro{\bff{\phi}\times \nabla \bff{v}}{\nabla \bff{w}},
	\\
	\mathcal{B}_1(\bff{\phi},\bff{\psi}; \bff{v},\bff{w}) &:= \inpro{(\bff{\phi}\cdot \bff{\psi})\bff{v}}{\bff{w}},
	\\
	\mathcal{L}_1(\bff{\phi};\bff{v},\bff{w}) &:= -\inpro{\bff{v}\times \bff{\phi}}{\bff{w}},
	\\
	\mathcal{D}_2(\bff{v},\bff{w}) &:= \inpro{D_0 \nabla \bff{v}}{\nabla \bff{w}} + \inpro{D_0 \bff{v}}{\bff{w}},
	\\
	\mathcal{C}_2(\bff{\phi};\bff{v},\bff{w}) &:= \inpro{D_0\bff{v} \times\bff{\phi}}{\bff{w}},
	\\
	\mathcal{B}_2(\bff{\phi},\bff{\psi}; \bff{v},\bff{w}) &:= -\beta \inpro{D_0 (\bff{\phi}\otimes \bff{\psi}) \nabla \bff{v}}{\nabla\bff{w}},
	\\
	\mathcal{L}_2(\bff{\phi};\bff{v},\bff{w}) &:= {-\inpro{\bff{v}\otimes \bff{\phi}}{\nabla\bff{w}}.}
\end{align*}
Furthermore, let
\begin{align}\label{equ:bilinear A}
	\mathcal{A}_i(\bff{\phi};\bff{v},\bff{w}) := \mathcal{D}_i(\bff{v},\bff{w})
	+
	\mathcal{C}_i(\bff{\phi}; \bff{v},\bff{w})
	+
	\mathcal{B}_i(\bff{\phi},\bff{\phi};\bff{v},\bff{w}),
	\text{ for } i=1,2.
\end{align}
\end{definition}

With the above notations, the weak formulations \eqref{equ:weak m} and \eqref{equ:weak s} for the continuous problem now read as:
\begin{align}
	\label{equ:weak m cont}
	\inpro{\partial_t \bff{m}}{\bff{\phi}} 
	+
	\mathcal{A}_1(\bff{m};\bff{m},\bff{\phi})
	+
	\mathcal{L}_1(\bff{s};\bff{m},\bff{\phi})
	&= 0,
	\quad \forall \bff{\phi}\in \bb{H}^1,
	\\
	\label{equ:weak s cont}
	\inpro{\partial_t \bff{s}}{\bff{\psi}} 
	+
	\mathcal{A}_2(\bff{m};\bff{s},\bff{\psi})
	+
	\mathcal{L}_2(\bff{j};\bff{m},\bff{\psi})
	&=0,
	\quad \forall \bff{\psi}\in \bb{H}^1.
\end{align}
The weak formulations \eqref{equ:scheme m} and \eqref{equ:scheme s} for the discrete scheme can be written as:
\begin{align}
	\label{equ:weak m disc}
	\inpro{\dtt \bff{m}_h^n}{\bff{\phi}_h}
	+
	\mathcal{A}_1(\bff{m}_h^{n-1};\bff{m}_h^n, \bff{\phi}_h)
	+
	\mathcal{L}_1(\bff{s}_h^{n-1}; \bff{m}_h^n, \bff{\phi}_h)
	&= 0,
	\quad \forall \bff{\phi}_h\in \bb{V}_h,
	\\
	\label{equ:weak s disc}
	\inpro{\dtt \bff{s}_h^n}{\bff{\psi}_h}
	+
	\mathcal{A}_2(\bff{m}_h^{n-1};\bff{s}_h^n, \bff{\psi}_h)
	+
	\mathcal{L}_2(\bff{j}^n; \bff{m}_h^{n-1}, \bff{\psi}_h)
	&=0,
	\quad \forall \bff{\psi}_h\in \bb{V}_h.
\end{align}

We collect some important estimates on the previously defined bilinear forms in the following lemma.

\begin{lemma}\label{lem:coer bdd A}
The following statements hold true for the bilinear forms defined in Definition~\ref{def:bilinear}:
\begin{enumerate}[(i)]
	\item $\mathcal{A}_1(\bff{\phi}; \bff{\cdot},\bff{\cdot})$ and $\mathcal{A}_2(\bff{\phi}; \bff{\cdot},\bff{\cdot})$ are bounded, i.e. there exist constants $\beta_1,\beta_2>0$ depending only on the coefficients of the problem \eqref{equ:sdllb}, $\norm{\bff{\phi}}{\bb{L}^\infty}$, and $\norm{\bff{\phi}}{\bb{H}^1}$, such that
	\begin{align}
		\label{equ:A1 bdd}
		\abs{\mathcal{A}_1(\bff{\phi};\bff{v},\bff{w})} 
		&\leq
		\beta_1 \norm{\bff{v}}{\bb{H}^1} \norm{\bff{w}}{\bb{H}^1},
		\quad \forall \bff{v},\bff{w}\in \bb{H}^1,
		\\
		\label{equ:A2 bdd}
		\abs{\mathcal{A}_2(\bff{\phi};\bff{v},\bff{w})} 
		&\leq
		\beta_2 \norm{\bff{v}}{\bb{H}^1} \norm{\bff{w}}{\bb{H}^1},
		\quad \forall \bff{v},\bff{w}\in \bb{H}^1;
	\end{align}
	\item $\mathcal{A}_1(\bff{\phi}; \bff{\cdot},\bff{\cdot})$ is coercive, i.e. there exist a constant $\mu_1>0$ \emph{independent} of $\bff{\phi}$, such that
	\begin{align}\label{equ:A1 coercive}
		\abs{\mathcal{A}_1(\bff{\phi};\bff{v},\bff{v})} 
		&\geq
		\mu_1 \norm{\bff{v}}{\bb{H}^1}^2,
		\quad \forall \bff{v}\in \bb{H}^1;
	\end{align}
	\item If $\norm{\bff{\phi}}{\bb{L}^\infty}^2 \leq D_\ast/(2\beta D^\ast)$, then $\mathcal{A}_2(\bff{\phi}; \bff{\cdot},\bff{\cdot})$ is also coercive, i.e. there exist a constant $\mu_2>0$ \emph{independent} of $\bff{\phi}$, such that
	\begin{align}\label{equ:A2 coercive}
		\abs{\mathcal{A}_2(\bff{\phi};\bff{v},\bff{v})} 
		&\geq
		\mu_2 \norm{\bff{v}}{\bb{H}^1}^2,
		\quad \forall \bff{v}\in \bb{H}^1;
	\end{align}
	\item There exists a constant $C>0$ such that {for any $\bff{\phi}\in \bb{W}^{1,4}$, $\bff{v}\in \bb{H}^1$, and $\bff{w}\in \bb{H}^2_{\bff{n}}$},
	\begin{align}\label{equ:C ineq W14}
		\left|\mathcal{C}_1 (\bff{\phi}; \bff{v},\bff{w}) \right|
		&\leq
		C \norm{\bff{\phi}}{\bb{W}^{1,4}} \norm{\bff{v}}{\bb{L}^2} \norm{\bff{w}}{\bb{H}^2},
		\\
		\label{equ:C Delta w w}
		\left|\mathcal{C}_1 (\bff{\phi}; \Delta \bff{w},\bff{w}) \right|
		&\leq
		C \norm{\bff{\phi}}{\bb{W}^{1,4}} \norm{\bff{w}}{\bb{W}^{1,4}} \norm{\Delta \bff{w}}{\bb{L}^2};
	\end{align}
	\item There exists a constant $C>0$ such that {for any $D_0\in W^{1,4}(\Omega)$, $\bff{\phi},\bff{\psi} \in \bb{W}^{1,4}$, $\bff{v}\in \bb{H}^1$, and $\bff{w}\in \bb{H}^2_{\bff{n}}$},
	\begin{align}\label{equ:C2 ineq W14}
		\left|\mathcal{B}_2 (\bff{\phi}, \bff{\psi}; \bff{v},\bff{w}) \right|
		&\leq
		C \left( \norm{\bff{\phi}}{\bb{W}^{1,4}} \norm{\bff{\psi}}{\bb{L}^\infty} + \norm{\bff{\phi}}{\bb{L}^\infty} \norm{\bff{\psi}}{\bb{W}^{1,4}}  \right)  \norm{\bff{v}}{\bb{L}^2} \norm{\bff{w}}{\bb{H}^2};
	\end{align}
	\item {For any $\bff{\phi}\in \bb{L}^\infty$, $\bff{v}\in \bb{L}^2$,  and $\bff{w}\in \bb{H}^1$,}
	\begin{align}\label{equ: H1 H1 ineq}
		\abs{\mathcal{L}_2(\bff{\phi};\bff{v},\bff{w})}
		&\leq
		\norm{\bff{\phi}}{\bb{L}^\infty} \norm{\bff{v}}{\bb{L}^2} \norm{\nabla\bff{w}}{\bb{L}^2}.
	\end{align}
\end{enumerate}
\end{lemma}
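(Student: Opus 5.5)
We check the six items one at a time, using only H\"older's and Young's inequalities, the pointwise vector identities $(\bff{a}\times\bff{b})\cdot\bff{b}=0$ and $|\bff{a}\times\bff{b}|\le|\bff{a}||\bff{b}|$, the bounds $D_\ast\le D_0\le D^\ast$, and integration by parts with the Neumann condition built into $\bb{H}^2_{\bff{n}}$.

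For the boundedness (i), each term of $\mathcal{A}_i$ is estimated by H\"older:
\begin{itemize}
\item $\abs{\mathcal{D}_1(\bff{v},\bff{w})}\le\norm{\bff{v}}{\bb{H}^1}\norm{\bff{w}}{\bb{H}^1}$;
\item $\abs{\mathcal{C}_1(\bff{\phi};\bff{v},\bff{w})}\le\norm{\bff{\phi}}{\bb{L}^\infty}\norm{\nabla\bff{v}}{\bb{L}^2}\norm{\nabla\bff{w}}{\bb{L}^2}$;
\item $\abs{\mathcal{B}_1(\bff{\phi},\bff{\phi};\bff{v},\bff{w})}\le\norm{\bff{\phi}}{\bb{L}^\infty}^2\norm{\bff{v}}{\bb{L}^2}\norm{\bff{w}}{\bb{L}^2}$.
\end{itemize}
The terms of $\mathcal{A}_2$ are handled the same way, with an extra factor $D^\ast$, and the $\mathcal{B}_2$ term gives the factor $\beta D^\ast\norm{\bff{\phi}}{\bb{L}^\infty}^2$. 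This yields $\beta_1,\beta_2$ depending on $\norm{\bff{\phi}}{\bb{L}^\infty}$; the stated dependence on $\norm{\bff{\phi}}{\bb{H}^1}$ is harmless.

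For the coercivity of $\mathcal{A}_1$ in (ii), the cross-product term vanishes pointwise: $(\bff{\phi}\times\nabla\bff{v})\cdot\nabla\bff{v}=0$. The remaining term satisfies $\mathcal{B}_1(\bff{\phi},\bff{\phi};\bff{v},\bff{v})=\norm{|\bff{\phi}||\bff{v}|}{\bb{L}^2}^2\ge0$. Hence $\mathcal{A}_1(\bff{\phi};\bff{v},\bff{v})\ge\norm{\bff{v}}{\bb{H}^1}^2$, so $\mu_1=1$.

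For (iii):
\begin{itemize}
\item $\mathcal{C}_2(\bff{\phi};\bff{v},\bff{v})=0$ pointwise, since $(\bff{v}\times\bff{\phi})\cdot\bff{v}=0$.
\item $\mathcal{D}_2(\bff{v},\bff{v})\ge D_\ast\norm{\bff{v}}{\bb{H}^1}^2$.
\item $\abs{\mathcal{B}_2(\bff{\phi},\bff{\phi};\bff{v},\bff{v})}\le\beta D^\ast\norm{\bff{\phi}}{\bb{L}^\infty}^2\norm{\nabla\bff{v}}{\bb{L}^2}^2\le\frac{D_\ast}{2}\norm{\nabla\bff{v}}{\bb{L}^2}^2$, using the hypothesis on $\norm{\bff{\phi}}{\bb{L}^\infty}$.
\end{itemize}
Combining these gives $\mu_2=D_\ast/2$, independent of $\bff{\phi}$.

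The substantive parts are (iv) and (v). There $\bff{v}$ is only measured in $\bb{L}^2$, so all derivatives must be moved off $\bff{v}$ onto $\bff{w}$ and $\bff{\phi}$.

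For \eqref{equ:C ineq W14}, write $\mathcal{C}_1(\bff{\phi};\bff{v},\bff{w})=-\sum_i\inpro{\bff{\phi}\times\partial_i\bff{v}}{\partial_i\bff{w}}=\sum_i\inpro{\partial_i\bff{v}}{\bff{\phi}\times\partial_i\bff{w}}$. Integrate by parts in $x_i$. The boundary term is $\int_{\partial\Omega}\bff{v}\cdot(\bff{\phi}\times\partial_{\bff{n}}\bff{w})$, which vanishes because $\partial_{\bff{n}}\bff{w}=\bff{0}$. This leaves
\[
-\inpro{\bff{v}}{\bff{\phi}\times\Delta\bff{w}}-\sum_i\inpro{\bff{v}}{\partial_i\bff{\phi}\times\partial_i\bff{w}}.
\]
By H\"older with exponents $(2,\infty,2)$ and $(2,4,4)$, and the embeddings $\bb{W}^{1,4}\hookrightarrow\bb{L}^\infty$ and $\bb{H}^2\hookrightarrow\bb{W}^{1,4}$ (valid for $d\le3$), this is at most $C\norm{\bff{\phi}}{\bb{W}^{1,4}}\norm{\bff{v}}{\bb{L}^2}\norm{\bff{w}}{\bb{H}^2}$.

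Estimate \eqref{equ:C Delta w w} then follows from the same identity with $\bff{v}=\Delta\bff{w}$. The term $\inpro{\Delta\bff{w}}{\bff{\phi}\times\Delta\bff{w}}$ vanishes pointwise, and the remaining term is bounded by $\norm{\nabla\bff{\phi}}{\bb{L}^4}\norm{\nabla\bff{w}}{\bb{L}^4}\norm{\Delta\bff{w}}{\bb{L}^2}$. The step I expect to need most care is this integration by parts when $\bff{v}=\Delta\bff{w}$ is only in $\bb{L}^2$. I would first prove the identity for smooth $\bff{v}$ and then extend it by density, since both sides are continuous in $\bff{v}\in\bb{L}^2$ for fixed $\bff{w}\in\bb{H}^2_{\bff{n}}$.

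For (v), integrate by parts in the same way. The boundary term $\int_{\partial\Omega} D_0(\bff{\phi}\otimes\bff{\psi})\,\cdots$ contracts against $\partial_{\bff{n}}\bff{w}$ after transposition, so it is killed by the Neumann condition. Expanding the divergence then gives terms with integrands
\[
\nabla D_0\,|\bff{\phi}||\bff{\psi}||\bff{v}||\nabla\bff{w}|,\qquad
|\nabla\bff{\phi}||\bff{\psi}||\bff{v}||\nabla\bff{w}|,\qquad
|\bff{\phi}||\nabla\bff{\psi}||\bff{v}||\nabla\bff{w}|,\qquad
|\bff{\phi}||\bff{\psi}||\bff{v}||D^2\bff{w}|.
\]
Each is bounded with $\bff{v}\in\bb{L}^2$, $\nabla\bff{w}\in\bb{L}^4$ or $D^2\bff{w}\in\bb{L}^2$, and $\nabla\bff{\phi},\nabla\bff{\psi},\nabla D_0\in L^4$. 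Here $D_0\in W^{1,4}\cap L^\infty$, and the $\nabla D_0$ term is absorbed using $\norm{\bff{\phi}}{\bb{L}^\infty}\norm{\bff{\psi}}{\bb{L}^\infty}\lesssim\norm{\bff{\phi}}{\bb{W}^{1,4}}\norm{\bff{\psi}}{\bb{L}^\infty}$.

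Finally, (vi) is immediate from H\"older, since $\abs{\bff{v}\otimes\bff{\phi}}\le\abs{\bff{v}}\abs{\bff{\phi}}$.
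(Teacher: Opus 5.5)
Your proposal is correct and follows the paper's proof essentially step by step: H\"older for (i) and (vi); the pointwise orthogonality $(\bff{\phi}\times\partial_i\bff{v})\cdot\partial_i\bff{v}=0$ and $(\bff{v}\times\bff{\phi})\cdot\bff{v}=0$ for the coercivity in (ii)--(iii), with $\mu_1=1$ and $\mu_2=D_\ast/2$; and for (iv)--(v) a single integration by parts whose boundary term vanishes because $\partial_{\bff{n}}\bff{w}=\bff{0}$, which gives exactly the paper's identity $\mathcal{C}_1(\bff{\phi};\bff{v},\bff{w})=\inpro{\bff{\phi}\times\bff{v}}{\Delta\bff{w}}+\inpro{\nabla\bff{\phi}\times\bff{v}}{\nabla\bff{w}}$ and its $\mathcal{B}_2$ analogue, followed by H\"older and $\bb{H}^2\hookrightarrow\bb{W}^{1,4}\hookrightarrow\bb{L}^\infty$. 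One small correction to your density remark: the left-hand side $-\inpro{\bff{\phi}\times\nabla\bff{v}}{\nabla\bff{w}}$ is not defined, let alone continuous, for $\bff{v}\in\bb{L}^2$, so for $\bff{w}\in\bb{H}^2_{\bff{n}}$ you must either take $\mathcal{C}_1(\bff{\phi};\Delta\bff{w},\bff{w})$ to be defined by the integrated-by-parts expression or assume $\Delta\bff{w}\in\bb{H}^1$, as holds in the Galerkin setting of Lemma~\ref{lem:dual exist} where the paper actually uses this estimate.
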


\begin{proof}
Inequalities \eqref{equ:A1 bdd} and \eqref{equ:A2 bdd} follow immediately by H\"older's inequality, while inequality \eqref{equ:A1 coercive} is obvious. Finally, we have
\begin{align*}
	\mathcal{A}_2(\bff{\phi};\bff{v},\bff{v})
	&=
	\inpro{D_0 \nabla \bff{v}}{\nabla \bff{v}}
	+
	\inpro{D_0 \bff{v}}{\bff{v}}
	-
	\beta \inpro{D_0 (\bff{\phi}\otimes \bff{\phi}) \nabla \bff{v}}{\nabla\bff{v}}
	\\
	&\geq
	D_\ast \norm{\bff{v}}{\bb{L}^2}^2
	+
	\big(D_\ast -\beta D^\ast \norm{\bff{\phi}}{\bb{L}^\infty}^2 \big) \norm{\nabla \bff{v}}{\bb{L}^2}^2
	\geq
	\mu_2 \norm{\bff{v}}{\bb{H}^1}^2,
\end{align*}
where $\mu_2:= D_\ast/2$, thus showing~\eqref{equ:A2 coercive}. 
Next, we show~\eqref{equ:C ineq W14} and~\eqref{equ:C Delta w w}. Integrating by parts, we infer that
\begin{align*}
	\mathcal{C}_1 (\bff{\phi}; \bff{v},\bff{w})
	&=
	\inpro{\bff{\phi}\times \bff{v}}{\Delta \bff{w}}
	+
	\inpro{\nabla \bff{\phi}\times \bff{v}}{\nabla \bff{w}}.
\end{align*}
By H\"older's inequality, we then have
\begin{align*}
	\abs{\mathcal{C}_1 (\bff{\phi}; \bff{v},\bff{w})}
	&\leq
	\norm{\bff{\phi}}{\bb{L}^\infty} \norm{\bff{v}}{\bb{L}^2} \norm{\Delta \bff{w}}{\bb{L}^2}
	+
	\norm{\nabla\bff{\phi}}{\bb{L}^4} \norm{\bff{v}}{\bb{L}^2} \norm{\nabla\bff{w}}{\bb{L}^4},
\end{align*}
from which~\eqref{equ:C ineq W14} follows by the embeddings $\bb{H}^2\hookrightarrow \bb{W}^{1,4} \hookrightarrow \bb{L}^\infty$.
Inequality~\eqref{equ:C Delta w w} follows by similar argument.
Next, using integration by parts, we also have
\begin{align*}
	\mathcal{B}_2(\bff{\phi}, \bff{\psi}; \bff{v},\bff{w})
	=
	\beta \inpro{D_0 (\bff{\phi}\otimes \bff{\psi})\bff{v}}{\Delta \bff{w}}
	+
	\beta \inpro{\bff{v} \nabla \big(D_0(\bff{\phi}\otimes \bff{\psi})\big)^\top}{\nabla \bff{w}}.
\end{align*}
Therefore, by H\"older's inequality,
\begin{align*}
	\abs{\mathcal{B}_2(\bff{\phi}, \bff{\psi}; \bff{v},\bff{w})}
	&\leq
	\beta D^\ast \norm{\bff{\phi}}{\bb{L}^\infty} \norm{\bff{\psi}}{\bb{L}^\infty} \norm{\bff{v}}{\bb{L}^2} \norm{\Delta \bff{w}}{\bb{L}^2}
	\\
	&\quad
	+
	\beta \norm{\bff{v}}{\bb{L}^2} \norm{D_0(\bff{\phi}\otimes \bff{\psi})}{\bb{W}^{1,4}} \norm{\nabla \bff{w}}{\bb{L}^4},
\end{align*}
which implies~\eqref{equ:C2 ineq W14} by the assumptions and the embeddings $\bb{H}^2\hookrightarrow \bb{W}^{1,4} \hookrightarrow \bb{L}^\infty$.
Finally, {by H\"older's inequality we have
\begin{align*}
	\abs{\mathcal{L}_2(\bff{\phi};\bff{v},\bff{w})}
	&\leq
	\norm{\bff{v}\otimes \bff{\phi}}{\bb{L}^2} \norm{\nabla \bff{w}}{\bb{L}^2}
	\leq
	\norm{\bff{\phi}}{\bb{L}^\infty} \norm{\bff{v}}{\bb{L}^2} \norm{\nabla\bff{w}}{\bb{L}^2},
\end{align*}
showing \eqref{equ: H1 H1 ineq},} thus completing the proof of the lemma.
\end{proof}

To aid in the analysis, we next introduce the following elliptic projection operators, a technique originating in~\cite{Whe73}.

\begin{definition}\label{def:elliptic proj}
Let $t\in [0,T]$. Let $(\bff{m},\bff{s})$ be a strong solution to \eqref{equ:sdllb} in the sense of Definition~\ref{def:strong}, with initial data $(\bff{m}_0,\bff{s}_0)$ satisfying
\begin{equation}\label{equ:small init ellip}
	\norm{\bff{m}_0}{\bb{L}^\infty}^2 \leq D_\ast/(2\beta D^\ast).
\end{equation}
{The $\mathcal{A}_1$-elliptic projection operator $\Pi_h :=\Pi_{h,
	\bff{m}(t)} : \bb{H}^1 \to \bb{V}_h$ is defined by
\begin{align}\label{equ:A1 proj}
	\bff{v}\mapsto \Pi_h\bff{v}
	\quad\text{satisfying}\quad 
	\mathcal{A}_1(\bff{m}(t); \bff{v}- \Pi_h \bff{v}, \bff{\phi}_h)=0, \quad \forall \bff{\phi}_h\in \bb{V}_h.
\end{align}
The $\mathcal{A}_2$-elliptic projection operator $\Lambda_h :=\Lambda_{h,
	\bff{m}(t)} : \bb{H}^1 \to \bb{V}_h$ is defined by
\begin{align}\label{equ:A2 proj}
	\bff{v}\mapsto \Lambda_h\bff{v}
	\quad\text{satisfying}\quad 
	\mathcal{A}_2(\bff{m}(t); \bff{v}- \Lambda_h \bff{v}, \bff{\psi}_h)=0, \quad \forall \bff{\psi}_h\in \bb{V}_h.
\end{align}
}
\end{definition}

Note that the bilinear forms defining {$\Pi_h\bff{v}$ and $\Lambda_h \bff{v}$} in \eqref{equ:A1 proj} and \eqref{equ:A2 proj} depend on the given exact solution $(\bff{m},\bff{s})$ of the problem~\eqref{equ:sdllb}. In light of the regularity assumption on $(\bff{m},\bff{s})$, estimate \eqref{equ:m less m0} and the assumption \eqref{equ:small init ellip}, as well as the coercivity and boundedness properties \eqref{equ:A1 bdd}, \eqref{equ:A2 bdd}, \eqref{equ:A1 coercive}, and \eqref{equ:A2 coercive}, the elliptic projections are well-defined by the {Lax--Milgram theorem}.

{Next, for any $\bff{v}\in L_T^\infty(\bb{H}^1)$}, let $\bff{\rho}$ and $\bff{\eta}$ be defined by
\begin{align}\label{equ:rho eta def}
	\bff{\rho}(t) := \Pi_h \bff{v}(t)- \bff{v}(t)
	\quad \text{and} \quad
	\bff{\eta}(t) := \Lambda_h \bff{v}(t)- \bff{v}(t).
\end{align}
{Then}
\begin{align}\label{equ:elliptic proj}
	\mathcal{A}_1(\bff{m}(t); \bff{\rho}(t), \bff{\phi}_h)
	=
	\mathcal{A}_2(\bff{m}(t); \bff{\eta}(t), \bff{\psi}_h)
	=
	0, \quad \forall \bff{\phi}_h, \bff{\psi}_h\in \bb{V}_h.
\end{align}
Some estimates on $\bff{\rho}$ and $\bff{\eta}$ are derived below.

\begin{proposition}\label{pro:est rho}
	Let $(\bff{m},\bff{s})$ be the solution to~\eqref{equ:sdllb} with regularity given by \eqref{equ:assum}, and initial data satisfying \eqref{equ:small init ellip}.
	Let $\bff{\rho}$ and $\bff{\eta}$ be as defined in \eqref{equ:rho eta def}, {and suppose that $\bff{v}\in L^\infty_T(\bb{H}^{r+1})$.} Then there exists a constant $C>0$ such that for any $t\in [0,T]$,
	\begin{align}
		\label{equ:rho t est}
		\norm{\bff{\rho}(t)}{\bb{L}^2}
		+
		h \norm{\nabla \bff{\rho}(t)}{\bb{L}^2}
		&\leq
		{C \norm{\bff{v}}{L^\infty_T(\bb{H}^{r+1})} h^{r+1},}
		\\
		\label{equ:eta t est}
		\norm{\bff{\eta}(t)}{\bb{L}^2}
		+
		h \norm{\nabla \bff{\eta}(t)}{\bb{L}^2}
		&\leq
		{C \norm{\bff{v}}{L^\infty_T(\bb{H}^{r+1})} h^{r+1},}
	\end{align}
	where the constant $C$ is independent of $h$, but may depend on $T$ {and $\norm{\bff{m}}{L^\infty_T(\bb{H}^2)}$}.
	
	Furthermore, suppose that $\bff{v}\in L^\infty_T(\bb{H}^{r+1}) \cap H^1_T(\bb{H}^2)$, then there exists a constant $C>0$ such that for any $t\in [0,T]$,
	\begin{align}
		\label{equ:dt rho t est}
		\norm{\partial_t \bff{\rho}(t)}{\bb{L}^2}
		+
		h \norm{\nabla \partial_t \bff{\rho}(t)}{\bb{L}^2}
		&\leq
		{C\big(1+\norm{\bff{v}}{L^\infty_T(\bb{H}^{r+1})}\big) h^{r+1},}
		\\
		\label{equ:dt eta t est}
		\norm{\partial_t \bff{\eta}(t)}{\bb{L}^2}
		+
		h \norm{\nabla \partial_t \bff{\eta}(t)}{\bb{L}^2}
		&\leq
		{C\big(1+\norm{\bff{v}}{L^\infty_T(\bb{H}^{r+1})}\big) h^{r+1},}
	\end{align}
	where the constant $C$ is independent of $h$, but may depend on $T$ {and $\norm{\bff{m}}{W^{1,\infty}_T(\bb{H}^{r+1})}$}.
\end{proposition}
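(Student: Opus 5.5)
The plan follows the standard Wheeler argument: Céa's lemma for the $\bb{H}^1$ bound, Aubin--Nitsche duality for the $\bb{L}^2$ bound, and then the same two steps applied to the time-differentiated projection identity. I treat $\bff{\rho}$ in detail; $\bff{\eta}$ is identical with $\mathcal{A}_2$ in place of $\mathcal{A}_1$.

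\emph{Energy estimate.} Coercivity of $\mathcal{A}_2$ holds by \eqref{equ:small init ellip}, \eqref{equ:m less m0} and Lemma~\ref{lem:coer bdd A}(iii). By Galerkin orthogonality \eqref{equ:elliptic proj}, for any $\bff{\chi}\in\bb{V}_h$,
\[
\mu_1\norm{\bff{\rho}}{\bb{H}^1}^2\le \mathcal{A}_1(\bff{m};\bff{\rho},\bff{\rho})=\mathcal{A}_1(\bff{m};\bff{\rho},\bff{\chi}-\bff{v})\le \beta_1\norm{\bff{\rho}}{\bb{H}^1}\norm{\bff{\chi}-\bff{v}}{\bb{H}^1}.
\]
Here $\beta_1$ depends only on $\norm{\bff{m}}{L^\infty_T(\bb{H}^2)}$. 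Together with \eqref{equ:fin approx} this gives $\norm{\bff{\rho}}{\bb{H}^1}\le Ch^r\norm{\bff{v}}{\bb{H}^{r+1}}$.

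\emph{Duality.} Since $\mathcal{A}_1$ is nonsymmetric (because of $\mathcal{C}_1$), I use the adjoint problem: find $\bff{w}\in\bb{H}^1$ with
\[
\mathcal{A}_1(\bff{m};\bff{\phi},\bff{w})=\inpro{\bff{\rho}}{\bff{\phi}}\quad\text{for all }\bff{\phi}\in\bb{H}^1.
\]
Formally this is $-\Delta\bff{w}+\nabla\cdot(\ldots)+\bff{w}+|\bff{m}|^2\bff{w}=\bff{\rho}$ with Neumann data. Its principal part is $-\Delta\bff{w}$ plus a first-order cross-product term coming from $\mathcal{C}_1$, namely $\nabla\bff{m}\times\nabla\bff{w}$ together with the term $\bff{m}\times\Delta\bff{w}$. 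The latter is skew, so $-\Delta\bff{w}+\bff{m}\times\Delta\bff{w}=(I+M)(-\Delta\bff{w})$, where $M$ is the skew matrix of $\bff{m}$. The matrix $I+M$ is invertible with uniformly bounded inverse, so the operator is still elliptic. For a smooth or convex domain I therefore expect the $\bb{H}^2$-regularity bound $\norm{\bff{w}}{\bb{H}^2}\le C\norm{\bff{\rho}}{\bb{L}^2}$, with $C$ depending on $\norm{\bff{m}}{\bb{W}^{1,4}}\lesssim\norm{\bff{m}}{\bb{H}^2}$. I would show it by testing with $-\Delta\bff{w}$, using \eqref{equ:C Delta w w} and an interpolation/Young argument, followed by Neumann elliptic regularity. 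Then
\[
\norm{\bff{\rho}}{\bb{L}^2}^2=\mathcal{A}_1(\bff{m};\bff{\rho},\bff{w}-\bff{\chi})\le C\norm{\bff{\rho}}{\bb{H}^1}h\norm{\bff{w}}{\bb{H}^2},
\]
which yields \eqref{equ:rho t est}. For $\bff{\eta}$, the adjoint operator has a principal part of divergence form with matrix $D_0(I-\beta\,\bff{m}\otimes\bff{m})$, which is positive definite under \eqref{equ:small init ellip}. Its $\bb{H}^2$ regularity uses $D_0\in W^{1,\infty}$ and $\bff{m}\in\bb{W}^{1,4}$, via \eqref{equ:C2 ineq W14}.

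\emph{Time derivatives.} Differentiating \eqref{equ:elliptic proj} in $t$ gives
\[
\mathcal{A}_1(\bff{m};\partial_t\bff{\rho},\bff{\phi}_h)=-\mathcal{C}_1(\partial_t\bff{m};\bff{\rho},\bff{\phi}_h)-2\mathcal{B}_1(\bff{m},\partial_t\bff{m};\bff{\rho},\bff{\phi}_h).
\]
Write $\partial_t\bff{\rho}=(\Pi_h\partial_t\bff{v}-\partial_t\bff{v})+\bff{\theta}_h$ with $\bff{\theta}_h\in\bb{V}_h$. The first piece is bounded by the previous step applied to $\partial_t\bff{v}$; this requires $\partial_t\bff{v}\in\bb{H}^{r+1}$, which is where the assumed regularity is used. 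Then $\bff{\theta}_h$ satisfies
\[
\mathcal{A}_1(\bff{m};\bff{\theta}_h,\bff{\phi}_h)=-\mathcal{C}_1(\partial_t\bff{m};\bff{\rho},\bff{\phi}_h)-2\mathcal{B}_1(\bff{m},\partial_t\bff{m};\bff{\rho},\bff{\phi}_h).
\]
Taking $\bff{\phi}_h=\bff{\theta}_h$ and using $\norm{\partial_t\bff{m}}{\bb{L}^\infty}\lesssim\norm{\partial_t\bff{m}}{\bb{H}^{2}}$ gives $\norm{\bff{\theta}_h}{\bb{H}^1}\le C\norm{\bff{\rho}}{\bb{H}^1}=O(h^r)$.

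For the $\bb{L}^2$ bound on $\bff{\theta}_h$ I repeat the duality argument with the same adjoint problem, now with right-hand side $\bff{\theta}_h$. The data terms $\mathcal{C}_1(\partial_t\bff{m};\bff{\rho},\bff{w})$ and $\mathcal{B}_1(\ldots;\bff{\rho},\bff{w})$ are bounded via \eqref{equ:C ineq W14} by $C\norm{\partial_t\bff{m}}{\bb{W}^{1,4}}\norm{\bff{\rho}}{\bb{L}^2}\norm{\bff{w}}{\bb{H}^2}$. This is $O(h^{r+1})$, and it is the place where $\norm{\bff{m}}{W^{1,\infty}_T(\bb{H}^{r+1})}$ enters. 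The remaining term $\mathcal{A}_1(\bff{m};\bff{\theta}_h,\bff{w}-\bff{\chi})$ is handled as before. The argument for $\partial_t\bff{\eta}$ is analogous, using \eqref{equ:C2 ineq W14} for the differentiated $\mathcal{B}_2$ terms.

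\emph{Main obstacle.} The step I expect to be hardest is the uniform $\bb{H}^2$-regularity of the nonsymmetric adjoint problems, with constants controlled only by $\norm{\bff{m}}{L^\infty_T(\bb{H}^2)}$ and the smallness condition. If the direct energy argument is awkward, I would instead multiply the adjoint equation by $(I+M)^{-1}$, which reduces it to a Neumann Laplacian with an $\bb{L}^2$ right-hand side, and then invoke standard elliptic regularity.
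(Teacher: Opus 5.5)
Your proposal is correct. For \eqref{equ:rho t est}--\eqref{equ:eta t est} it is the paper's argument: C\'ea's lemma, then Aubin--Nitsche with the adjoint problem. The paper proves the $\bb{H}^2_{\bff{n}}$-regularity of that adjoint problem (Lemma~\ref{lem:dual exist}) exactly by your primary route: testing with $-\Delta\bff{\psi}$, using \eqref{equ:C Delta w w}, Gagliardo--Nirenberg and Young, then Neumann elliptic regularity, justified through a Galerkin approximation. You do not need the $(I+M)^{-1}$ fallback. It would not be immediate in any case: for $\bff{w}\in\bb{H}^1$ the term $\nabla\bff{m}\times\nabla\bff{w}$ lies only in $\bb{L}^{3/2}$, so an extra $\bb{W}^{2,3/2}$ bootstrap would be needed.

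For \eqref{equ:dt rho t est}--\eqref{equ:dt eta t est} you take a genuinely different, classical Wheeler-type route.

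\emph{Your route.} You split $\partial_t\bff{\rho}=(\Pi_h\partial_t\bff{v}-\partial_t\bff{v})+\bff{\theta}_h$ with $\bff{\theta}_h=\partial_t\Pi_h\bff{v}-\Pi_h\partial_t\bff{v}\in\bb{V}_h$. The first piece is covered by the estimate already proved, applied to $\partial_t\bff{v}$. Only the discrete remainder needs new work; it solves a Galerkin problem with data $-\mathcal{C}_1(\partial_t\bff{m};\bff{\rho},\cdot)-2\mathcal{B}_1(\bff{m},\partial_t\bff{m};\bff{\rho},\cdot)$ of the size of $\bff{\rho}$.

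\emph{The paper's route.} It works with $\partial_t\bff{\rho}$ itself. It tests coercivity with $\partial_t\bff{\rho}$ and splits the second slot as $P_h\partial_t\bff{\rho}+(P_h\partial_t\bff{v}-\partial_t\bff{v})$. This step needs the $\bb{H}^1$-stability \eqref{equ:Ph H1 stab} of the $\bb{L}^2$ projection, in \eqref{equ:A dt leq 1}. It then runs duality directly on $\partial_t\bff{\rho}$ using the differentiated orthogonality \eqref{equ:A dt eta}, as in \eqref{equ:dt rho abc}.

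\emph{Comparison.} Your version avoids \eqref{equ:Ph H1 stab}, since you test with $\bff{\theta}_h\in\bb{V}_h$ directly. It reuses the first half verbatim and makes the dependence on $\norm{\partial_t\bff{v}}{L^\infty_T(\bb{H}^{r+1})}$ explicit. The paper's version never introduces $\Pi_h\partial_t\bff{v}$.

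One bookkeeping point in your duality step for $\bff{\theta}_h$. Replacing the discrete test function $\bff{\chi}$ by $\bff{w}$ in the data terms also produces $\mathcal{C}_1(\partial_t\bff{m};\bff{\rho},\bff{w}-\bff{\chi})$ and $\mathcal{B}_1(\bff{m},\partial_t\bff{m};\bff{\rho},\bff{w}-\bff{\chi})$. These are harmless, of size $O(h^{r+1})\norm{\bff{w}}{\bb{H}^2}$, exactly as in \eqref{equ:dt rho abc}.

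Finally, both routes need $\partial_t\bff{v}\in\bb{H}^{r+1}$ uniformly in $t$, which is more than the stated $H^1_T(\bb{H}^2)$. The paper uses this tacitly in \eqref{equ:A dt leq 2}, so you were right to flag it.
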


\begin{proof}
	Note that by~\eqref{equ:u Lp}, we have~$\norm{\bff{m}}{L^\infty_T(\bb{L}^\infty)}^2 \leq D_\ast/(2\beta D^\ast)$.
	First, we prove~\eqref{equ:rho t est}.
	For all $\bff{\chi}\in \bb{V}_h$, by the coercivity and the boundedness of $\mathcal{A}_1$ in Lemma~\ref{lem:coer bdd A}, and the definition of $\bff{\rho}$,
	\begin{align*}
		\mu_1 \norm{\bff{\rho}(t)}{\bb{H}^1}^2
		&\leq
		\mathcal{A}_1(\bff{m}(t); \bff{\rho}(t), \bff{\rho}(t))
		\\
		&=
		\mathcal{A}_1(\bff{m}(t); \bff{\rho}(t), \Pi_h\bff{v}(t)- \bff{\chi})
		-
		\mathcal{A}_1(\bff{m}(t); \bff{\rho}(t), \bff{v}(t)-\bff{\chi})
		\\
		&\leq
		\beta_1 \norm{\bff{\rho}(t)}{\bb{H}^1} \norm{\bff{v}(t)-\bff{\chi}}{\bb{H}^1},
	\end{align*}
	since the first term in the second step is zero. Here, $\beta_1$ depends on $\norm{\bff{m}}{L^\infty_T(\bb{H}^2)}$. Therefore, by~\eqref{equ:fin approx},
	\begin{align}\label{equ:eta H1}
		\norm{\bff{\rho}(t)}{\bb{H}^1}
		\leq
		(\beta_1/\mu_1) \inf_{\bff{\chi}\in \bb{V}_h} \norm{\bff{v}(t)-\bff{\chi}}{\bb{H}^1}
		\leq
		Ch^r \norm{\bff{v}}{L^\infty_T(\bb{H}^{r+1})},
	\end{align}
	where $C$ depends on $\norm{\bff{m}}{L^\infty_T(\bb{H}^2)}$.
	To show the $\bb{L}^2$-estimate, we use a duality argument. For each $\bff{m}(t)\in \bb{H}^2$, let $\bff{\psi}(t)\in \bb{H}^2_{\bff{n}}$ satisfy
	\begin{align}\label{equ:A u zeta}
		\mathcal{A}_1(\bff{m}(t); \bff{\zeta}, \bff{\psi}(t)) = \inpro{\bff{\rho}(t)}{\bff{\zeta}},
		\quad \forall \bff{\zeta}\in \bb{H}^1.
	\end{align}
	For any $t\in [0,T]$, such $\bff{\psi}(t)$ exists by Lemma~\ref{lem:dual exist} under the assumed conditions on $\Omega$. Moreover,
	\begin{align}\label{equ:psi rho}
		\norm{\bff{\psi}(t)}{\bb{H}^2} \leq
		C\norm{\bff{\rho}(t)}{\bb{L}^2},
	\end{align}
	where $C$ depends on $\norm{\bff{m}}{L^\infty_T(\bb{H}^2)}$.
	Therefore, taking $\bff{\zeta}= \bff{\rho}(t)$ in~\eqref{equ:A u zeta} and noting~\eqref{equ:elliptic proj}, we have for all $\bff{\chi}\in \bb{V}_h$,
	\begin{align*}
		\norm{\bff{\rho}(t)}{\bb{L}^2}^2
		&=
		\mathcal{A}_1(\bff{m}(t); \bff{\rho}(t), \bff{\psi}(t))
		=
		\mathcal{A}_1(\bff{m}(t); \bff{\rho}(t), \bff{\psi}(t)-\bff{\chi})
		\\
		&\leq
		\beta_1 \norm{\bff{\rho}(t)}{\bb{H}^1} \inf_{\bff{\chi}\in \bb{V}_h} \norm{\bff{\psi}(t)-\bff{\chi}}{\bb{H}^1}
		\leq
		{Ch^{r+1} \norm{\bff{v}}{L^\infty_T(\bb{H}^{r+1})} \norm{\bff{\rho}(t)}{\bb{L}^2},}
	\end{align*}
	{where $C$ depends on $\norm{\bff{m}}{L^\infty_T(\bb{H}^2)}$}, and in the last step we used~\eqref{equ:eta H1}, \eqref{equ:fin approx}, and~\eqref{equ:psi rho}. {This then implies~\eqref{equ:rho t est}.} The proof of~\eqref{equ:eta t est} follows in a similar manner.
	
	Next, we show~\eqref{equ:dt rho t est} for $\bff{v}\in
	L^\infty_T(\bb{H}^{r+1}) \cap H^1_T(\bb{H}^2)$. For ease of
	presentation, we will omit the dependence of the functions on $t$.
	{By using successively the coercivity of $\mathcal{A}_1$, the
		definition of $\bff{\rho}$ in \eqref{equ:rho eta def}, and the
		fact that $P_h \partial_t \Pi_h \bff{v}= \partial_t \Pi_h
		\bff{v}$, with $P_h$ being the $\bb{L}^2$~projection operator
	onto $\bb{V}_h$, we obtain}
	\begin{align}\label{equ:leq A dt eta}
		\mu_1 \norm{\partial_t \bff{\rho}}{\bb{H}^1}^2
		\leq
		\mathcal{A}_1(\bff{m}; \partial_t \bff{\rho}, \partial_t \bff{\rho})
		&=
		\mathcal{A}_1\left(\bff{m}; \partial_t \bff{\rho}, P_h(\partial_t \bff{\rho})\right) 
		+
		\mathcal{A}_1 \left(\bff{m}; \partial_t \bff{\rho}, \partial_t \bff{\rho}- P_h(\partial_t \bff{\rho})\right)
		\nonumber\\
		&=
		{\mathcal{A}_1\left(\bff{m}; \partial_t \bff{\rho}, P_h(\partial_t \bff{\rho})\right) 
		+
		{\mathcal{A}_1 \left(\bff{m}; \partial_t \bff{\rho}, \partial_t \Pi_h \bff{v}(t)- P_h \partial_t \Pi_h \bff{v}(t) \right)}
		}
		\nonumber\\
		&\quad
		+
		{\mathcal{A}_1 \left(\bff{m}; \partial_t \bff{\rho},
		P_h(\partial_t \bff{v})-\partial_t \bff{v}\right)}
		\nonumber\\
		&=
		{\mathcal{A}_1\left(\bff{m}; \partial_t \bff{\rho}, P_h(\partial_t \bff{\rho})\right) 
		+
		\mathcal{A}_1\left(\bff{m}; \partial_t \bff{\rho}, P_h(\partial_t \bff{v}) -\partial_t \bff{v}\right).}
	\end{align}
	We will estimate each term on the last line of \eqref{equ:leq A dt eta}. To this end, noting~\eqref{equ:bilinear A} and differentiating~\eqref{equ:elliptic proj} with respect to $t$, we have for all $\bff{\phi}_h \in \bb{V}_h$,
	\begin{align}\label{equ:A dt eta}
		\mathcal{A}_1 (\bff{m}; \partial_t \bff{\rho},\bff{\phi}_h)
		+
		2 \mathcal{B}_1(\bff{m},\partial_t \bff{m}; \bff{\rho}, \bff{\phi}_h)
		+
		\mathcal{C}_1(\partial_t \bff{m}; \bff{\rho}, \bff{\phi}_h)
		= 0.
	\end{align}
	Thus, for the first term on the right-hand side of~\eqref{equ:leq A dt eta}, by the boundedness of $\mathcal{B}_1, \mathcal{C}_1$, and $P_h$ we obtain
	\begin{align}\label{equ:A dt leq 1}
		\big| \mathcal{A}_1(\bff{m}; \partial_t \bff{\rho}, P_h(\partial_t \bff{\rho})) \big| 
		&\leq
		\big| 2 \mathcal{B}_1(\bff{m},\partial_t \bff{m}; \bff{\rho}, P_h(\partial_t \bff{\rho})) \big|
		+
		\big| \mathcal{C}_1(\partial_t \bff{m}; \bff{\rho}, P_h(\partial_t \bff{\rho})) \big|
		\nonumber\\
		&\leq
		C \norm{\bff{\rho}}{\bb{H}^1} \norm{P_h (\partial_t \bff{\rho})}{\bb{H}^1}
		\leq
		Ch^r \norm{\bff{v}}{L^\infty_T(\bb{H}^{r+1})} \norm{\partial_t \bff{\rho}}{\bb{H}^1},
	\end{align}
	{where $C$ depends on $\norm{\bff{m}}{W^{1,\infty}_T(\bb{H}^2)}$},
	and in the last step we also used~\eqref{equ:rho t est}
	and~\eqref{equ:Ph H1 stab}. For the second term on the right-hand side
	of~\eqref{equ:leq A dt eta}, by the boundedness of $\mathcal{A}_1$ and \eqref{equ:Ph approx} we have
	\begin{align}\label{equ:A dt leq 2}
		\big| \mathcal{A}_1(\bff{m}; \partial_t \bff{\rho}, P_h(\partial_t \bff{v})- \partial_t \bff{v}) \big| 
		\leq
		C \norm{\partial_t \bff{\rho}}{\bb{H}^1} \norm{P_h(\partial_t \bff{v})- \partial_t \bff{v}}{\bb{H}^1}
		\leq
		Ch^r \norm{\partial_t \bff{\rho}}{\bb{H}^1},
	\end{align}
	{where $C$ depends on $\norm{\bff{m}}{W^{1,\infty}_T(\bb{H}^{r+1})}$.}
	The estimates~\eqref{equ:A dt leq 1} and \eqref{equ:A dt leq 2}, together with~\eqref{equ:leq A dt eta} imply
	\begin{align}\label{equ:dt eta H1}
		\norm{\partial_t \bff{\rho}(t)}{\bb{H}^1}
		\leq
		C\big(1+\norm{\bff{v}}{L^\infty_T(\bb{H}^{r+1})}\big) h^r.
	\end{align}
	To estimate $\norm{\partial_t \bff{\rho}}{\bb{L}^2}$, we use duality argument as before. For each $\bff{m}(t) \in \bb{H}^2$, let $\bff{\psi}(t)\in \bb{H}^2_{\bff{n}}$ satisfy
	\begin{align}\label{equ:A pa t zeta}
		\mathcal{A}_1(\bff{m}(t); \bff{\zeta}, \bff{\psi}(t)) = \inpro{\partial_t \bff{\rho}(t)}{\bff{\zeta}}, \quad \forall \bff{\zeta}\in \bb{H}^1. 
	\end{align}
	The existence of $\bff{\psi}(t)$ is given by Lemma~\ref{lem:dual exist} (to be proven after the conclusion of this proof), and furthermore we have
	\begin{align}\label{equ:psi dt eta}
		\norm{\bff{\psi}(t)}{\bb{H}^2} \leq C \norm{\partial_t \bff{\rho}(t)}{\bb{L}^2},
	\end{align}
	where $C$ depends on $\norm{\bff{m}}{L^\infty_T(\bb{H}^2)}$.
	Taking $\bff{\zeta}= \partial_t \bff{\rho}(t)$ in~\eqref{equ:A pa t zeta}, we have
	\begin{align*}
		\mathcal{A}_1(\bff{m}(t); \partial_t \bff{\rho}(t), \bff{\psi}(t))
		=
		\norm{\partial_t \bff{\rho}(t)}{\bb{L}^2}^2.
	\end{align*}
	This equation and~\eqref{equ:A dt eta} yield for all $\bff{\phi}_h\in \bb{V}_h$,
	\begin{align}\label{equ:dt rho abc}
		\norm{\partial_t \bff{\rho}}{\bb{L}^2}^2
		&=
		\mathcal{A}_1 (\bff{m}; \partial_t \bff{\rho},\bff{\psi}-\bff{\phi}_h)
		-
		2 \mathcal{B}_1(\bff{m},\partial_t \bff{m}; \bff{\rho}, \bff{\phi}_h)
		-
		\mathcal{C}_1(\partial_t \bff{m}; \bff{\rho}, \bff{\phi}_h)
		\nonumber\\
		&=
		\mathcal{A}_1 (\bff{m}; \partial_t \bff{\rho},\bff{\psi}-\bff{\phi}_h)
		+
		2\mathcal{B}_1 (\bff{m},\partial_t \bff{m}; \bff{\rho}, \bff{\psi}-\bff{\phi}_h)
		-
		2 \mathcal{B}_1(\bff{m},\partial_t \bff{m}; \bff{\rho}, \bff{\psi})
		\nonumber\\
		&\quad
		+
		\mathcal{C}_1(\partial_t \bff{m}; \bff{\rho},\bff{\psi}-\bff{\phi}_h)
		-
		\mathcal{C}_1(\partial_t \bff{m}; \bff{\rho}, \bff{\psi}).
	\end{align}
	It remains to bound each term in the last expression using \eqref{equ:A1 bdd} and H\"older inequality, resulting in
	\begin{align*}
		\norm{\partial_t \bff{\rho}}{\bb{L}^2}^2
		&\leq
		C \left(1+\norm{\bff{m}}{\bb{H}^2}^2\right) \norm{\partial_t \bff{\rho}}{\bb{H}^1} \norm{\bff{\psi}-\bff{\phi}_h}{\bb{H}^1}
		+
		2 \norm{\bff{m}}{\bb{L}^\infty} \norm{\partial_t \bff{m}}{\bb{L}^\infty} \norm{\bff{\rho}}{\bb{L}^2} \norm{\bff{\psi}-\bff{\phi}_h}{\bb{L}^2}
		\\
		&\quad
		+
		2 \norm{\bff{m}}{\bb{L}^\infty} \norm{\partial_t \bff{m}}{\bb{L}^\infty} \norm{\bff{\rho}}{\bb{L}^2} \norm{\bff{\psi}}{\bb{L}^2}
		+
		C \norm{\partial_t \bff{m}}{\bb{L}^\infty} \norm{\bff{\rho}}{\bb{H}^1} \norm{\bff{\psi}-\bff{\phi}_h}{\bb{H}^1}
		\\
		&\quad
		+
		C \norm{\partial_t \bff{m}}{\bb{W}^{1,4}} \norm{\bff{\rho}}{\bb{L}^2} \norm{\bff{\psi}}{\bb{H}^2},
	\end{align*}
	where in the last step we also used the Sobolev embedding and~\eqref{equ:C ineq W14}.
	We now choose $\bff{\phi}_h= P_h \bff{\psi}$. Successively using~\eqref{equ:dt eta H1}, \eqref{equ:Ph approx}, and~\eqref{equ:rho t est}, we have
	\begin{align}\label{equ:dt rho h L2}
		\norm{\partial_t \bff{\rho}(t)}{\bb{L}^2}^2
		&\leq
		C \big(1+\norm{\bff{v}}{L^\infty_T(\bb{H}^{r+1})}\big) (h^{r+1}+h^{r+3}) \norm{\bff{\psi}(t)}{\bb{H}^2}
		\nonumber\\
		&\leq
		C\big(1+\norm{\bff{v}}{L^\infty_T(\bb{H}^{r+1})}\big) h^{r+1} \norm{\partial_t \bff{\rho}(t)}{\bb{L}^2},
	\end{align}
	where in the last step we used~\eqref{equ:psi dt eta}, {and $C$ depends on $\norm{\bff{m}}{W^{1,\infty}_T(\bb{H}^{r+1})}$}. This implies
	\begin{align*}
		\norm{\partial_t \bff{\rho}(t)}{\bb{L}^2}
		\leq
		{C \big(1+\norm{\bff{v}}{L^\infty_T(\bb{H}^{r+1})}\big) h^{r+1}.}
	\end{align*}
	This inequality and~\eqref{equ:dt eta H1} imply~\eqref{equ:dt rho t est}. 
	
	Finally, we prove~\eqref{equ:dt eta t est}. By the coercivity of $\mathcal{A}_2$ in Lemma~\ref{lem:coer bdd A} and the definition of $\bff{\eta}$, in the same manner as \eqref{equ:leq A dt eta} we have
	\begin{align*}
		\mu_2 \norm{\partial_t \bff{\eta}}{\bb{H}^1}^2
		\leq
		\mathcal{A}_2(\bff{m}; \partial_t \bff{\eta}, \partial_t \bff{\eta})
		=
		\mathcal{A}_2\left(\bff{m}; \partial_t \bff{\eta}, P_h(\partial_t \bff{\eta})\right) 
		+
		\mathcal{A}_2\left(\bff{m}; \partial_t \bff{\eta}, P_h(\partial_t \bff{v}) -\partial_t \bff{v}\right).
	\end{align*}
	By the same argument leading to \eqref{equ:A dt eta} and \eqref{equ:dt eta H1}, we have for all $\bff{\psi}_h\in \bb{V}_h$,
	\begin{align}\label{equ:A2 dt eta}
		\mathcal{A}_2 (\bff{m}; \partial_t \bff{\eta},\bff{\psi}_h)
		+
		2 \mathcal{B}_2(\bff{m},\partial_t \bff{m}; \bff{\eta}, \bff{\psi}_h)
		+
		\mathcal{C}_2(\partial_t \bff{m}; \bff{\eta}, \bff{\psi}_h)
		= 0.
	\end{align}
	and infer that
	\begin{align}\label{equ:dt2 eta H1}
		\norm{\partial_t \bff{\eta}(t)}{\bb{H}^1}
		\leq
		C\big(1+\norm{\bff{v}}{L^\infty_T(\bb{H}^{r+1})}\big) h^r,
	\end{align}
	{where $C$ depends on $\norm{\bff{m}}{W^{1,\infty}_T(\bb{H}^{r+1})}$.}
	To bound $\norm{\partial_t \bff{\eta}}{\bb{L}^2}$, we again use duality. For each $\bff{m}(t) \in \bb{H}^2$, let $\widetilde{\bff{\psi}}(t)\in \bb{H}^2_{\bff{n}}$ satisfy
	\begin{align}\label{equ:A2 pa t zeta}
		\mathcal{A}_2(\bff{m}(t); \bff{\zeta}, \widetilde{\bff{\psi}}(t)) = \inpro{\partial_t \bff{\eta}(t)}{\bff{\zeta}}, \quad \forall \bff{\zeta}\in \bb{H}^1. 
	\end{align}
	The existence of $\widetilde{\bff{\psi}}(t)$ is given by Lemma~\ref{lem:dual exist}, and furthermore we have
	\begin{align}\label{equ:psi2 dt eta}
		\norm{\widetilde{\bff{\psi}}(t)}{\bb{H}^2} \leq C \norm{\partial_t \bff{\eta}(t)}{\bb{L}^2},
	\end{align}
	{where $C$ depends on $\norm{\bff{m}}{L^\infty_T(\bb{H}^2)}$.}
	Taking $\bff{\zeta}= \partial_t \bff{\eta}(t)$ in~\eqref{equ:A2 pa t zeta}, we have
	\begin{align*}
		\mathcal{A}_2(\bff{m}(t); \partial_t \bff{\eta}(t), \widetilde{\bff{\psi}}(t))
		=
		\norm{\partial_t \bff{\eta}(t)}{\bb{L}^2}^2.
	\end{align*}
	As in \eqref{equ:dt rho abc}, this equation and~\eqref{equ:A2 dt eta} yield for all $\bff{\psi}_h \in \bb{V}_h$,
	\begin{align*}
		\norm{\partial_t \bff{\eta}}{\bb{L}^2}^2
		&=
		\mathcal{A}_2 (\bff{m}; \partial_t \bff{\eta},\widetilde{\bff{\psi}}-\bff{\psi}_h)
		+
		2\mathcal{B}_2 (\bff{m},\partial_t \bff{m}; \bff{\eta}, \widetilde{\bff{\psi}}-\bff{\psi}_h)
		-
		2 \mathcal{B}_2(\bff{m},\partial_t \bff{m}; \bff{\eta}, \widetilde{\bff{\psi}})
		\\
		&\quad
		+
		\mathcal{C}_2(\partial_t \bff{m}; \bff{\eta}, \widetilde{\bff{\psi}}-\bff{\psi}_h)
		-
		\mathcal{C}_2(\partial_t \bff{m}; \bff{\eta}, \widetilde{\bff{\psi}}).
	\end{align*}
	By~\eqref{equ:A2 bdd}, H\"older inequality, and the Sobolev embedding, we obtain
	\begin{align*}
		\norm{\partial_t \bff{\eta}}{\bb{L}^2}^2
		&\leq
		C\left(1+\norm{\bff{m}}{\bb{H}^2}^2\right) \norm{\partial_t \bff{\eta}}{\bb{H}^1} \norm{\widetilde{\bff{\psi}}-\bff{\psi}_h}{\bb{H}^1}
		+
		2 \norm{\bff{m}}{\bb{L}^\infty} \norm{\partial_t \bff{m}}{\bb{L}^\infty} \norm{\bff{\eta}}{\bb{H}^1} \norm{\widetilde{\bff{\psi}}-\bff{\psi}_h}{\bb{H}^1}
		\\
		&\quad
		+
		2 \big( \norm{\bff{m}}{\bb{W}^{1,4}} \norm{\partial_t \bff{m}}{\bb{L}^\infty} + \norm{\bff{m}}{\bb{L}^\infty} \norm{\partial_t \bff{m}}{\bb{W}^{1,4}} \big) \norm{\bff{\eta}}{\bb{L}^2} \norm{\widetilde{\bff{\psi}}}{\bb{H}^2}
		\\
		&\quad
		+
		C \norm{\partial_t \bff{m}}{\bb{L}^\infty} \norm{\bff{\eta}}{\bb{L}^2} \norm{\widetilde{\bff{\psi}}-\bff{\psi}_h}{\bb{L}^2}
		+
		C \norm{\partial_t \bff{m}}{\bb{L}^\infty} \norm{\bff{\eta}}{\bb{L}^2} \norm{\widetilde{\bff{\psi}}}{\bb{L}^2}.
	\end{align*}
	We now choose $\bff{\chi}= P_h \widetilde{\bff{\psi}}$. By the same argument as in~\eqref{equ:dt rho h L2}, we infer that
	\begin{align*}
		\norm{\partial_t \bff{\eta}(t)}{\bb{L}^2}^2
		\leq
		C\big(1+\norm{\bff{v}}{L^\infty_T(\bb{H}^{r+1})}\big) h^{r+1} \norm{\partial_t \bff{\eta}(t)}{\bb{L}^2},
	\end{align*}
	where in the last step we used~\eqref{equ:psi2 dt eta}, {and $C$ depends on $\norm{\bff{m}}{W^{1,\infty}_T(\bb{H}^{r+1})}$}. This implies
	\begin{align*}
		\norm{\partial_t \bff{\eta}(t)}{\bb{L}^2}
		\leq
		C\big(1+\norm{\bff{v}}{L^\infty_T(\bb{H}^{r+1})}\big) h^{r+1}.
	\end{align*}
	This, together with~\eqref{equ:dt2 eta H1}, implies~\eqref{equ:dt eta t est}.
	The proof of this proposition will then be complete once we show the following regularity result in Lemma~\ref{lem:dual exist}.
\end{proof}

\begin{lemma}\label{lem:dual exist}
	Assume that $(\bff{m},\bff{s})$ is a strong solution of~\eqref{equ:sdllb} with initial data satisfying \eqref{equ:small init ellip}. For any $\bff{\varphi}\in \bb{L}^2$, $\widetilde{\bff{\varphi}}\in \bb{L}^2$, and for each $t\in [0,T]$, there exists $\bff{\psi}(t)\in \bb{H}^2_{\bff{n}}$ and $\widetilde{\bff{\psi}}(t)\in \bb{H}^2_{\bff{n}}$ such that
	\begin{align}\label{equ:A1 ut zeta}
		\mathcal{A}_1(\bff{m}(t); \bff{\zeta}, \bff{\psi}(t))
		&=
		\inpro{\bff{\varphi}}{\bff{\zeta}}, \quad
		\forall \bff{\zeta}\in \bb{H}^1,
		\\
		\label{equ:A2 ut zeta}
		\mathcal{A}_2(\bff{m}(t); \widetilde{\bff{\zeta}}, \widetilde{\bff{\psi}}(t))
		&=
		\inpro{\widetilde{\bff{\varphi}}}{\widetilde{\bff{\zeta}}}, \quad
		\forall \bff{\widetilde{\zeta}}\in \bb{H}^1.
	\end{align}
	Moreover, there exist constants $C_1$ and $C_2$ such that
	\begin{align}\label{equ:psi1 H2}
		\norm{\bff{\psi}(t)}{\bb{H}^2} \leq C_1 \norm{\bff{\varphi}}{\bb{L}^2},
		\\
		\label{equ:psi2 H2}
		\norm{\widetilde{\bff{\psi}}(t)}{\bb{H}^2} \leq C_2 \norm{\bff{\widetilde{\bff{\varphi}}}}{\bb{L}^2},
	\end{align}
	where the constants $C_1, C_2$ depend on $\Omega$, $T$, and $\norm{\bff{m}}{L^\infty_T(\bb{H}^2)}$.
\end{lemma}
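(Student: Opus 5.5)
The plan is to obtain $\bff{\psi}(t)$ and $\widetilde{\bff{\psi}}(t)$ in $\bb{H}^1$ by the Lax--Milgram theorem, and then upgrade them to $\bb{H}^2_{\bff{n}}$ by treating the adjoint problems as perturbed Neumann Laplacians. Throughout, $t$ is fixed and $\bff{m}=\bff{m}(t)\in\bb{H}^2\hookrightarrow \bb{L}^\infty\cap\bb{W}^{1,4}$. All constants are controlled by $\norm{\bff{m}}{L^\infty_T(\bb{H}^2)}$.

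\emph{Step 1: weak solutions.} By \eqref{equ:m less m0} and \eqref{equ:small init ellip}, $\norm{\bff{m}(t)}{\bb{L}^\infty}^2\le D_\ast/(2\beta D^\ast)$. Lemma~\ref{lem:coer bdd A} then gives boundedness and coercivity of $\mathcal{A}_1(\bff{m};\cdot,\cdot)$ and $\mathcal{A}_2(\bff{m};\cdot,\cdot)$, with coercivity constants $\mu_1,\mu_2$ independent of $\bff{m}$. The same properties hold for the transposed forms $(\bff{\psi},\bff{\zeta})\mapsto\mathcal{A}_i(\bff{m};\bff{\zeta},\bff{\psi})$. Lax--Milgram therefore yields unique $\bff{\psi},\widetilde{\bff{\psi}}\in\bb{H}^1$ satisfying \eqref{equ:A1 ut zeta}--\eqref{equ:A2 ut zeta}, with
\[
\norm{\bff{\psi}}{\bb{H}^1}\le\mu_1^{-1}\norm{\bff{\varphi}}{\bb{L}^2},
\qquad
\norm{\widetilde{\bff{\psi}}}{\bb{H}^1}\le\mu_2^{-1}\norm{\widetilde{\bff{\varphi}}}{\bb{L}^2}.
\]

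\emph{Step 2: identify the strong forms.} Using $\inpro{\bff{m}\times\nabla\bff{\zeta}}{\nabla\bff{\psi}}=-\inpro{\nabla\bff{\zeta}}{\bff{m}\times\nabla\bff{\psi}}$ and integrating by parts, \eqref{equ:A1 ut zeta} formally reads
\[
-(I+M)\Delta\bff{\psi}=\bff{g}:=\bff{\varphi}-\bff{\psi}-|\bff{m}|^2\bff{\psi}+\nabla\bff{m}\times\nabla\bff{\psi}
\]
(up to sign conventions), where $M\bff{v}:=\bff{m}\times\bff{v}$. The natural boundary condition is $(I+M)\partial_{\bff{n}}\bff{\psi}=\bff{0}$. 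Since $M$ is skew, $(I+M)\bff{v}\cdot\bff{v}=|\bff{v}|^2$, so $I+M$ is pointwise invertible and $\partial_{\bff{n}}\bff{\psi}=\bff{0}$. The inverse is explicit:
\[
(I+M)^{-1}\bff{v}=\frac{\bff{v}-\bff{m}\times\bff{v}+(\bff{m}\cdot\bff{v})\bff{m}}{1+|\bff{m}|^2},
\]
and it is bounded uniformly by a constant.

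For \eqref{equ:A2 ut zeta}, the principal part is $-\nabla\cdot\big(D_0(I-\beta\bff{m}\otimes\bff{m})\nabla\widetilde{\bff{\psi}}\big)$. The matrix $I-\beta\bff{m}\otimes\bff{m}$ is symmetric with eigenvalues in $[1/2,1]$ by the smallness assumption, and $D_0\ge D_\ast$. This again gives $\partial_{\bff{n}}\widetilde{\bff{\psi}}=\bff{0}$. Expanding, $\Delta\widetilde{\bff{\psi}}$ equals a bounded invertible matrix applied to an $\bb{L}^2$ datum plus lower-order terms. Those lower-order terms have the form $\nabla D_0\cdot\nabla\widetilde{\bff{\psi}}$, $D_0\nabla(\bff{m}\otimes\bff{m})\nabla\widetilde{\bff{\psi}}$, $D_0\widetilde{\bff{\psi}}$, and $D_0\widetilde{\bff{\psi}}\times\bff{m}$. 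Here I use $D_0\in W^{1,\infty}$, which holds under the assumptions of Theorem~\ref{the:exist}.

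\emph{Step 3: a priori $\bb{H}^2$ bound.} Suppose $\bff{\psi}\in\bb{H}^2_{\bff{n}}$. Elliptic regularity for the Neumann Laplacian on a smooth or convex domain, together with the formula for $(I+M)^{-1}$, gives
\[
\norm{\bff{\psi}}{\bb{H}^2}\le C\big(\norm{\bff{\psi}}{\bb{H}^1}+\norm{\bff{g}}{\bb{L}^2}\big)
\le C\big(\norm{\bff{\varphi}}{\bb{L}^2}+\norm{\bff{\psi}}{\bb{H}^1}+\norm{\nabla\bff{m}}{\bb{L}^4}\norm{\nabla\bff{\psi}}{\bb{L}^4}\big).
\]
The Gagliardo--Nirenberg inequality gives $\norm{\nabla\bff{\psi}}{\bb{L}^4}\le C\norm{\bff{\psi}}{\bb{H}^1}^{1/4}\norm{\bff{\psi}}{\bb{H}^2}^{3/4}$, so Young's inequality absorbs that term. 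Combined with Step 1, this yields \eqref{equ:psi1 H2}, and \eqref{equ:psi2 H2} follows identically.

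\emph{Step 4: justify that $\bff{\psi}\in\bb{H}^2$ (main obstacle).} The difficulty is that Step 3 assumes $\bff{\psi}\in\bb{H}^2$, and the coefficient $\bff{m}$ is only $\bb{H}^2$, i.e.\ Hölder continuous. I would use the method of continuity. Consider the family $L_\lambda:\bb{H}^2_{\bff{n}}\to\bb{L}^2$, $\lambda\in[0,1]$, obtained by replacing $\bff{m}$ by $\lambda\bff{m}$. The smallness condition and the bound on $\norm{(I+\lambda M)^{-1}}{}$ hold uniformly in $\lambda$. The a priori estimate of Step 3 therefore holds uniformly in $\lambda$, and $L_0=-\Delta+I$ is invertible. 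Hence $L_1$ is onto. By uniqueness in Step 1, its solution coincides with the Lax--Milgram solution $\bff{\psi}$, which gives $\bff{\psi}\in\bb{H}^2_{\bff{n}}$. If the continuity argument proves awkward, the fallback is to mollify $\bff{m}$, apply classical regularity to the smoothed problem, and pass to the limit using the uniform bounds from Step 3.
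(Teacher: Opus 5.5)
Your proof is correct, and its core estimate is the same as the paper's. Both rest on the skew-symmetry of $\bff{v}\mapsto\bff{m}\times\bff{v}$, and both absorb the commutator term $\nabla\bff{m}\times\nabla\bff{\psi}$ through $\norm{\nabla\bff{m}}{\bb{L}^4}\norm{\nabla\bff{\psi}}{\bb{L}^4}$, Gagliardo--Nirenberg and Young. What differs is how the estimate is organised and justified.

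The paper tests \eqref{equ:A1 ut zeta} with $\bff{\zeta}=-\Delta\bff{\psi}$, so that the top-order cross term $\inpro{\bff{m}\times\Delta\bff{\psi}}{\Delta\bff{\psi}}$ drops out; this is what \eqref{equ:C Delta w w} encodes. It then bounds $\norm{\Delta\bff{\psi}}{\bb{L}^2}$ and invokes Neumann elliptic regularity. That computation is formal, and it is backed only by a remark about Galerkin approximation in the Neumann eigenbasis, where $-\Delta\bff{\psi}_N$ is an admissible test function and the boundary condition is built into each approximant.

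You instead pass to nondivergence form and invert the principal coefficient pointwise, using $|(I+M)^{-1}\bff{v}|\le|\bff{v}|$, respectively that the eigenvalues of $I-\beta\bff{m}\otimes\bff{m}$ lie in $[1/2,1]$. You then apply Neumann regularity to $\Delta\bff{\psi}=-(I+M)^{-1}\bff{g}$. Finally you settle the regularity question by the method of continuity plus uniqueness of the Lax--Milgram solution.

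The paper's route is shorter and reuses Lemma~\ref{lem:coer bdd A} directly. Yours is complete as written and shows transparently why the natural boundary condition reduces to $\partial_{\bff{n}}\bff{\psi}=\bff{0}$. It also makes explicit the regularity of $D_0$ (e.g.\ $D_0\in W^{1,\infty}(\Omega)$) that the paper needs but does not state when it treats $\mathcal{A}_2$ ``in a similar manner''.

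One detail to fix: in the $\mathcal{A}_2$ homotopy $\bff{m}\mapsto\lambda\bff{m}$, the operator at $\lambda=0$ is $-\nabla\cdot(D_0\nabla\,\cdot\,)+D_0$ rather than $-\Delta+I$. So either quote Neumann regularity for Lipschitz coefficients, or deform $D_0$ to $1$ as well. The uniform bounds survive the latter, since $(1-\lambda)+\lambda D_0$ stays between $\min\{1,D_\ast\}$ and $\max\{1,D^\ast\}$ and its gradient is bounded by $\norm{\nabla D_0}{L^\infty}$.
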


\begin{proof}
	{Let $(\bff{m},\bff{s})$ be a given strong solution with initial data satisfying \eqref{equ:small init ellip}. The existence of $\bff{\psi}(t)\in \bb{H}^1$ satisfying \eqref{equ:A1 ut zeta} and of $\widetilde{\bff{\psi}}(t)\in \bb{H}^1$ satisfying \eqref{equ:A2 ut zeta} follows from the Lax--Milgram theorem, noting the coercivity estimates \eqref{equ:A1 coercive} and \eqref{equ:A2 coercive}, the boundedness of $\mathcal{A}_1$ and $\mathcal{A}_2$ in \eqref{equ:A1 bdd} and \eqref{equ:A2 bdd}, as well as the regularity of the solution $\bff{m}$. Taking $\bff{\zeta}=\bff{\psi}(t)$ in \eqref{equ:A1 ut zeta} and using the coercivity of $\mathcal{A}_1$, we have
	\begin{align*}
		\mu_1 \norm{\bff{\psi}(t)}{\bb{H}^1}^2
		\leq
		\mathcal{A}_1(\bff{m}(t); \bff{\psi}(t), \bff{\psi}(t))
		=
		\inpro{\bff{\varphi}}{\bff{\psi}(t)}
		\leq
		\norm{\bff{\varphi}}{\bb{L}^2} \norm{\bff{\psi}(t)}{\bb{L}^2},
	\end{align*}
	which implies
	\begin{align}\label{equ:psi n H1}
		\norm{\bff{\psi}(t)}{\bb{H}^1} \leq C \norm{\bff{\varphi}}{\bb{L}^2}.
	\end{align}
	Similarly, making use of the coercivity of $\mathcal{A}_2$ and \eqref{equ:m less m0}, we also have
	\begin{align}\label{equ:psi tilde n H1}
		\norm{\widetilde{\bff{\psi}}(t)}{\bb{H}^1} \leq C \norm{\widetilde{\bff{\varphi}}}{\bb{L}^2}.
	\end{align}
	
	Next, we establish \eqref{equ:psi1 H2}; the proof of \eqref{equ:psi2 H2} follows in a similar manner. To this end, we carry out a formal energy estimate, which can be justified rigorously through the standard finite-dimensional Galerkin approximation based on the eigenfunctions of the Neumann Laplacian.}
	Taking $\bff{\zeta}= -\Delta \bff{\psi}$, integrating by parts as necessary, and applying H\"older's inequality together with~\eqref{equ:C Delta w w}, we obtain
	\begin{align*}
		D_\ast \norm{\Delta \bff{\psi}}{\bb{L}^2}^2
		+
		D_\ast \norm{\nabla \bff{\psi}}{\bb{L}^2}^2
		&\leq
		\abs{\mathcal{B}_1(\bff{m},\bff{m}; \Delta \bff{\psi}, \bff{\psi})}
		+
		\abs{\mathcal{C}_1(\bff{m}; \Delta \bff{\psi}, \bff{\psi})}
		+
		\abs{\inpro{\bff{\varphi}}{\Delta \bff{\psi}}}
		\\
		&\leq
		D^\ast \norm{\bff{m}}{\bb{L}^\infty}^2 \norm{\Delta \bff{\psi}}{\bb{L}^2} \norm{\bff{\psi}}{\bb{L}^2}
		\\
		&\quad
		+
		C \norm{\bff{m}}{\bb{W}^{1,4}} \norm{\bff{\psi}}{\bb{W}^{1,4}} \norm{\Delta \bff{\psi}}{\bb{L}^2}
		+
		\norm{\bff{\varphi}}{\bb{L}^2} \norm{\Delta \bff{\psi}}{\bb{L}^2}
		\\
		&\leq
		C \norm{\bff{\psi}}{\bb{H}^1}^2
		+
		\frac{\alpha}{2} \norm{\Delta \bff{\psi}}{\bb{L}^2}^2
		+
		\norm{\bff{\varphi}}{\bb{L}^2}^2,
	\end{align*}
	where we in the last step we also used the Gagliardo--Nirenberg inequality, Sobolev embedding, and Young's inequality. This, together with~\eqref{equ:psi n H1} and the elliptic regularity result~\cite{Gri11}, implies~\eqref{equ:psi1 H2}. This completes the proof of Lemma~\ref{lem:dual exist} (thus also of Proposition~\ref{pro:est rho}).
\end{proof}

Next, we also need the following technical result on the stability of the elliptic projections defined previously. To show this lemma, we need to assume~\eqref{equ:deg r}.

\begin{lemma}\label{lem:stab elliptic}
	Let $(\bff{m},\bff{s})$ be a strong solution of~\eqref{equ:sdllb}
	{and let} $\Pi_h$ and $\Lambda_h$ be {the projection operators}
	defined by~\eqref{equ:A1 proj} and \eqref{equ:A2 proj}, respectively.
	Then {for any $\bff{v}\in L^\infty_T(\bb{H}^2)$ and $t\in [0,T]$,}
	\begin{align}\label{equ:stab Linfty Pi}
		\norm{\Pi_h \bff{v}(t)}{\bb{L}^{\infty}}
		&\leq
		C_{\Pi} \norm{\bff{v}}{L^\infty_T(\bb{H}^2)},
	\end{align}
	{where $C_\Pi$ depends on $T$ and $\norm{\bff{m}}{L^\infty_T(\bb{H}^2)}$, but is independent of $h$ or $\bff{v}$.}
	 
	Now, {suppose that} the polynomial degree $r$ in $\bb{V}_h$
	satisfies~\eqref{equ:deg r}. Then for all {$\bff{v}\in
	L^\infty_T(\bb{H}^3)$ and $t\in [0,T]$,}
	\begin{align}\label{equ:stab ellip Pi}
		\norm{\Pi_h \bff{v}(t)}{\bb{W}^{1,\infty}}
		&\leq
		C \norm{\bff{v}}{L^\infty_T(\bb{H}^3)},
		\\
		\label{equ:stab ellip Lambda}
		\norm{\Lambda_h \bff{v}(t)}{\bb{W}^{1,\infty}}
		&\leq
		C\norm{\bff{v}}{L^\infty_T(\bb{H}^3)}.
	\end{align}
	{The constant $C$ depends on $T$ and $\norm{\bff{m}}{L^\infty_T(\bb{H}^2)}$, but is independent of $h$ or $\bff{v}$.}
\end{lemma}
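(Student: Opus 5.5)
The plan is to compare $\Pi_h\bff{v}$ with a standard quasi-interpolant and use inverse estimates together with the error bounds of Proposition~\ref{pro:est rho}. Fix $t$ and let $I_h$ denote the Scott--Zhang (or Lagrange, since $\bb{H}^2\hookrightarrow C(\overline\Omega)$ for $d\le 3$) interpolant onto $\bb{V}_h$. We split
\[
\Pi_h\bff{v}=I_h\bff{v}+(\Pi_h\bff{v}-I_h\bff{v}),
\]
and bound the two pieces separately. The interpolant part is stable. We have $\norm{I_h\bff{v}}{\bb{L}^\infty}\le C\norm{\bff{v}}{\bb{L}^\infty}\le C\norm{\bff{v}}{\bb{H}^2}$. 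Likewise $\norm{I_h\bff{v}}{\bb{W}^{1,\infty}}\le C\norm{\bff{v}}{\bb{W}^{1,\infty}}\le C\norm{\bff{v}}{\bb{H}^3}$ by the embedding $\bb{H}^3\hookrightarrow\bb{W}^{1,\infty}$ in 3D.

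For the discrete remainder $\bff{e}_h:=\Pi_h\bff{v}-I_h\bff{v}\in\bb{V}_h$ we use inverse inequalities. For \eqref{equ:stab Linfty Pi} we take $r=1$ (or general $r$) and only the $\bb{H}^2$ data. By \eqref{equ:inverse},
\[
\norm{\bff{e}_h}{\bb{L}^\infty}\le C h^{-3/2}\big(\norm{\bff{\rho}}{\bb{L}^2}+\norm{\bff{v}-I_h\bff{v}}{\bb{L}^2}\big).
\]
The $\bb{L}^2$ estimate in the proof of \eqref{equ:rho t est}, run with the minimal regularity ($\bb{H}^1$-error $\le Ch\norm{\bff{v}}{\bb{H}^2}$ from C\'ea, then Aubin--Nitsche via Lemma~\ref{lem:dual exist}), gives $\norm{\bff{\rho}}{\bb{L}^2}\le Ch^2\norm{\bff{v}}{\bb{H}^2}$. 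Hence $\norm{\bff{e}_h}{\bb{L}^\infty}\le Ch^{1/2}\norm{\bff{v}}{\bb{H}^2}$, which is bounded. The constant depends only on $\beta_1,\mu_1$ and the constant in Lemma~\ref{lem:dual exist}, hence on $\norm{\bff{m}}{L^\infty_T(\bb{H}^2)}$.

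For \eqref{equ:stab ellip Pi} we need the gradient, and this is where \eqref{equ:deg r} enters. The inverse estimate gives
\[
\norm{\nabla\bff{e}_h}{\bb{L}^\infty}\le Ch^{-3/2}\norm{\nabla\bff{e}_h}{\bb{L}^2}\le Ch^{-3/2}\big(\norm{\nabla\bff{\rho}}{\bb{L}^2}+\norm{\nabla(\bff{v}-I_h\bff{v})}{\bb{L}^2}\big).
\]
With $r\ge2$ and $\bff{v}\in\bb{H}^3$, C\'ea's lemma (as in \eqref{equ:eta H1}) yields $\norm{\nabla\bff{\rho}}{\bb{L}^2}\le Ch^{2}\norm{\bff{v}}{\bb{H}^3}$, and the interpolation error has the same order. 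Thus $\norm{\nabla\bff{e}_h}{\bb{L}^\infty}\le Ch^{1/2}\norm{\bff{v}}{\bb{H}^3}$, and $\norm{\bff{e}_h}{\bb{L}^\infty}$ is treated as before. In $d\le2$ the factor is $h^{-d/2}\cdot h^{r}$ with $r\ge1$, which is still bounded. The bound \eqref{equ:stab ellip Lambda} is identical, with $\mathcal{A}_2$ in place of $\mathcal{A}_1$. Its coercivity \eqref{equ:A2 coercive} holds because $\norm{\bff{m}(t)}{\bb{L}^\infty}\le\norm{\bff{m}_0}{\bb{L}^\infty}$ by \eqref{equ:m less m0} and assumption \eqref{equ:small init ellip}, and the $\bb{H}^2$-regularity \eqref{equ:psi2 H2} supplies the duality step. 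Taking the supremum over $t$ gives the $L^\infty_T$ norms.

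The main obstacle is the $\bb{W}^{1,\infty}$ bound with low polynomial degree. This is precisely why $r\ge2$ is imposed in 3D: with $r=1$ the crude inverse estimate loses $h^{-1/2}$, and recovering it would require weighted-norm $\bb{W}^{1,\infty}$ stability of Ritz projections for a non-symmetric form with variable coefficients. That theory we avoid. A secondary point to check is that the H\"older step in the boundedness of $\mathcal{A}_1$ (the term $\bff{m}\times\nabla\bff{v}$) only needs $\bff{m}\in\bb{L}^\infty$. This keeps all constants dependent on $\norm{\bff{m}}{L^\infty_T(\bb{H}^2)}$ alone.
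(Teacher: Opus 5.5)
Your argument is correct and is essentially the paper's proof. Both split off a stable discrete approximant of $\bff{v}$, apply an inverse inequality to the discrete remainder, and absorb the loss with the elliptic-projection error bounds; for the gradient in 3D, $r\ge 2$ gives $h^{-3/2}h^{2}=h^{1/2}$.

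The differences are minor:
\begin{enumerate}
\item The paper compares with the $\bb{L}^2$ projection $P_h$, using its stability \eqref{equ:Ph Lp stab}--\eqref{equ:Ph H1 stab}, rather than an interpolant.
\item For \eqref{equ:stab Linfty Pi} the paper uses the $\bb{H}^1\to\bb{L}^\infty$ inverse estimate \eqref{equ:inverse infty H1} together with the C\'ea bound $\norm{\bff{\rho}}{\bb{H}^1}\le Ch\norm{\bff{v}}{\bb{H}^2}$. This avoids the Aubin--Nitsche duality step you invoke there.
\end{enumerate}
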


\begin{proof}
	First, we show~\eqref{equ:stab Linfty Pi}. Recall that $\bff{\rho}$ and $\bff{\eta}$ are defined by \eqref{equ:rho eta def}. By the triangle inequality, the inverse estimate~\eqref{equ:inverse infty H1}, \eqref{equ:rho t est}, and the stability of the finite element projector $P_h$ in \eqref{equ:Ph Lp stab}, we obtain
	\begin{align*}
		\norm{\Pi_h \bff{v}(t)}{\bb{L}^{\infty}}
		&\leq
		\norm{\Pi_h \bff{v}(t)- P_h(\bff{v}(t))}{\bb{L}^{\infty}} 
		+
		\norm{P_h(\bff{v}(t))}{\bb{L}^{\infty}}
		\\
		&\leq
		C\ell_h \big(\norm{\bff{\rho}(t)}{\bb{H}^1} + \norm{P_h(\bff{v}(t))- \bff{v}(t)}{\bb{H}^1} \big) 
		+
		\norm{\bff{v}(t)}{\bb{L}^{\infty}}
		\\
		&\leq
		C\ell_h \big(Ch \norm{\bff{v}}{L^\infty_T(\bb{H}^2)} \big) 
		+
		C\norm{\bff{v}}{L^\infty_T(\bb{L}^\infty)}
		\\
		&\leq
		C(1+h^r \ell_h) \norm{\bff{v}}{L^\infty_T(\bb{H}^2)},
	\end{align*}
	where {$C$ depends on $\norm{\bff{m}}{L^\infty_T(\bb{H}^2)}$}, and $\ell_h$ is defined in~\eqref{equ:ell h}, thus showing \eqref{equ:stab Linfty Pi}.
	
	Next, we prove~\eqref{equ:stab ellip Pi}. By a similar argument, we have
	\begin{align*}
		\norm{\Pi_h \bff{v}(t)}{\bb{W}^{1,\infty}}
		&\leq
		\norm{\Pi_h \bff{v}(t)- P_h(\bff{v}(t))}{\bb{W}^{1,\infty}} 
		+
		\norm{P_h(\bff{v}(t))}{\bb{W}^{1,\infty}}
		\\
		&\leq
		Ch^{-d/2} \big(\norm{\bff{\rho}(t)}{\bb{H}^1} + \norm{P_h(\bff{v}(t))- \bff{v}(t)}{\bb{H}^1} \big) 
		+
		C\norm{\bff{v}(t)}{\bb{W}^{1,\infty}}
		\\
		&\leq
		C(1+h^{r-d/2}) \norm{\bff{v}}{L^\infty_T(\bb{H}^3)}.
	\end{align*}
	where {$C$ depends on $\norm{\bff{m}}{L^\infty_T(\bb{H}^2)}$} and we used the embedding $\bb{H}^3\hookrightarrow \bb{W}^{1,\infty}$. Inequality \eqref{equ:stab ellip Pi} follows by noting the fact that $r\geq d/2$ by the assumption~\eqref{equ:deg r}, while inequality \eqref{equ:stab ellip Lambda} can be shown similarly. This completes the proof of the lemma.
\end{proof}

{We now proceed with the analysis of numerical scheme proposed in Algorithm~\eqref{alg:scheme}. 
To this end, the approximation errors are decomposed as:
\begin{align}
	\label{equ:theta rho}
	\bff{m}_h^n-\bff{m}^n &= (\bff{m}_h^n- \Pi_h \bff{m}^n)+ (\Pi_h \bff{m}^n- \bff{m}^n) =: \bff{\theta}^n + \bff{\rho}^n,
	\\
	\label{equ:xi eta}
	\bff{s}_h^n-\bff{s}^n &= (\bff{s}_h^n- \Lambda_h \bff{s}^n)+ (\Lambda_h \bff{s}^n- \bff{s}^n) =: \bff{\xi}^n + \bff{\eta}^n,
\end{align}
where $\Pi_h \bff{m}^n$ and $\Lambda_h \bff{s}^n$ are, respectively, the $\mathcal{A}_1$-elliptic projection of the solution $\bff{m}(t_n)$ and the $\mathcal{A}_2$-elliptic projection of the solution $\bff{s}(t_n)$ introduced in Definition~\ref{def:elliptic proj}.	
	
The error analysis proceeds by induction, showing the existence of solution and then the error estimate alternatingly. We outline the general strategy now. First, we show a criterion for the unique existence of $(\bff{m}_h^n,\bff{s}_h^n)$ in terms of the numerical solution at the previous time-step, namely:
\begin{equation}\label{equ:mhn1 less D}
	\norm{\bff{m}_h^{n-1}}{\bb{L}^\infty}^2 < D_\ast/(2\beta D^\ast).
\end{equation}

Next, we prove that if $(\bff{m}_h^j,\bff{s}_h^j)$ exists for all $j\leq n$ for some $n\leq N$, then these discrete solutions necessarily satisfy a certain stability property for all $j\leq n$. Under the assumption that the criterion \eqref{equ:mhn1 less D} holds (and thus $(\bff{m}_h^n,\bff{s}_h^n)$ exists), we derive auxiliary error estimates for $\bff{m}_h^n-\Pi_h \bff{m}^n$ and $\bff{s}_h^n-\Lambda_h \bff{s}^n$. These auxiliary error estimates are then used to prove, under a suitable restriction on the time-step size, that $\norm{\bff{m}_h^n}{\bb{L}^\infty}^2 < D_\ast/(2\beta D^\ast)$.
This guarantees the unique existence of $(\bff{m}_h^{n+1},\bff{s}_h^{n+1})$. Proceeding inductively, we thus establish the unique existence of $(\bff{m}_h^j, \bff{s}_h^j)$ along with the corresponding auxiliary error estimates for all $j\leq N$. Finally, these auxiliary estimates, together with the error decompositions in \eqref{equ:theta rho} and \eqref{equ:xi eta}, and the projection error estimates in Proposition~\ref{pro:est rho} yield the desired full error estimates.}

We now proceed to implement this strategy, starting with the following proposition.

\begin{proposition}\label{pro:exist}
{Let $n\geq 1$} and $(\bff{m}_h^{n-1},\bff{s}_h^{n-1})\in \bb{V}_h\times \bb{V}_h$ be given such that condition \eqref{equ:mhn1 less D} holds.
Then there exists a unique solution $(\bff{m}_h^n, \bff{s}_h^n)\in \bb{V}_h \times \bb{V}_h$ to the numerical scheme at time step $n$, as described in Algorithm~\ref{alg:scheme}.
\end{proposition}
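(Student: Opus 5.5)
Each step of Algorithm~\ref{alg:scheme} is a square linear system on the finite-dimensional space $\bb{V}_h$, so I would prove existence and uniqueness together by showing that the associated bilinear forms are coercive on $\bb{V}_h$. By the Lax--Milgram theorem (or, equivalently, injectivity of a square linear map), this gives a unique solution. The two systems \eqref{equ:weak m disc} and \eqref{equ:weak s disc} are fully decoupled: both involve only the given data $(\bff{m}_h^{n-1},\bff{s}_h^{n-1})$ and $\bff{j}^n$. I would therefore treat them separately.

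For $\bff{m}_h^n$, multiply \eqref{equ:weak m disc} by $k$ and define on $\bb{V}_h\times\bb{V}_h$ the form
\[
a_1(\bff{v},\bff{w}) := \inpro{\bff{v}}{\bff{w}} + k\,\mathcal{A}_1(\bff{m}_h^{n-1};\bff{v},\bff{w}) + k\,\mathcal{L}_1(\bff{s}_h^{n-1};\bff{v},\bff{w}),
\]
with right-hand side $\bff{\phi}_h\mapsto\inpro{\bff{m}_h^{n-1}}{\bff{\phi}_h}$. Taking $\bff{w}=\bff{v}$, the terms $\mathcal{C}_1(\bff{m}_h^{n-1};\bff{v},\bff{v})=-\inpro{\bff{m}_h^{n-1}\times\nabla\bff{v}}{\nabla\bff{v}}$ and $\mathcal{L}_1(\bff{s}_h^{n-1};\bff{v},\bff{v})=-\inpro{\bff{v}\times\bff{s}_h^{n-1}}{\bff{v}}$ vanish pointwise by orthogonality of the cross product. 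The term $\mathcal{B}_1(\bff{m}_h^{n-1},\bff{m}_h^{n-1};\bff{v},\bff{v})=\norm{|\bff{m}_h^{n-1}||\bff{v}|}{\bb{L}^2}^2$ is nonnegative. Hence
\[
a_1(\bff{v},\bff{v})\geq \norm{\bff{v}}{\bb{L}^2}^2+k\norm{\bff{v}}{\bb{H}^1}^2.
\]
Boundedness is immediate because $\bff{m}_h^{n-1},\bff{s}_h^{n-1}\in\bb{V}_h\subset\bb{L}^\infty$. No smallness condition is needed for this step.

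For $\bff{s}_h^n$, define similarly
\[
a_2(\bff{v},\bff{w}) := \inpro{\bff{v}}{\bff{w}}+k\,\mathcal{A}_2(\bff{m}_h^{n-1};\bff{v},\bff{w}),
\]
with right-hand side $\bff{\psi}_h\mapsto\inpro{\bff{s}_h^{n-1}}{\bff{\psi}_h}-k\,\mathcal{L}_2(\bff{j}^n;\bff{m}_h^{n-1},\bff{\psi}_h)$. This right-hand side is a bounded functional on $\bb{V}_h$ by \eqref{equ: H1 H1 ineq}, since $\bff{j}^n\in\bb{L}^\infty$. The term $\mathcal{C}_2(\bff{m}_h^{n-1};\bff{v},\bff{v})=\inpro{D_0\bff{v}\times\bff{m}_h^{n-1}}{\bff{v}}$ again vanishes pointwise. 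The only delicate term is the backward-diffusion contribution $\mathcal{B}_2$, which is where condition \eqref{equ:mhn1 less D} enters. By Lemma~\ref{lem:coer bdd A}(iii), applied with $\bff{\phi}=\bff{m}_h^{n-1}$ (which lies in $\bb{H}^1\cap\bb{L}^\infty$), we get $\mathcal{A}_2(\bff{m}_h^{n-1};\bff{v},\bff{v})\geq\mu_2\norm{\bff{v}}{\bb{H}^1}^2$. The underlying pointwise estimate $|(\bff{m}\otimes\bff{m})\nabla\bff{v}:\nabla\bff{v}|\leq|\bff{m}|^2|\nabla\bff{v}|^2$ together with $D_0\leq D^\ast$ gives the lower bound $D_\ast-\beta D^\ast\norm{\bff{m}_h^{n-1}}{\bb{L}^\infty}^2>D_\ast/2$. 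It follows that
\[
a_2(\bff{v},\bff{v})\geq \norm{\bff{v}}{\bb{L}^2}^2+k\mu_2\norm{\bff{v}}{\bb{H}^1}^2,
\]
so $a_2$ is coercive.

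Lax--Milgram on $\bb{V}_h$ then gives unique $\bff{m}_h^n$ and $\bff{s}_h^n$. The main point to verify carefully is the coercivity of $a_2$, that is, controlling the indefinite $\beta$-term by the strict inequality \eqref{equ:mhn1 less D}. The remaining steps are routine, apart from checking that the cross-product terms cancel exactly, including in the first slot of $\mathcal{C}_1$, where $\nabla\bff{v}$ appears columnwise.
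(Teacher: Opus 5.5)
Your proposal is correct and follows the paper's proof essentially step for step. You rewrite each decoupled step as a linear problem on $\bb{V}_h$ and show that $\inpro{\cdot}{\cdot}+k\mathcal{A}_1(\bff{m}_h^{n-1};\cdot,\cdot)+k\mathcal{L}_1(\bff{s}_h^{n-1};\cdot,\cdot)$ is coercive unconditionally, because the cross-product terms vanish and $\mathcal{B}_1\geq 0$. You then show $\inpro{\cdot}{\cdot}+k\mathcal{A}_2(\bff{m}_h^{n-1};\cdot,\cdot)$ is coercive, using \eqref{equ:mhn1 less D} to obtain $D_\ast-\beta D^\ast\norm{\bff{m}_h^{n-1}}{\bb{L}^\infty}^2>D_\ast/2$, and conclude by Lax--Milgram, exactly as the paper does (your coercivity bounds are stated slightly more precisely than the paper's).
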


\begin{proof}
Given $(\bff{m}_h^{n-1}, \bff{s}_h^{n-1})$ and $\bff{j}^n$, our numerical scheme is seeking {for $(\bff{v},\bff{w})\in \bb{V}_h\times \bb{V}_h$ which satisfies} 
\begin{align*}
	&\inpro{\bff{v}}{\bff{\phi}_h}
	+
	k\mathcal{A}_1(\bff{m}_h^{n-1};\bff{v}, \bff{\phi}_h)
	+
	k\mathcal{L}_1(\bff{s}_h^{n-1}; \bff{v}, \bff{\phi}_h)
	= \inpro{\bff{m}_h^{n-1}}{\bff{\phi}_h},
	\quad \forall \bff{\phi}_h\in \bb{V}_h,
	\\
	&\inpro{\bff{w}}{\bff{\psi}_h}
	+
	k\mathcal{A}_2(\bff{m}_h^{n-1};\bff{w}, \bff{\psi}_h)
	=
	\inpro{\bff{s}_h^{n-1}}{\bff{\psi}_h}
	-
	k\mathcal{L}_2(\bff{j}^n; \bff{m}_h^{n-1}, \bff{\psi}_h),
	\quad \forall \bff{\psi}_h\in \bb{V}_h.
\end{align*}
It is clear that for any $\bff{v}\in \bb{V}_h$,
\begin{align*}
	\inpro{\bff{v}}{\bff{v}}+ k\mathcal{A}_1(\bff{m}_h^{n-1};\bff{v}, \bff{v})
	+
	k\mathcal{L}_1(\bff{s}_h^{n-1}; \bff{v}, \bff{v})
	&\geq 
	\norm{\bff{v}}{\bb{H}^1}^2.
\end{align*}
Moreover, if \eqref{equ:mhn1 less D} holds, then for any $\bff{w}\in \bb{V}_h$,
\begin{align*}
	\inpro{\bff{w}}{\bff{w}}
	+
	k\mathcal{A}_2(\bff{m}_h^{n-1};\bff{w}, \bff{w})
	\geq
	(1+k D_\ast) \norm{\bff{w}}{\bb{L}^2}^2
	+
	k(D_\ast-\beta D^\ast \delta) \norm{\nabla\bff{w}}{\bb{L}^2}^2
	\geqs \norm{\bff{w}}{\bb{H}^1}^2.
\end{align*}
Boundedness of the bilinear forms is a consequence of \eqref{equ:A1 bdd} and \eqref{equ:A2 bdd}.
The existence of a unique solution $(\bff{m}_h^n, \bff{s}_h^n)\in \bb{V}_h\times \bb{V}_h$ then follows from the Lax--Milgram theorem.
\end{proof}

{Assuming the existence of $(\bff{m}_h^j, \bff{s}_h^j)$ for all $j\leq n$, the scheme described in Algorithm~\ref{alg:scheme} enjoys the following stability property.}

\begin{lemma}\label{lem:num stab}
Suppose that for some $n\leq N$,
\begin{equation}\label{equ:max jn m criter}
	\max_{1\leq j\leq n} \norm{\bff{m}_h^{j-1}}{\bb{L}^\infty}^2 
	< D_\ast/(2\beta D^\ast),
\end{equation}
and thus $(\bff{m}_h^j, \bff{s}_h^j)$ exists for all $j\leq n$ (by Proposition~\ref{pro:exist}).
Then for any $j\leq n$,
\begin{align}\label{equ:num stab}
	\norm{\bff{m}_h^j}{\bb{L}^2}^2
	+
	\norm{\bff{s}_h^j}{\bb{L}^2}^2
	+
	k \sum_{i=1}^j \left(\norm{\bff{m}_h^i}{\bb{H}^1}^2 + \norm{\bff{s}_h^i}{\bb{H}^1}^2 \right)
	\leq
	C_T,
\end{align}
where $C_T$ is a constant depending on $T$, but is independent of $n$, $j$, $h$, or $k$.
\end{lemma}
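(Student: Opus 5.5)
We test the two discrete equations with the unknowns themselves and sum over time steps; the decoupled structure lets us handle $\bff{m}_h$ first and then $\bff{s}_h$. The main tool is the identity
\[
2k\inpro{\dtt \bff{v}^i}{\bff{v}^i} = \norm{\bff{v}^i}{\bb{L}^2}^2 - \norm{\bff{v}^{i-1}}{\bb{L}^2}^2 + \norm{\bff{v}^i-\bff{v}^{i-1}}{\bb{L}^2}^2 .
\]

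\textbf{Step 1 (the $\bff{m}_h$ equation).} Take $\bff{\phi}_h=\bff{m}_h^i$ in \eqref{equ:scheme m}. Two terms drop out pointwise by orthogonality of the cross product: $\inpro{\bff{m}_h^{i-1}\times\nabla\bff{m}_h^i}{\nabla\bff{m}_h^i}=0$ and $\inpro{\bff{m}_h^i\times\bff{s}_h^{i-1}}{\bff{m}_h^i}=0$. The quartic-type term $\inpro{|\bff{m}_h^{i-1}|^2\bff{m}_h^i}{\bff{m}_h^i}$ is nonnegative. This gives
\[
\norm{\bff{m}_h^i}{\bb{L}^2}^2-\norm{\bff{m}_h^{i-1}}{\bb{L}^2}^2+2k\norm{\bff{m}_h^i}{\bb{H}^1}^2\le 0.
\]
Summing over $i\le j$ and using $\norm{P_h\bff{m}_0}{\bb{L}^2}\le C\norm{\bff{m}_0}{\bb{L}^2}$ from \eqref{equ:Ph Lp stab} bounds $\norm{\bff{m}_h^j}{\bb{L}^2}^2+k\sum_{i\le j}\norm{\bff{m}_h^i}{\bb{H}^1}^2$. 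No smallness is needed for this step.

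\textbf{Step 2 (the $\bff{s}_h$ equation).} Take $\bff{\psi}_h=\bff{s}_h^i$ in \eqref{equ:scheme s}. The term $\inpro{D_0\bff{s}_h^i\times\bff{m}_h^{i-1}}{\bff{s}_h^i}$ vanishes, again by orthogonality of the cross product. Because \eqref{equ:max jn m criter} holds, the argument used for the coercivity estimate \eqref{equ:A2 coercive} applies with $\bff{\phi}=\bff{m}_h^{i-1}$. Hence
\[
\mathcal{D}_2(\bff{s}_h^i,\bff{s}_h^i)+\mathcal{B}_2(\bff{m}_h^{i-1},\bff{m}_h^{i-1};\bff{s}_h^i,\bff{s}_h^i)\ge \tfrac{D_\ast}{2}\norm{\bff{s}_h^i}{\bb{H}^1}^2 .
\]
For the source term we use \eqref{equ: H1 H1 ineq} followed by Young's inequality:
\[
\abs{\inpro{\bff{m}_h^{i-1}\otimes\bff{j}^i}{\nabla\bff{s}_h^i}}\le \norm{\bff{j}}{L^\infty(\Omega_T)}\norm{\bff{m}_h^{i-1}}{\bb{L}^2}\norm{\nabla\bff{s}_h^i}{\bb{L}^2}\le \tfrac{D_\ast}{4}\norm{\nabla\bff{s}_h^i}{\bb{L}^2}^2+C\norm{\bff{m}_h^{i-1}}{\bb{L}^2}^2 .
\]
Absorb the first piece into the left-hand side and sum over $i\le j$. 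The remainder $Ck\sum_{i\le j}\norm{\bff{m}_h^{i-1}}{\bb{L}^2}^2$ is at most $C T\max_i\norm{\bff{m}_h^{i}}{\bb{L}^2}^2\le C_T$ by Step 1. Together with $\norm{P_h\bff{s}_0}{\bb{L}^2}\le C\norm{\bff{s}_0}{\bb{L}^2}$, this gives the bound on $\norm{\bff{s}_h^j}{\bb{L}^2}^2+k\sum_{i\le j}\norm{\bff{s}_h^i}{\bb{H}^1}^2$. No discrete Gronwall inequality is required, so $C_T$ grows only linearly in $T$.

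\textbf{Expected obstacle.} The only delicate step is the coercivity in Step 2. It relies on the \emph{previous} iterate $\bff{m}_h^{i-1}$ satisfying the $\bb{L}^\infty$ bound, which is exactly what \eqref{equ:max jn m criter} supplies for every $i\le n$. It also uses $D_\ast-\beta D^\ast\norm{\bff{m}_h^{i-1}}{\bb{L}^\infty}^2\ge D_\ast/2$; the tensor term is bounded pointwise by $|(\bff{\phi}\otimes\bff{\phi})\nabla\bff{v}:\nabla\bff{v}|\le|\bff{\phi}|^2|\nabla\bff{v}|^2$. Everything else is routine.
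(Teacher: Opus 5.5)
Your proposal is correct and is essentially the paper's argument. You test with $\bff{m}_h^i$ and $\bff{s}_h^i$, drop the cross-product terms, use the coercivity of $\mathcal{A}_1$ and, under \eqref{equ:max jn m criter}, of $\mathcal{A}_2(\bff{m}_h^{i-1};\cdot,\cdot)$, and bound the source term $\mathcal{L}_2$ by H\"older and Young, all as in the paper.

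The only difference is organisational. The paper adds the two tested equations and closes with the discrete Gronwall lemma, which gives a constant of size $e^{CT}$. You instead notice that the $\bff{m}_h$-energy inequality does not involve $\bff{s}_h$, so you can treat the equations one after the other. This avoids Gronwall and gives only linear growth in $T$. In fact, since $\norm{\bff{m}_h^i}{\bb{H}^1}\geq\norm{\bff{m}_h^i}{\bb{L}^2}$, your Step 1 yields $(1+2k)\norm{\bff{m}_h^i}{\bb{L}^2}^2\leq\norm{\bff{m}_h^{i-1}}{\bb{L}^2}^2$, so the bound is even uniform in $T$.
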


\begin{proof}
We replace the index $n$ with $j$, and set $\bff{\phi}_h=\bff{m}_h^j$ and $\bff{\psi}_h=\bff{s}_h^j$ in~\eqref{equ:weak m disc} and \eqref{equ:weak s disc}, respectively, then add the resulting equations. Noting the elementary vector identities
\begin{align*}
(\bff{a}\times \bff{b})\cdot \bff{a} =0
\;\text{ and }\;
2(\bff{a}-\bff{b})\cdot \bff{a} =  \abs{\bff{a}}^2-\abs{\bff{b}}^2 + \abs{\bff{a}-\bff{b}}^2,
\end{align*}
and applying \eqref{equ:A1 coercive} and \eqref{equ:A2 coercive}, we obtain after discarding some non-negative terms, that for any $j\leq n$,
\begin{align*}
	&\frac{1}{2k} \left( \norm{\bff{m}_h^j}{\bb{L}^2}^2 - \norm{\bff{m}_h^{j-1}}{\bb{L}^2}^2 \right) 
	+
	\frac{1}{2k} \left( \norm{\bff{s}_h^j}{\bb{L}^2}^2 - \norm{\bff{s}_h^{j-1}}{\bb{L}^2}^2 \right) 
	+
	\mu_1 \norm{\bff{m}_h^j}{\bb{H}^1}^2
	+
	\mu_2 \norm{\bff{s}_h^j}{\bb{H}^1}^2
	\\
	&\leq
	\abs{\mathcal{L}_2(\bff{j}^j; \bff{m}_h^{j-1}, \bff{s}_h^j)}
	\\
	&\leq
	C\norm{\bff{j}^j}{\bb{L}^\infty} \norm{\bff{m}_h^{j-1}}{\bb{L}^2} \norm{\bff{s}_h^j}{\bb{H}^1}
	\leq
	C\norm{\bff{m}_h^{j-1}}{\bb{L}^2}^2
	+
	\frac{\mu_2}{2} \norm{\bff{s}_h^j}{\bb{H}^1}^2.
\end{align*}
In the last line, we used \eqref{equ: H1 H1 ineq} and Young's inequality. The required result then follows by the discrete Gronwall lemma.
\end{proof}

Some inequalities for nonlinear terms which are needed to derive auxiliary error estimates will be shown next.

\begin{lemma}\label{lem:nonlinear est}
{Suppose that $(\bff{m},\bff{s})$ is a strong solution to~\eqref{equ:sdllb} satisfying the regularity assumption~\eqref{equ:assum}, and let $(\bff{m}_h^n,\bff{s}_h^n)$ be the sequence defined by Algorithm~\ref{alg:scheme}.} Let the bilinear forms be those introduced in Definition~\ref{def:bilinear}. Given $\epsilon>0$ and $\bff{\chi}\in \bb{H}^1$, the following estimates hold: 
\begin{align}
	\label{equ:B1 est}
	&\abs{\mathcal{B}_1(\bff{m}_h^{n-1},\bff{m}_h^{n-1}; \Pi_h \bff{m}^n, \bff{\chi})- \mathcal{B}_1(\bff{m}^n,\bff{m}^n; \Pi_h \bff{m}^n, \bff{\chi})}
	\nonumber\\
	&\quad\leq 
	C\left(1+ \norm{\bff{m}_h^{n-1}}{\bb{L}^4}^2 \right) \left(\norm{\bff{\theta}^{n-1}}{\bb{L}^2}^2+ h^{2(r+1)}+k^2 \right)
	+
	\epsilon \norm{\bff{\chi}}{\bb{H}^1}^2,
	\\
	\label{equ:C1 est}
	&\abs{\mathcal{C}_1(\bff{m}_h^{n-1}; \Pi_h \bff{m}^n, \bff{\chi})- \mathcal{C}_1(\bff{m}^n; \Pi_h \bff{m}^n, \bff{\chi})}
	\nonumber\\
	&\quad\leq
	C\left(1+ \norm{\bff{m}_h^{n-1}}{\bb{L}^4}^2 \right) \left(\norm{\bff{\theta}^{n-1}}{\bb{L}^2}^2+ h^{2(r+1)}+k^2 \right)
	+
	\epsilon \norm{\bff{\chi}}{\bb{H}^1}^2,
	\\
	\label{equ:B2 est}
	&\abs{\mathcal{B}_2(\bff{m}_h^{n-1},\bff{m}_h^{n-1}; \Lambda_h \bff{s}^n, \bff{\chi})- \mathcal{B}_2(\bff{m}^n,\bff{m}^n; \Lambda_h \bff{s}^n, \bff{\chi})}
	\nonumber\\
	&\quad\leq
	C\left(1+ \norm{\bff{m}_h^{n-1}}{\bb{L}^\infty}^2 \right) \left(\norm{\bff{\theta}^{n-1}}{\bb{L}^2}^2+ h^{2(r+1)}+k^2 \right)  
	+
	\epsilon \norm{\bff{\chi}}{\bb{H}^1}^2, 
	\\
	\label{equ:C2 est}
	&\abs{\mathcal{C}_2(\bff{m}_h^{n-1}; \Lambda_h \bff{s}^n, \bff{\chi})- \mathcal{C}_2(\bff{m}^n; \Lambda_h \bff{s}^n, \bff{\chi})}
	\nonumber\\
	&\quad\leq
	C \left(\norm{\bff{\theta}^{n-1}}{\bb{L}^2}^2+ h^{2(r+1)}+k^2 \right)
	+
	\epsilon \norm{\bff{\chi}}{\bb{L}^2}^2.
\end{align}
Furthermore, with $\bff{\theta}^n$ and $\bff{\xi}^n$ as defined in~\eqref{equ:theta rho} and \eqref{equ:xi eta}, respectively, we also have
\begin{align}
	\label{equ:L1 est}
	\abs{\mathcal{L}_1(\bff{s}_h^{n-1};\bff{m}_h^n,\bff{\theta}^n) - \mathcal{L}_1(\bff{s}^n;\bff{m}^n,\bff{\theta}^n)}
	&\leq
	C \left(1+ \norm{\bff{s}_h^{n-1}}{\bb{L}^4}^2 \right) (h^{2(r+1)}+k^2)
	\nonumber\\
	&\quad
	+
	C\norm{\bff{\xi}^{n-1}}{\bb{L}^2}^2
	+
	\epsilon \norm{\bff{\theta}^n}{\bb{H}^1}^2,
	\\
	\label{equ:L2 est}
	\abs{\mathcal{L}_2(\bff{j}^n;\bff{m}_h^{n-1},\bff{\xi}^n) - \mathcal{L}_2(\bff{j}^n;\bff{m}^n,\bff{\xi}^n)}
	&\leq
	C\norm{\bff{\theta}^{n-1}}{\bb{L}^2}^2 + C(h^{2(r+1)}+k^2)
	+
	\epsilon \norm{\bff{\xi}^n}{\bb{H}^1}^2,
\end{align}
where $C$ depends on $\epsilon$, but is independent of $n$, $h$, or $k$.
\end{lemma}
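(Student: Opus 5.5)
The plan is to reduce every estimate to one basic splitting of the difference between the lagged discrete coefficient and the exact one at $t_n$:
\begin{align*}
	\bff{m}_h^{n-1}-\bff{m}^n
	=
	\bff{\theta}^{n-1}+\bff{\rho}^{n-1}+(\bff{m}^{n-1}-\bff{m}^n),
\end{align*}
and similarly $\bff{s}_h^{n-1}-\bff{s}^n=\bff{\xi}^{n-1}+\bff{\eta}^{n-1}+(\bff{s}^{n-1}-\bff{s}^n)$. By \eqref{equ:rho t est} and \eqref{equ:eta t est} we have $\norm{\bff{\rho}^{n-1}}{\bb{L}^2}+\norm{\bff{\eta}^{n-1}}{\bb{L}^2}\leq Ch^{r+1}$. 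The regularity \eqref{equ:assum} gives $\norm{\bff{m}^{n-1}-\bff{m}^n}{\bb{H}^1}+\norm{\bff{s}^{n-1}-\bff{s}^n}{\bb{H}^1}\leq Ck$. Hence, in every term the ``difference'' factor is controlled in $\bb{L}^2$ by $\norm{\bff{\theta}^{n-1}}{\bb{L}^2}+h^{r+1}+k$, or with $\bff{\xi}^{n-1}$ in place of $\bff{\theta}^{n-1}$. The remaining factors must be placed in $\bb{L}^\infty$ or $\bb{W}^{1,\infty}$. For this we use Lemma~\ref{lem:stab elliptic}, which gives $\norm{\Pi_h\bff{m}^n}{\bb{W}^{1,\infty}}+\norm{\Lambda_h\bff{s}^n}{\bb{W}^{1,\infty}}\leq C$, together with the bounds $\norm{\bff{m}}{L^\infty_T(\bb{W}^{1,\infty})}$ and $\norm{\bff{s}}{L^\infty_T(\bb{W}^{1,\infty})}$ from $\bb{H}^3\hookrightarrow\bb{W}^{1,\infty}$. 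Every estimate then closes with Young's inequality, with $\epsilon$ in front of $\norm{\bff{\chi}}{\bb{H}^1}^2$ (or $\norm{\bff{\theta}^n}{\bb{H}^1}^2$, $\norm{\bff{\xi}^n}{\bb{H}^1}^2$).

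\textbf{Estimates \eqref{equ:C1 est}, \eqref{equ:C2 est}, \eqref{equ:L2 est}.} These are linear in the difference, so they are direct:
\begin{align*}
	\abs{\mathcal{C}_1(\bff{m}_h^{n-1}-\bff{m}^n;\Pi_h\bff{m}^n,\bff{\chi})}
	\leq
	\norm{\bff{m}_h^{n-1}-\bff{m}^n}{\bb{L}^2}\norm{\nabla\Pi_h\bff{m}^n}{\bb{L}^\infty}\norm{\nabla\bff{\chi}}{\bb{L}^2}.
\end{align*}
The same H\"older bound handles $\mathcal{C}_2$, using $D_0\leq D^\ast$ and $\norm{\Lambda_h\bff{s}^n}{\bb{L}^\infty}$. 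It also handles $\mathcal{L}_2$ via \eqref{equ: H1 H1 ineq} with $\bff{j}^n\in\bb{L}^\infty$. These give bounds without the prefactor $(1+\norm{\bff{m}_h^{n-1}}{\bb{L}^4}^2)$, which is harmless.

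\textbf{Estimates \eqref{equ:B1 est}, \eqref{equ:B2 est}.} These are quadratic in the coefficient. I write
\[
(\bff{m}_h^{n-1}\cdot\bff{m}_h^{n-1})-(\bff{m}^n\cdot\bff{m}^n)=(\bff{m}_h^{n-1}-\bff{m}^n)\cdot(\bff{m}_h^{n-1}+\bff{m}^n),
\]
and analogously split the tensor $\bff{m}_h^{n-1}\otimes\bff{m}_h^{n-1}-\bff{m}^n\otimes\bff{m}^n$.

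For $\mathcal{B}_1$, I bound
\[
\norm{\bff{m}_h^{n-1}-\bff{m}^n}{\bb{L}^2}\norm{\bff{m}_h^{n-1}+\bff{m}^n}{\bb{L}^4}\norm{\Pi_h\bff{m}^n}{\bb{L}^\infty}\norm{\bff{\chi}}{\bb{L}^4},
\]
then use $\bb{H}^1\hookrightarrow\bb{L}^4$ for $\bff{\chi}$ and Young's inequality. This yields exactly the factor $(1+\norm{\bff{m}_h^{n-1}}{\bb{L}^4}^2)$.

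For $\mathcal{B}_2$ the test function appears only through $\nabla\bff{\chi}$, and $\nabla\Lambda_h\bff{s}^n\in\bb{L}^\infty$. So I use
\[
\norm{\bff{m}_h^{n-1}-\bff{m}^n}{\bb{L}^2}\bigl(\norm{\bff{m}_h^{n-1}}{\bb{L}^\infty}+\norm{\bff{m}^n}{\bb{L}^\infty}\bigr)\norm{\nabla\Lambda_h\bff{s}^n}{\bb{L}^\infty}\norm{\nabla\bff{\chi}}{\bb{L}^2},
\]
which produces the $\bb{L}^\infty$ prefactor.

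\textbf{Estimate \eqref{equ:L1 est}.} This is the one step needing care, because both arguments change. I split
\[
\mathcal{L}_1(\bff{s}_h^{n-1};\bff{m}_h^n,\bff{\theta}^n)-\mathcal{L}_1(\bff{s}^n;\bff{m}^n,\bff{\theta}^n)
=\mathcal{L}_1(\bff{s}_h^{n-1};\bff{m}_h^n-\bff{m}^n,\bff{\theta}^n)+\mathcal{L}_1(\bff{s}_h^{n-1}-\bff{s}^n;\bff{m}^n,\bff{\theta}^n).
\]
The second term is bounded by $\norm{\bff{m}^n}{\bb{L}^\infty}(\norm{\bff{\xi}^{n-1}}{\bb{L}^2}+Ch^{r+1}+Ck)\norm{\bff{\theta}^n}{\bb{L}^2}$.

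In the first term, $\bff{m}_h^n-\bff{m}^n=\bff{\theta}^n+\bff{\rho}^n$. The $\bff{\theta}^n$ contribution vanishes identically, since $(\bff{\theta}^n\times\bff{s}_h^{n-1})\cdot\bff{\theta}^n=0$; this cancellation is the key point. The $\bff{\rho}^n$ contribution is bounded by
\[
\norm{\bff{\rho}^n}{\bb{L}^2}\norm{\bff{s}_h^{n-1}}{\bb{L}^4}\norm{\bff{\theta}^n}{\bb{L}^4}\leq Ch^{r+1}\norm{\bff{s}_h^{n-1}}{\bb{L}^4}\norm{\bff{\theta}^n}{\bb{H}^1}.
\]
Young's inequality then gives $C\norm{\bff{s}_h^{n-1}}{\bb{L}^4}^2h^{2(r+1)}+\epsilon\norm{\bff{\theta}^n}{\bb{H}^1}^2$.

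The main obstacle is this last term. Without the cross-product cancellation, the $\bff{\theta}^n$ contribution would require an $\bb{L}^\infty$ bound on $\bff{s}_h^{n-1}$ that is not available at this stage. Everything else is routine H\"older and Young.
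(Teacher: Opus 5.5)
Your proposal is correct and follows the paper's proof essentially step for step. You use the same splitting $\bff{m}_h^{n-1}-\bff{m}^n=\bff{\theta}^{n-1}+\bff{\rho}^{n-1}-k\,\dtt\bff{m}^n$, the same H\"older placements (including $\bb{L}^2\cdot\bb{L}^4\cdot\bb{L}^\infty\cdot\bb{L}^4$ for $\mathcal{B}_1$), the $\bb{L}^\infty$/$\bb{W}^{1,\infty}$ bounds on $\Pi_h\bff{m}^n$ and $\Lambda_h\bff{s}^n$ from Lemma~\ref{lem:stab elliptic}, and the same cancellation $(\bff{\theta}^n\times\bff{s}_h^{n-1})\cdot\bff{\theta}^n=0$ in \eqref{equ:L1 est}.
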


\begin{proof}
We will refer to the following identities several times in the proof:
\begin{align}
	\label{equ:mhn1 min mn}
	\bff{m}_h^{n-1}- \bff{m}^n 
	&= \bff{\theta}^{n-1} + \bff{\rho}^{n-1} - k\cdot \mathrm{d}_t \bff{m}^n,
	\\
	\label{equ:shn1 min sn}
	\bff{s}_h^{n-1}- \bff{s}^n 
	&= \bff{\xi}^{n-1} + \bff{\eta}^{n-1} - k\cdot \mathrm{d}_t \bff{s}^n.
\end{align}
First, we prove~\eqref{equ:B1 est}.
By H\"older's inequality, \eqref{equ:mhn1 min mn}, and~\eqref{equ:rho t est}, we have
\begin{align}\label{equ:un2 L43}
	\norm{|\bff{m}_h^{n-1}|^2 - |\bff{m}^n|^2}{\bb{L}^{4/3}}
	&=
	\norm{(\bff{m}_h^{n-1}+\bff{m}^n) \cdot (\bff{m}_h^{n-1}-\bff{m}^n)}{\bb{L}^{4/3}}
	\nonumber\\
	&\leq
	\norm{\bff{m}_h^{n-1}+\bff{m}^n}{\bb{L}^4} \norm{\bff{\theta}^{n-1} + \bff{\rho}^{n-1} - k\cdot \mathrm{d}_t \bff{m}^n}{\bb{L}^2}
	\nonumber\\
	&\leq
	C\left(1+ \norm{\bff{m}_h^{n-1}}{\bb{L}^4} \right) \left(\norm{\bff{\theta}^{n-1}}{\bb{L}^2} + h^{r+1}+k \right).
\end{align}
We then obtain for any $\epsilon>0$,
\begin{align*}
	&\abs{\mathcal{B}_1(\bff{m}_h^{n-1},\bff{m}_h^{n-1}; \Pi_h \bff{m}^n, \bff{\chi})- \mathcal{B}_1(\bff{m}^n,\bff{m}^n; \Pi_h \bff{m}^n, \bff{\chi})}
	\\
	&=
	\abs{ \inpro{(|\bff{m}_h^{n-1}|^2- |\bff{m}^{n}|^2) \Pi_h \bff{m}^n}{\bff{\chi}} }
	\\
	&\leq
	\norm{|\bff{m}_h^{n-1}|^2 - |\bff{m}^n|^2}{\bb{L}^{4/3}} \norm{\Pi_h \bff{m}^n}{\bb{L}^\infty} \norm{\bff{\chi}}{\bb{L}^4}
	\\
	&\leq
	C\left(1+ \norm{\bff{m}_h^{n-1}}{\bb{L}^4}^2 \right) \left(\norm{\bff{\theta}^{n-1}}{\bb{L}^2}^2+ h^{2(r+1)}+k^2 \right)
	+
	\epsilon \norm{\bff{\chi}}{\bb{H}^1}^2,
\end{align*}
where in the last step we used~\eqref{equ:un2 L43}, Young's inequality, and Lemma~\ref{lem:stab elliptic} (noting the Sobolev embedding $\bb{H}^1\hookrightarrow \bb{L}^4$). This proves~\eqref{equ:B1 est}.

Next, we prove~\eqref{equ:C1 est}. By \eqref{equ:mhn1 min mn}, Young's inequality, and Lemma~\ref{lem:stab elliptic}, we have for any $\epsilon>0$,
\begin{align*}
	\abs{\mathcal{C}_1(\bff{m}_h^{n-1}; \Pi_h \bff{m}^n, \bff{\chi}) - \mathcal{C}_1(\bff{m}^n; \Pi_h \bff{m}^n, \bff{\chi}) }
	&\leq
	\abs{ \inpro{(\bff{m}_h^{n-1}-\bff{m}^n) \times \nabla \Pi_h \bff{m}^n}{\nabla \bff{\chi}} }
	\\
	&\leq
	C \norm{\bff{m}_h^{n-1}- \bff{m}^n}{\bb{L}^2} \norm{\Pi_h \bff{m}^n}{\bb{W}^{1,\infty}} \norm{\bff{\chi}}{\bb{H}^1}
	\\
	&\leq
	C(h^{2(r+1)} + k^2) + C\norm{\bff{\theta}^{n-1}}{\bb{L}^2}^2
	+
	\epsilon \norm{\bff{\chi}}{\bb{H}^1}^2,
\end{align*}
where in the last step we used~\eqref{equ:rho t est}.

Inequality~\eqref{equ:B2 est} can be estimated by similar argument as follows.
\begin{align*}
	&\abs{\mathcal{B}_2(\bff{m}_h^{n-1},\bff{m}_h^{n-1}; \Lambda_h \bff{s}^n, \bff{\chi})- \mathcal{B}_2(\bff{m}^n,\bff{m}^n; \Lambda_h \bff{s}^n, \bff{\chi})}
	\\
	&\leq
	\abs{\mathcal{B}_2(\bff{m}_h^{n-1}-\bff{m}^n, \bff{m}_h^{n-1}; \Lambda_h \bff{s}^n, \bff{\chi})}
	+
	\abs{\mathcal{B}_2(\bff{m}^n, \bff{m}_h^{n-1}-\bff{m}^n; \Lambda_h \bff{s}^n, \bff{\chi})}
	\\
	&\leq
	C \norm{\bff{m}_h^{n-1}-\bff{m}^n}{\bb{L}^2} \norm{\bff{m}_h^{n-1}}{\bb{L}^\infty} \norm{\Lambda_h \bff{s}^n}{\bb{W}^{1,\infty}} \norm{\bff{\chi}}{\bb{H}^1}
	\\
	&\quad
	+
	C\norm{\bff{m}^n}{\bb{L}^\infty} \norm{\bff{m}_h^{n-1}-\bff{m}^n}{\bb{L}^2} \norm{\Lambda_h \bff{s}^n}{\bb{W}^{1,\infty}} \norm{\bff{\chi}}{\bb{H}^1}
	\\
	&\leq
	C\left(1+ \norm{\bff{m}_h^{n-1}}{\bb{L}^\infty}^2 \right) \left(\norm{\bff{\theta}^{n-1}}{\bb{L}^2}^2+ h^{2(r+1)}+k^2 \right)  
	+
	\epsilon \norm{\bff{\chi}}{\bb{H}^1}^2.
\end{align*}
The proof of~\eqref{equ:C2 est} proceeds in a similar way as that of~\eqref{equ:C1 est}.

By similar argument, noting~\eqref{equ:shn1 min sn} and the fact that $(\bff{a}\times\bff{b})\cdot \bff{a}=0$, we have
\begin{align*}
	&\abs{\mathcal{L}_1(\bff{s}_h^{n-1};\bff{m}_h^n,\bff{\theta}^n) - \mathcal{L}_1(\bff{s}^n;\bff{m}^n,\bff{\theta}^n)}
	\\
	&\leq
	\abs{\mathcal{L}_1(\bff{s}_h^{n-1};\bff{m}_h^n-\bff{m}^n,\bff{\theta}^n) }
	+
	\abs{\mathcal{L}_1(\bff{s}_h^{n-1}-\bff{s}^n; \bff{m}^n,\bff{\theta}^n) }
	\\
	&=
	\abs{\inpro{\bff{\rho}^n\times \bff{s}_h^{n-1}}{\bff{\theta}^n}}
	+
	\abs{\inpro{\bff{m}^n \times (\bff{s}_h^{n-1}-\bff{s}^n)}{\bff{\theta}^n}}
	\\
	&\leq
	\norm{\bff{\rho}^n}{\bb{L}^2} \norm{\bff{s}_h^{n-1}}{\bb{L}^4} \norm{\bff{\theta}^n}{\bb{L}^4}
	+
	\norm{\bff{m}^n}{\bb{L}^\infty} \norm{\bff{s}_h^{n-1}-\bff{s}^n}{\bb{L}^2} \norm{\bff{\theta}^n}{\bb{L}^2}
	\\
	&\leq
	C \left(1+ \norm{\bff{s}_h^{n-1}}{\bb{L}^4}^2 \right) (h^{2(r+1)}+k^2)
	+
	C\norm{\bff{\xi}^{n-1}}{\bb{L}^2}^2
	+
	\epsilon \norm{\bff{\theta}^n}{\bb{H}^1}^2,
\end{align*}
which shows~\eqref{equ:L1 est}. Finally, we show~\eqref{equ:L2 est}. By \eqref{equ: H1 H1 ineq}, H\"older's and Young's inequalities, noting~\eqref{equ:rho t est} and~\eqref{equ:mhn1 min mn}, we obtain for any $\epsilon>0$,
\begin{align*}
	\abs{\mathcal{L}_2(\bff{j}^n;\bff{m}_h^{n-1},\bff{\xi}^n) - \mathcal{L}_2(\bff{j}^n;\bff{m}^n,\bff{\xi}^n)}
	&\leq 
	\norm{\bff{j}^n}{\bb{L}^\infty} \norm{\bff{m}_h^{n-1}-\bff{m}^n}{\bb{L}^2} \norm{ \bff{\xi}^n}{\bb{H}^1}
	\\
	&\leq
	C\norm{\bff{\theta}^{n-1}}{\bb{L}^2}^2 + C(h^{2(r+1)}+k^2)
	+
	\epsilon \norm{\bff{\xi}^n}{\bb{H}^1}^2,
\end{align*}
which shows~\eqref{equ:L2 est}. This completes the proof of the lemma.
\end{proof}

We are now ready to show auxiliary error estimates for the numerical scheme proposed in Algorithm~\ref{alg:scheme}. In the proof, we use the following inequalities:
\begin{align}\label{equ:d rho n}
	\norm{\dtt \bff{\rho}^n}{\bb{L}^2} 
	&= 
	\norm{\frac{1}{k} \int_{t_{n-1}}^{t_n} \partial_t \bff{\rho}(t) \,\dt}{\bb{L}^2} 
	\leq 
	C h^{r+1},
	\\
	\label{equ:diff un dt un}
	\norm{\dtt \bff{m}^n - \partial_t \bff{m}^n}{\bb{L}^2} 
	&= 
	\norm{\frac{1}{2k} \int_{t_{n-1}}^{t_n} (t- t_{n-1}) \, \partial_{t}^2 \bff{m}(t)\,\dt }{\bb{L}^2} 
	\leq 
	Ck.
\end{align}
Similar estimates hold for $\norm{\dtt \bff{\eta}^n}{\bb{L}^2}$ and $\norm{\dtt \bff{s}^n - \partial_t \bff{s}^n}{\bb{L}^2}$.

\begin{proposition}
Let $n\leq N$, and let $\bff{\theta}^n$ and $\bff{\xi}^n$ be as defined in \eqref{equ:theta rho} and~\eqref{equ:xi eta}, respectively. {Assume that \eqref{equ:max jn m criter} holds and write $\delta:= D_\ast/ (2\beta D^\ast)$. }
Then
\begin{align}\label{equ:theta xi err}
	\norm{\bff{\theta}^n}{\bb{L}^2}^2
	+
	\norm{\bff{\xi}^n}{\bb{L}^2}^2
	+
	k \sum_{j=1}^n \left( \norm{\nabla \bff{\theta}^j}{\bb{L}^2}^2 + \norm{\nabla \bff{\xi}^j}{\bb{L}^2}^2 \right)
	\leq
	\widetilde{C} e^{\widetilde{C} (1+\delta)T} (h^{2(r+1)}+k^2),
\end{align}
where $\widetilde{C}$ is a constant independent of $n$, $h$ or $k$.
\end{proposition}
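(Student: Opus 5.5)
We derive error equations for $\bff{\theta}^j$ and $\bff{\xi}^j$ and close them with the discrete Gronwall lemma. For each $j\leq n$, subtract the continuous weak forms \eqref{equ:weak m cont} and \eqref{equ:weak s cont}, evaluated at $t_j$ and tested with $\bff{\phi}_h,\bff{\psi}_h\in\bb{V}_h$, from the discrete ones \eqref{equ:weak m disc} and \eqref{equ:weak s disc}. We then insert the splittings \eqref{equ:theta rho} and \eqref{equ:xi eta}. By the defining property \eqref{equ:elliptic proj}, $\mathcal{A}_1(\bff{m}^j;\bff{\rho}^j,\cdot)$ and $\mathcal{A}_2(\bff{m}^j;\bff{\eta}^j,\cdot)$ vanish on $\bb{V}_h$. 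This leaves
\begin{align*}
	\inpro{\dtt\bff{\theta}^j}{\bff{\phi}_h}+\mathcal{A}_1(\bff{m}_h^{j-1};\bff{\theta}^j,\bff{\phi}_h)
	&=
	-\inpro{\dtt\bff{\rho}^j}{\bff{\phi}_h}
	-\inpro{\dtt\bff{m}^j-\partial_t\bff{m}^j}{\bff{\phi}_h}
	\\
	&\quad
	-\big[\mathcal{A}_1(\bff{m}_h^{j-1};\Pi_h\bff{m}^j,\bff{\phi}_h)-\mathcal{A}_1(\bff{m}^j;\Pi_h\bff{m}^j,\bff{\phi}_h)\big]
	\\
	&\quad
	-\big[\mathcal{L}_1(\bff{s}_h^{j-1};\bff{m}_h^j,\bff{\phi}_h)-\mathcal{L}_1(\bff{s}^j;\bff{m}^j,\bff{\phi}_h)\big].
\end{align*}
The analogous identity holds for $\bff{\xi}^j$, with $\mathcal{A}_2$, $\Lambda_h\bff{s}^j$ and the $\mathcal{L}_2$-difference in place of $\mathcal{A}_1$, $\Pi_h\bff{m}^j$ and the $\mathcal{L}_1$-difference. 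The middle bracket splits as a $\mathcal{B}_1$-difference plus a $\mathcal{C}_1$-difference, because $\mathcal{D}_1$ does not depend on $\bff{\phi}$; likewise for index $2$.

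Next, test with $\bff{\phi}_h=\bff{\theta}^j$ and $\bff{\psi}_h=\bff{\xi}^j$. The identity $2(\bff{a}-\bff{b})\cdot\bff{a}=\abs{\bff{a}}^2-\abs{\bff{b}}^2+\abs{\bff{a}-\bff{b}}^2$ gives discrete time derivatives of $\norm{\bff{\theta}^j}{\bb{L}^2}^2$ and $\norm{\bff{\xi}^j}{\bb{L}^2}^2$. Coercivity \eqref{equ:A1 coercive} gives $\mu_1\norm{\bff{\theta}^j}{\bb{H}^1}^2$. Coercivity \eqref{equ:A2 coercive} applies with $\bff{\phi}=\bff{m}_h^{j-1}$, which is exactly where hypothesis \eqref{equ:max jn m criter} enters, and gives $\mu_2\norm{\bff{\xi}^j}{\bb{H}^1}^2$. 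On the right-hand side:
\begin{itemize}
	\item The consistency terms are bounded by \eqref{equ:d rho n}, \eqref{equ:diff un dt un} and their $\bff{\eta}$, $\bff{s}$ analogues, giving $C(h^{2(r+1)}+k^2)+\epsilon\norm{\cdot}{\bb{L}^2}^2$.
	\item The nonlinear differences are bounded directly by Lemma~\ref{lem:nonlinear est}: \eqref{equ:B1 est} and \eqref{equ:C1 est} with $\bff{\chi}=\bff{\theta}^j$, \eqref{equ:B2 est} and \eqref{equ:C2 est} with $\bff{\chi}=\bff{\xi}^j$, and \eqref{equ:L1 est}, \eqref{equ:L2 est}.
\end{itemize}
Choosing $\epsilon$ small, we absorb all $\epsilon\norm{\cdot}{\bb{H}^1}^2$ terms into the left-hand side. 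After multiplying by $2k$ and summing over $j=1,\dots,n$, we obtain
\begin{align*}
	\norm{\bff{\theta}^n}{\bb{L}^2}^2+\norm{\bff{\xi}^n}{\bb{L}^2}^2+k\sum_{j=1}^n\big(\norm{\bff{\theta}^j}{\bb{H}^1}^2+\norm{\bff{\xi}^j}{\bb{H}^1}^2\big)
	&\leq
	\norm{\bff{\theta}^0}{\bb{L}^2}^2+\norm{\bff{\xi}^0}{\bb{L}^2}^2
	\\
	&\quad
	+Ck\sum_{j=1}^{n} a_{j-1}\big(\norm{\bff{\theta}^{j-1}}{\bb{L}^2}^2+\norm{\bff{\xi}^{j-1}}{\bb{L}^2}^2\big)
	\\
	&\quad
	+Ck\sum_{j=1}^{n} a_{j-1}\,(h^{2(r+1)}+k^2),
\end{align*}
where
\[
a_{j-1}:=1+\norm{\bff{m}_h^{j-1}}{\bb{L}^\infty}^2+\norm{\bff{m}_h^{j-1}}{\bb{L}^4}^2+\norm{\bff{s}_h^{j-1}}{\bb{L}^4}^2 .
\]
The initial errors are $O(h^{r+1})$, since $\bff{\theta}^0=P_h\bff{m}_0-\Pi_h\bff{m}_0=(P_h\bff{m}_0-\bff{m}_0)-\bff{\rho}^0$; we bound this by \eqref{equ:Ph approx} and \eqref{equ:rho t est}, and treat $\bff{\xi}^0$ the same way.

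The main obstacle is controlling the weights $a_{j-1}$ uniformly, because the discrete Gronwall lemma needs $k\sum_j a_{j-1}\lesssim(1+\delta)T$. The $\bb{L}^\infty$ part is handled by \eqref{equ:max jn m criter}, which gives $\norm{\bff{m}_h^{j-1}}{\bb{L}^\infty}^2<\delta$ and so contributes the factor $(1+\delta)$ in the exponent. The $\bb{L}^4$ parts are handled by the Sobolev embedding $\bb{H}^1\hookrightarrow\bb{L}^4$ together with the stability bound \eqref{equ:num stab}, which gives $k\sum_j(\norm{\bff{m}_h^{j-1}}{\bb{L}^4}^2+\norm{\bff{s}_h^{j-1}}{\bb{L}^4}^2)\leq C_T$ (the $j=1$ term uses $\norm{P_h\bff{m}_0}{\bb{H}^1}$ via \eqref{equ:Ph H1 stab}). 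These weights are only summable, not pointwise bounded, so we must use the form of the discrete Gronwall inequality with summable weights multiplying the previous-step quantities. We also need the lagged index $j-1$, so that no smallness condition on $k\,a_j$ is required; this lag is already built into Lemma~\ref{lem:nonlinear est}. Gronwall then yields \eqref{equ:theta xi err} with a constant $\widetilde{C}e^{\widetilde{C}(1+\delta)T}$. Finally, $\norm{\nabla\cdot}{\bb{L}^2}\leq\norm{\cdot}{\bb{H}^1}$ gives the gradient sum in the stated form.
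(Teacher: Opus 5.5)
Your proposal is correct and follows the paper's proof essentially step for step. It uses the same error equations tested with $\bff{\theta}^j,\bff{\xi}^j$, coercivity under \eqref{equ:max jn m criter}, Lemma~\ref{lem:nonlinear est}, the stability bound \eqref{equ:num stab} via $\bb{H}^1\hookrightarrow\bb{L}^4$, and the discrete Gronwall lemma.

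The only difference is harmless. The paper bounds $\norm{\bff{m}_h^{j-1}}{\bb{L}^4}\lesssim\norm{\bff{m}_h^{j-1}}{\bb{L}^\infty}$ using \eqref{equ:max jn m criter}, so the coefficient of the previous-step errors is simply the constant $C(1+\delta)$. Only $\norm{\bff{s}_h^{j-1}}{\bb{L}^4}^2$ needs summability, and it multiplies $h^{2(r+1)}+k^2$ alone. Your summable-weight version of Gronwall is also valid; it just moves an extra factor $e^{CC_T}$ into $\widetilde{C}$.
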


\begin{proof}
Noting the definitions of $\bff{\theta}^n$ and $\bff{\rho}^n$, the scheme~\eqref{equ:weak m disc}, and the weak formulation~\eqref{equ:weak m cont}, we have for all $\bff{\chi}\in \bb{V}_h$,
\begin{align}\label{equ:A1 theta n}
	&\inpro{\mathrm{d}_t \bff{\theta}^n}{\bff{\chi}}
	+
	\mathcal{A}_1(\bff{m}_h^{n-1};\bff{\theta}^n, \bff{\chi})
	\nonumber\\
	&=
	\inpro{\mathrm{d}_t \bff{m}_h^n}{\bff{\chi}}
	+
	\mathcal{A}_1(\bff{m}_h^{n-1};\bff{m}_h^n, \bff{\chi})
	-
	\inpro{\mathrm{d}_t \Pi_h \bff{m}^n}{\bff{\chi}}
	-
	\mathcal{A}_1(\bff{m}_h^{n-1}; \Pi_h \bff{m}^n, \bff{\chi})
	\nonumber\\
	&=
	\mathcal{L}_1(\bff{s}_h^{n-1}; \bff{m}_h^n, \bff{\chi})
	-
	\inpro{\mathrm{d}_t \Pi_h\bff{m}^n- \partial_t \bff{m}^n}{\bff{\chi}}
	-
	\inpro{\partial_t \bff{m}^n}{\bff{\chi}}
	\nonumber\\
	&\quad
	-
	\left(\mathcal{A}_1(\bff{m}_h^{n-1}; \Pi_h \bff{m}^n, \bff{\chi})- \mathcal{A}_1(\bff{m}^n;\Pi_h \bff{m}^n,\bff{\chi})\right) 
	-
	\mathcal{A}_1(\bff{m}^n; \Pi_h\bff{m}^n, \bff{\chi})
	\nonumber\\
	&=
	\left(\mathcal{L}_1(\bff{s}^n; \bff{m}^n,\bff{\chi}) -\mathcal{L}_1 (\bff{s}_h^{n-1}; \bff{m}_h^n, \bff{\chi}) \right) 
	-
	\inpro{\mathrm{d}_t \bff{\rho}^n}{\bff{\chi}}
	-
	\inpro{\mathrm{d}_t \bff{m}^n-\partial_t \bff{m}^n}{\bff{\chi}}
	\nonumber\\
	&\quad
	-
	\left(\mathcal{A}_1(\bff{m}_h^{n-1}; \Pi_h \bff{m}^n, \bff{\chi})- \mathcal{A}_1(\bff{m}^n;\Pi_h \bff{m}^n,\bff{\chi})\right).
\end{align}
We now take $\bff{\chi}=\bff{\theta}^n$ in~\eqref{equ:A1 theta n}. Applying~\eqref{equ:A1 coercive}, Young's inequality, \eqref{equ:rho t est}, \eqref{equ:dt rho t est}, \eqref{equ:d rho n}, \eqref{equ:diff un dt un}, and Lemma~\ref{lem:nonlinear est}, we have
\begin{align}\label{equ:12k theta n}
	&\frac{1}{2k} \left(\norm{\bff{\theta}^n}{\bb{L}^2}^2 - \norm{\bff{\theta}^{n-1}}{\bb{L}^2}^2 \right)
	+
	\frac{1}{2k} \norm{\bff{\theta}^n- \bff{\theta}^{n-1}}{\bb{L}^2}^2
	+
	\mu_1 \norm{\bff{\theta}^n}{\bb{H}^1}^2
	\nonumber\\
	&\leq
	\abs{\mathcal{L}_1(\bff{s}^n; \bff{m}^n,\bff{\theta}^n) -\mathcal{L}_1 (\bff{s}_h^{n-1}; \bff{m}_h^n, \bff{\theta}^n)}
	+
	\abs{\inpro{\mathrm{d}_t \bff{\rho}^n}{\bff{\theta}^n}}
	\nonumber\\
	&\quad
	+
	\abs{\inpro{\mathrm{d}_t \bff{m}^n-\partial_t \bff{m}^n}{\bff{\theta}^n}}
	+
	\abs{\mathcal{A}_1(\bff{m}_h^{n-1}; \Pi_h \bff{m}^n, \bff{\theta}^n)- \mathcal{A}_1(\bff{m}^n;\Pi_h \bff{m}^n,\bff{\theta}^n)}
	\nonumber\\
	&\leq
	C\left(1+ \norm{\bff{m}_h^{n-1}}{\bb{L}^4}^2 \right) \left(\norm{\bff{\theta}^{n-1}}{\bb{L}^2}^2+ h^{2(r+1)}+k^2 \right)
	+
	C \left(1+ \norm{\bff{s}_h^{n-1}}{\bb{L}^4}^2 \right) (h^{2(r+1)}+k^2)
	\nonumber\\
	&\quad
	+
	C\norm{\bff{\xi}^{n-1}}{\bb{L}^2}^2
	+
	\epsilon \norm{\bff{\theta}^n}{\bb{H}^1}^2.
\end{align}
Analogous to~\eqref{equ:A1 theta n}, noting the definitions of $\bff{\xi}^n$ and $\bff{\eta}^n$, we obtain
\begin{align*}
	&\inpro{\mathrm{d}_t \bff{\xi}^n}{\bff{\chi}}
	+
	\mathcal{A}_2(\bff{m}_h^{n-1};\bff{\xi}^n, \bff{\chi})
	\nonumber\\
	&=
	\left(\mathcal{L}_2(\bff{j}^n; \bff{m}^n,\bff{\chi}) -\mathcal{L}_2 (\bff{j}^n; \bff{m}_h^{n-1}, \bff{\chi}) \right) 
	-
	\inpro{\mathrm{d}_t \bff{\eta}^n}{\bff{\chi}}
	-
	\inpro{\mathrm{d}_t \bff{s}^n-\partial_t \bff{s}^n}{\bff{\chi}}
	\nonumber\\
	&\quad
	-
	\left(\mathcal{A}_2(\bff{m}_h^{n-1}; \Lambda_h \bff{s}^n, \bff{\chi})- \mathcal{A}_2(\bff{m}^n;\Lambda_h \bff{s}^n,\bff{\chi})\right).
\end{align*}
We apply the same argument as in~\eqref{equ:12k theta n}. Taking $\bff{\chi}=\bff{\xi}^n$ and using~\eqref{equ:A2 coercive}, noting the assumption~\eqref{equ:max jn m criter}, we obtain
\begin{align}\label{equ:12k xi n}
	&\frac{1}{2k} \norm{\bff{\xi}^n}{\bb{L}^2}^2
	+
	\mu_2 \norm{\bff{\xi}^n}{\bb{H}^1}^2
	\nonumber\\
	&\leq
	\abs{\mathcal{L}_2(\bff{j}^n; \bff{m}^n,\bff{\xi}^n) -\mathcal{L}_1 (\bff{j}^n; \bff{m}_h^{n-1}, \bff{\xi}^n)}
	+
	\abs{\inpro{\mathrm{d}_t \bff{\eta}^n}{\bff{\xi}^n}}
	\nonumber\\
	&\quad
	+
	\abs{\inpro{\mathrm{d}_t \bff{s}^n-\partial_t \bff{s}^n}{\bff{\xi}^n}}
	+
	\abs{\mathcal{A}_2(\bff{m}_h^{n-1}; \Lambda_h \bff{s}^n, \bff{\xi}^n)- \mathcal{A}_2(\bff{m}^n;\Lambda_h \bff{s}^n,\bff{\xi}^n)}
	\nonumber\\
	&\leq
	C\left(1+ \norm{\bff{m}_h^{n-1}}{\bb{L}^\infty}^2 \right) \left(\norm{\bff{\theta}^{n-1}}{\bb{L}^2}^2+ h^{2(r+1)}+k^2 \right)  
	+
	\epsilon \norm{\bff{\xi}^n}{\bb{H}^1}^2.
\end{align}
Adding \eqref{equ:12k theta n} and \eqref{equ:12k xi n} yields
\begin{align*}
	&\frac{1}{2k} \left( \norm{\bff{\theta}^n}{\bb{L}^2}^2 - \norm{\bff{\theta}^{n-1}}{\bb{L}^2}^2 \right) 
	+\frac{1}{2k} \left(\norm{\bff{\xi}^n}{\bb{L}^2}^2 - \norm{\bff{\xi}^{n-1}}{\bb{L}^2}^2 \right) 
	+
	\mu_1 \norm{\bff{\theta}^n}{\bb{H}^1}^2
	+
	\mu_2 \norm{\bff{\xi}^n}{\bb{H}^1}^2
	\\
	&\leq
	C \left(1+ \norm{\bff{s}_h^{n-1}}{\bb{L}^4}^2 \right) (h^{2(r+1)}+k^2)
	+
	C\left(1+ \norm{\bff{m}_h^{n-1}}{\bb{L}^\infty}^2 \right) \left(\norm{\bff{\theta}^{n-1}}{\bb{L}^2}^2+ h^{2(r+1)}+k^2 \right)  
	\\
	&\quad
	+
	C\norm{\bff{\xi}^{n-1}}{\bb{L}^2}^2
	+
	\epsilon \norm{\bff{\theta}^n}{\bb{H}^1}^2
	+
	\epsilon \norm{\bff{\xi}^n}{\bb{H}^1}^2
	\\
	&\leq
	C\left(1+\delta+ \norm{\bff{s}_h^{n-1}}{\bb{L}^4}^2 \right) \left(h^{2(r+1)}+k^2\right) 
	+
	C(1+\delta) \norm{\bff{\theta}^{n-1}}{\bb{L}^2}^2
	+
	C\norm{\bff{\xi}^{n-1}}{\bb{L}^2}^2
	\\
	&\quad
	+
	\epsilon \norm{\bff{\theta}^n}{\bb{H}^1}^2
	+
	\epsilon \norm{\bff{\xi}^n}{\bb{H}^1}^2,
\end{align*}
where in the last step we used the assumption~\eqref{equ:max jn m criter}. Choosing $\epsilon>0$ sufficiently small, summing over $m\in \{1,2,\ldots,n\}$, and applying the discrete Gronwall lemma, we obtain
\begin{align*}
	&\norm{\bff{\theta}^n}{\bb{L}^2}^2
	+
	\norm{\bff{\xi}^n}{\bb{L}^2}^2
	+
	k \sum_{j=1}^n \left( \norm{\nabla \bff{\theta}^j}{\bb{L}^2}^2 + \norm{\nabla \bff{\xi}^j}{\bb{L}^2}^2 \right)
	\\
	&\leq
	C \left[\norm{\bff{\theta}^0}{\bb{L}^2}^2 + \norm{\bff{\xi}^0}{\bb{L}^2}^2 + k \sum_{j=1}^n \left(1+\delta+ \norm{\bff{s}_h^{j-1}}{\bb{H}^1}^2 \right) (h^{2(r+1)}+k^2) \right] 
	\exp\left(C(1+\delta)T\right)
	\\
	&\leq
	C e^{C(1+\delta)T} (h^{2(r+1)}+k^2),
\end{align*}
where we used the Sobolev embedding $\bb{H}^1\hookrightarrow \bb{L}^4$ and the stability estimate \eqref{equ:num stab}. In the last step, we also used the fact that by the triangle inequality, \eqref{equ:Ph approx}, and \eqref{equ:rho t est},
\begin{align*}
	\frac12 \left(\norm{\bff{\theta}^0}{\bb{L}^2}^2 + \norm{\bff{\xi}^0}{\bb{L}^2}^2\right)
	&\leq
	\norm{P_h \bff{m}_0- \bff{m}_0}{\bb{L}^2}^2 + \norm{\bff{m}_0- \Pi_h \bff{m}^0}{\bb{L}^2}^2
	+
	\norm{P_h \bff{s}_0- \bff{s}_0}{\bb{L}^2}^2 + \norm{\bff{s}_0- \Pi_h \bff{s}^0}{\bb{L}^2}^2
	\\
	&\leq
	Ch^{2(r+1)}.
\end{align*}
This completes the proof of the theorem.
\end{proof}

Finally, we now turn to the proof of the main theorem.

\begin{theorem}\label{the:error}
Let $n\leq N$. Let $(\bff{m},\bff{s})$ be a strong solution to the problem~\eqref{equ:sdllb} with regularity given by~\eqref{equ:assum}. Suppose that the initial data (or $\beta$) is sufficiently small such that
\begin{equation}\label{equ:small m}
	\norm{\bff{m}}{L^\infty_T(\bb{H}^2)}^2 \leq \frac{D_\ast}{18 C_\Pi^2 \beta D^\ast} \;\text{ and }\; 
	\norm{\bff{m}_h^0}{\bb{L}^\infty}^2 \leq \frac{D_\ast}{2 \beta D^\ast}.
\end{equation}
Assume further that $h$ and $k$ are sufficiently small such that $k=O(h^{-d/2})$. {Then the scheme defined by Algorithm~\ref{alg:scheme} is well-posed and stable (in the sense of Lemma~\ref{lem:num stab}). Furthermore, the following error estimates hold:}
\begin{equation}\label{equ:error}
\begin{alignedat}{1}
	\norm{\bff{m}_h^n-\bff{m}(t_n)}{\bb{L}^2}^2
	+
	\norm{\bff{s}_h^n-\bff{s}(t_n)}{\bb{L}^2}^2
	&\leq 
	C\left(h^{2(r+1)} +k^2 \right),
	\\
	k\sum_{j=1}^n \left( \norm{\nabla\bff{m}_h^j-\nabla\bff{m}(t_j)}{\bb{L}^2}^2
	+
	\norm{\nabla\bff{s}_h^j- \nabla\bff{s}(t_j)}{\bb{L}^2}^2 \right) 
	&\leq 
	C\left(h^{2r} +k^2 \right),
\end{alignedat}
\end{equation}
where $C$ depends on $T$, but is independent of $n$, $h$ or $k$.
\end{theorem}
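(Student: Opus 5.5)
The plan is an induction on $n$ that alternates between existence (Proposition~\ref{pro:exist}) and the auxiliary error bound \eqref{equ:theta xi err}, exactly as outlined before Proposition~\ref{pro:exist}.

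\emph{Inductive hypothesis} ($H_n$): $\norm{\bff{m}_h^{j-1}}{\bb{L}^\infty}^2 < \delta := D_\ast/(2\beta D^\ast)$ for all $1\le j\le n$.

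\emph{Base case.} $H_1$ is the second condition in \eqref{equ:small m}. If that inequality is not strict, I would use the fact that $\bff{m}_h^0=P_h\bff{m}_0$ and argue as in the inductive step.

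\emph{Inductive step.} Assume $H_n$.
\begin{enumerate}
\item Proposition~\ref{pro:exist} gives existence and uniqueness of $(\bff{m}_h^j,\bff{s}_h^j)$ for all $j\le n$.
\item Lemma~\ref{lem:num stab} gives the stability bound \eqref{equ:num stab}.
\item The auxiliary estimate \eqref{equ:theta xi err} gives
\[
\norm{\bff{\theta}^n}{\bb{L}^2}\le \widetilde{C}^{1/2}e^{\widetilde C(1+\delta)T/2}\,(h^{r+1}+k).
\]
The constant here is independent of $n$, $h$ and $k$; this uniformity is what lets the induction close.
\item To recover $H_{n+1}$, split $\bff{m}_h^n=\bff{\theta}^n+\Pi_h\bff{m}^n$ and bound the pieces separately:
\begin{itemize}
\item By \eqref{equ:stab Linfty Pi} and the first condition in \eqref{equ:small m},
\[
\norm{\Pi_h\bff{m}^n}{\bb{L}^\infty}^2\le C_\Pi^2\norm{\bff{m}}{L^\infty_T(\bb{H}^2)}^2\le \delta/9 .
\]
\item By the inverse estimate \eqref{equ:inverse},
\[
\norm{\bff{\theta}^n}{\bb{L}^\infty}\le C_{\rm inv}h^{-d/2}\norm{\bff{\theta}^n}{\bb{L}^2}\le C h^{-d/2}(h^{r+1}+k).
\]
\end{itemize}
Because $r+1>d/2$, and the time-step restriction is read as $k=o(h^{d/2})$ (the stated ``$k=O(h^{-d/2})$'' should be interpreted as $k h^{-d/2}\to 0$), this term is at most $\sqrt{\delta}/3$ once $h$ and $k$ are small enough.
\item Then
\[
\norm{\bff{m}_h^n}{\bb{L}^\infty}^2\le\bigl(\sqrt{\delta}/3+\sqrt{\delta}/3\bigr)^2<\delta ,
\]
which is $H_{n+1}$.
\end{enumerate}
By induction, $H_{N}$ holds and the scheme is well-posed and stable for all $n\le N$.

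\emph{Final error estimates.} Use the decompositions \eqref{equ:theta rho} and \eqref{equ:xi eta} with the triangle inequality:
\[
\norm{\bff{m}_h^n-\bff{m}^n}{\bb{L}^2}\le\norm{\bff{\theta}^n}{\bb{L}^2}+\norm{\bff{\rho}^n}{\bb{L}^2},
\]
and likewise for $\bff{s}$ with $\bff{\xi}^n,\bff{\eta}^n$. Bound $\bff{\theta}^n,\bff{\xi}^n$ by \eqref{equ:theta xi err} and $\bff{\rho}^n,\bff{\eta}^n$ by \eqref{equ:rho t est} and \eqref{equ:eta t est}, using $\bff{v}=\bff{m}$, $\bff{s}\in L^\infty_T(\bb{H}^{r+1})$ from \eqref{equ:assum}. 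This gives the $\bb{L}^2$ bound of order $h^{2(r+1)}+k^2$.

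For the gradient sum, the $\bff{\theta},\bff{\xi}$ parts are again controlled by \eqref{equ:theta xi err}. The $\nabla\bff{\rho}^j,\nabla\bff{\eta}^j$ parts are $O(h^{r})$ by the $h$-weighted parts of \eqref{equ:rho t est} and \eqref{equ:eta t est}. Since $kn\le T$, the sum is $O(h^{2r}+k^2)$.

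\emph{Main obstacle.} The delicate step is closing the $\bb{L}^\infty$ bound uniformly in $n$. This requires:
\begin{itemize}
\item that the constant in \eqref{equ:theta xi err} does not deteriorate along the induction, which holds because $\delta$ is fixed and \eqref{equ:num stab} holds with $C_T$ independent of $n$;
\item the careful splitting of the threshold between $\Pi_h\bff{m}^n$ (the factor $18C_\Pi^2$ in \eqref{equ:small m}) and $\bff{\theta}^n$ (through the mesh and time-step smallness).
\end{itemize}
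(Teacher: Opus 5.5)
Your proposal is correct and follows the paper's proof essentially step for step. Like the paper, you run an induction on the discrete bound $\norm{\bff{m}_h^{j-1}}{\bb{L}^\infty}^2<\delta$, close it by splitting $\bff{m}_h^n=\bff{\theta}^n+\Pi_h\bff{m}^n$ and bounding each piece by $\sqrt{\delta}/3$ via the inverse estimate and \eqref{equ:stab Linfty Pi}, and then obtain the final bounds by the triangle inequality together with Proposition~\ref{pro:est rho}; your reading of the time-step condition as requiring $kh^{-d/2}$ small (the paper's proof imposes $kh^{-d/2}\le \delta^{1/2}/(3C_{\mathrm{inv}}\widetilde{C}e^{\widetilde{C}(1+\delta)T})$) and your care with strict versus non-strict inequalities, including the base case via $\bff{m}_h^0=P_h\bff{m}_0$, only tidy up the paper's looser bookkeeping.
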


\begin{proof}
Let $\delta:= D_\ast/ (2\beta D^\ast)$. We will prove the theorem by induction.

Firstly, if \eqref{equ:small m} holds, then a unique solution $(\bff{m}_h^1,\bff{s}_h^1)$ to the numerical scheme exists (by Proposition~\ref{pro:exist}), satisfying the auxiliary error estimate \eqref{equ:theta xi err} for $n=1$. By \eqref{equ:rho t est} and \eqref{equ:eta t est}, together with \eqref{equ:theta xi err} and the triangle inequality, we then have \eqref{equ:error} for $n=1$.

For the inductive step, suppose that for some $n\leq N$,
\begin{align*}
	\max_{0\leq j\leq n-1} \norm{\bff{m}_h^j}{\bb{L}^\infty}^2 \leq \delta.
\end{align*}
Under this assumption, we will show that the conclusions of the theorem hold at time step $n$ and $\norm{\bff{m}_h^n}{\bb{L}^\infty}^2 \leq \delta$, thus completing the induction. Indeed, the existence of a unique solution $(\bff{m}_h^n,\bff{s}_h^n)$ follows from Proposition~\ref{pro:exist}. The auxiliary error estimate~\eqref{equ:theta xi err} also holds. Therefore, by \eqref{equ:rho t est}, \eqref{equ:eta t est}, and the triangle inequality, we obtain~\eqref{equ:error}. 

Furthermore, by the definition of $\bff{\theta}^n$, the triangle inequality, the inverse estimate~\eqref{equ:inverse}, the stability estimate~\eqref{equ:stab Linfty Pi}, and the auxiliary error estimate~\eqref{equ:theta xi err},
\begin{align*}
	\norm{\bff{m}_h^n}{\bb{L}^\infty}
	&\leq
	\norm{\bff{\theta}^n}{\bb{L}^\infty}
	+
	\norm{\Pi_h \bff{m}^n}{\bb{L}^\infty}
	\\
	&\leq
	C_{\textrm{inv}} h^{-\frac{d}{2}} \norm{\bff{\theta}^n}{\bb{L}^2}
	+
	C_\Pi \norm{\bff{m}}{L^\infty_T(\bb{H}^2)}
	\\
	&\leq
	\left(C_{\textrm{inv}} h^{-\frac{d}{2}}\right) \widetilde{C}e^{\widetilde{C}(1+\delta)T} \left( h^{r+1}+k \right) 
	+
	C_\Pi \norm{\bff{m}}{L^\infty_T(\bb{H}^2)}
	\\
	&\leq
	C_{\textrm{inv}} \widetilde{C} e^{\widetilde{C}(1+\delta)T} h^{r+1-\frac{d}{2}} + C_{\textrm{inv}} \widetilde{C} e^{\widetilde{C}(1+\delta)T} kh^{-\frac{d}{2}}
	+ C_\Pi \norm{\bff{m}}{L^\infty_T(\bb{H}^2)}.
\end{align*}
Suppose that $h$ and $k$ are sufficiently small (noting~\eqref{equ:deg r}) such that
\begin{align*}
	h^{r+1-\frac{d}{2}} 
	\leq
	\frac{\delta^{\frac12}}{3 C_{\textrm{inv}} \widetilde{C} e^{\widetilde{C}(1+\delta)T}}
	\;\text{ and }\;
	kh^{-\frac{d}{2}}
	\leq
	\frac{\delta^{\frac12}}{3 C_{\textrm{inv}} \widetilde{C} e^{\widetilde{C}(1+\delta)T}}.
\end{align*}
Combining this with the first inequality in assumption~\eqref{equ:small m}, we obtain $\norm{\bff{m}_h^n}{\bb{L}^\infty} \leq \delta^{\frac12}$. This concludes the induction step, thus completing proof of the theorem.
\end{proof}

We note that condition \eqref{equ:small m} can be ensured by choosing sufficiently small initial data $(\bff{m}_0,\bff{s}_0)$ or a small $\beta$, in view of estimate \eqref{equ:Delta m L2}, as well as our choice of $\bff{m}_h^0$ and the stability property \eqref{equ:Ph Lp stab}.

\section{Numerical experiments} \label{sec:num exp}

We perform some numerical simulations for the numerical scheme in Algorithm~\ref{alg:scheme} using the open-source package~\textsc{FEniCS}. The results are presented in this section. Since the exact solution of the equation is not known, we use extrapolation to verify the order of convergence experimentally. To this end, let $\left(\bff{m}_h^n, \bff{s}_h^n\right)$ be the finite element solution with spatial step size $h$ and time-step size $k=\lfloor T/n\rfloor$. We define the extrapolated spatial order of convergence
\begin{equation*}
	h\text{-rate}(\cdot) :=  \log_2 \left[\frac{\max_n \norm{\bff{e}_{2h}(\cdot)}{\bb{L}^2}}{\max_n \norm{\bff{e}_{h}(\cdot)}{\bb{L}^2}}\right],
\end{equation*}
where $\bff{e}_h(\bff{m}) := \bff{m}_{h}^n-\bff{m}_{h/2}^n$ and $\bff{e}_h(\bff{s}) := \bff{s}_{h}^n-\bff{s}_{h/2}^n$. Taking the time-step size $k$ to be very small, we expect the error in the numerical scheme to be dominated by that due to spatial discretisation.

\subsection{Simulation 1 (disk magnet, above the Curie temperature)}
Let $\Omega$ be a unit disk. We take $k=1.0\times 10^{-7}$. The coefficients in~\eqref{equ:sdllb} are $\gamma=2.3\times 10^6, \alpha=1.0\times 10^5, \gamma'=3.0\times 10^{-3}, \alpha'= 1.0\times 10^{-6}, \kappa=1.0\times 10^{-9}, \mu=1.0\times 10^4, D_0=1.0\times 10^{-2}, \tau_{\textrm{sf}}=1.0\times 10^{-7}, \tau_{\textrm{J}}=5\times 10^{-8}, \beta=0.1, \beta'=1.0\times 10^{-5}$, which are of typical order of magnitude (in SI units) for a micromagnetic simulation of a ferromagnet \cite{AbeRugBru15}. The current density is given by $\bff{j}=(0,2.0\times 10^8)^\top$. The initial magnetisation vector $\bff{m}_0$ and spin vector $\bff{s}_0$ is given by
\begin{align*}
	\bff{m}_0(x,y)&= \big(-0.1y, 0.1x, 0.1(1-x^2-y^2) \big),
	\\
	\bff{s}_0(x,y)&= \big(0.1y, -0.1x, -0.1(1-x^2-y^2)\big).
\end{align*}
Snapshots of the magnetisation vector field $\bff{m}$ and the spin accumulation vector field $\bff{s}$ at selected times are shown in Figure~\ref{fig:snapshots field 2d 1} and Figure~\ref{fig:snapshots spin 2d 1}, respectively. The colours indicate the relative magnitude of the vectors. Plots of $\bff{e}_h$ against $1/h$ are displayed in Figure~\ref{fig:order u 1} and Figure~\ref{fig:order H 1}, which show the expected order of convergence. 

{The local magnetisation vector field $\bff{m}$ is observed to precess regularly around both the effective field and the spin accumulation field. Over a longer time scale, the magnitude of $\bff{m}$ is expected to decay to zero, as predicted by \eqref{equ:u Lp}. The spin accumulation field $\bff{s}$ also appears to decay to zero, as suggested by \eqref{equ:limsup s zero}, seemingly at a faster rate.}

\subsection{Simulation 2 (square magnet, above the Curie temperature)}
Let $\Omega= [-1,1]\times [-1,1]$. We take $k=1.0\times 10^{-9}$. The material coefficients are taken to be the same as in Simulation 1. The current density is given by $\bff{j}=(0,1.0\times 10^7)^\top$. The initial magnetisation vector $\bff{m}_0$ and spin vector $\bff{s}_0$ is given by
\begin{align*}
	\bff{m}_0(x,y)&= \big(-0.1y, 0.1x, 0.1\sin(2\pi x) \big),
	\\
	\bff{s}_0(x,y)&= \big(0.1x, -0.1y, 0.1xy\big).
\end{align*}
Snapshots of the magnetisation vector field $\bff{m}$ and the spin accumulation vector field $\bff{s}$ at selected times are shown in Figure~\ref{fig:snapshots field 2d 2} and Figure~\ref{fig:snapshots spin 2d 2}, respectively. The colours indicate the relative magnitude of the vectors. Plots of $\bff{e}_h$ against $1/h$ are given in Figure~\ref{fig:order u 2} and Figure~\ref{fig:order H 2}, which show the expected order of convergence. 

{The spin accumulation field and the effective field both induce torques on $\bff{m}$. The magnitudes of $\bff{m}$ and $\bff{s}$ are observed to decay to zero in this simulation as well.}

\subsection{Simulation 3 (below the Curie temperature)}
At moderate temperatures below the Curie temperature, the Ginzburg--Landau theory dictates that $\mu<0$. In the literature, equation~\eqref{equ:sdllb a} with $\gamma'=0$ is also known as the Landau--Lifshitz--Baryakhtar equation in the limiting case of vanishing higher-order damping term~\cite{Bar84, SoeTra23, WanDvo15}. Our numerical scheme and its error analysis also applies to this case with minor modifications.

Let $\Omega= [-1,1]\times [-1,1]$. We take $k=1.0\times 10^{-9}$. The coefficients in~\eqref{equ:sdllb} are now taken to be $\gamma=2.0\times 10^6, \alpha=2.0\times 10^5, \gamma'=1.0\times 10^{-3}, \alpha'= 1.2\times 10^{-6}, \kappa=1.0\times 10^{-9}, \mu=-1.0\times 10^4, D_0=2.0\times 10^{-2}, \tau_{\textrm{sf}}=2.0\times 10^{-7}, \tau_{\textrm{J}}=1.0\times 10^{-7}, \beta=0.2, \beta'=1.0\times 10^{-5}$. The current density is given by $\bff{j}=(1.0\times 10^6, 0)^\top$. The initial magnetisation vector $\bff{m}_0$ and spin vector $\bff{s}_0$ is given by
\begin{align*}
	\bff{m}_0(x,y)&= \big(0.2\sin(2\pi y), 0.2\sin(2\pi x), 0.05 \big),
	\\
	\bff{s}_0(x,y)&= \big(0.1\cos(2\pi x), -0.1\cos(\pi x), 0.1xy\big).
\end{align*}
Snapshots of the magnetisation vector field $\bff{m}$ and the spin accumulation vector field $\bff{s}$ at selected times are shown in Figure~\ref{fig:snapshots field 2d 3} and Figure~\ref{fig:snapshots spin 2d 3}, respectively. The colours indicate the relative magnitude of the vectors. Plots of $\bff{e}_h$ against $1/h$ are displayed in Figure~\ref{fig:order u 3} and Figure~\ref{fig:order H 3}, which show the expected order of convergence. 

{In this moderate temperature regime, Figure \ref{fig:snapshots field 2d 3} appears to show varying degrees of torque and precession in the local magnetisation vectors. However, the magnitude of $\bff{m}$ does not decay to zero, consistent with the observations in~\cite{Soe25b, Soe24}; in the present case, the additional torque induces nonuniformity in the magnetisation. Notably, Figure \ref{fig:snapshots spin 2d 3} qualitatively indicates that the magnitude of the spin accumulation $\bff{s}$ still decays to zero.}

\section{Conclusion}

We have shown the existence and uniqueness of global solution to the spin-diffusion Landau--Lifshitz--Bloch equation, which is a system of coupled quasilinear PDEs modelling the evolution of the magnetisation vector field in the presence of spin-polarised currents at high temperatures. We then propose a finite element scheme which only involves solving two decoupled linear systems and perform a rigorous error analysis, assuming adequate regularity of the exact solution and sufficiently small initial data. Some numerical results are presented which confirm our theoretical analysis. 

For future research, we will consider the problem posed in a multi-layer domain which leads to a system with discontinuous coefficients. Higher-order time-marching scheme involving the backward difference formulae (as in~\cite{AkrFeiKovLub21, GuiWanChe24, HuZhaAn22}) will also be considered in a future work.

\section*{Funding information}
The author is supported by the Australian Government Research
Training Program (RTP) Scholarship awarded at the University of New South Wales, Sydney.
Financial support from the Australian Research Council under grant number DP200101866 is gratefully acknowledged.

\section*{Conflict of interest declaration}
The authors declare no conflict of interest.

\section*{Data availability statement}
Data sharing not applicable to this article as no datasets were generated or analysed during the current study.

\section*{Acknowledgements}
The author would like to thank the referees for their careful reading and valuable comments, which have significantly improved the quality of the paper.

\begin{figure}[!htb]
	\centering
	\begin{subfigure}[b]{0.28\textwidth}
		\centering
		\includegraphics[width=\textwidth]{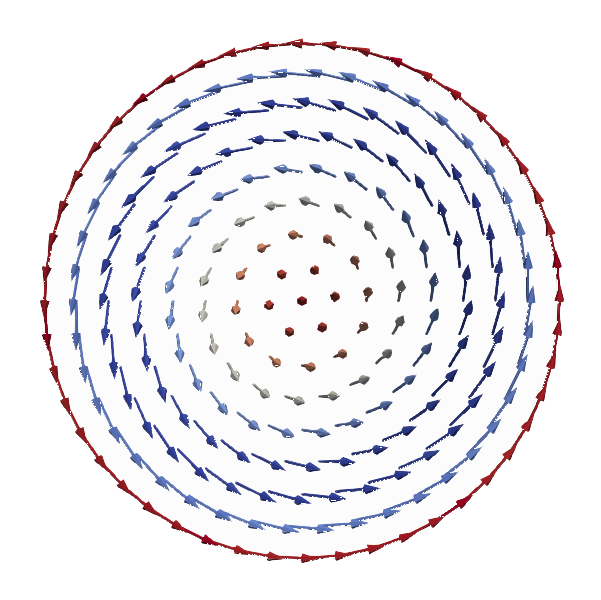}
		\caption{$t=0$}
	\end{subfigure}
	\begin{subfigure}[b]{0.28\textwidth}
		\centering
		\includegraphics[width=\textwidth]{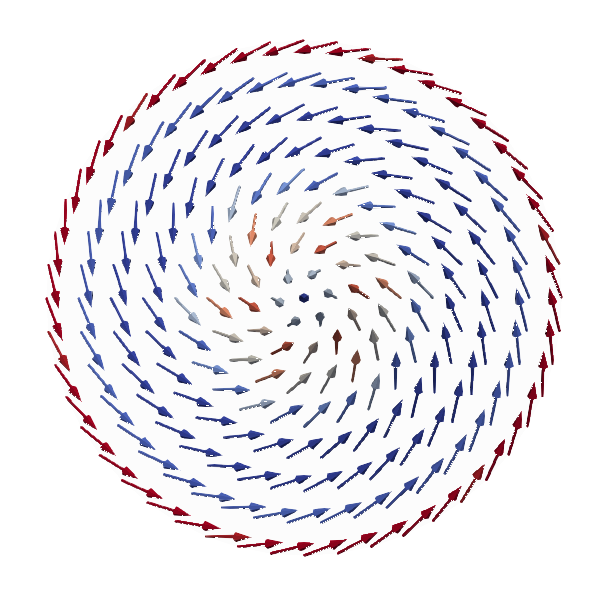}
		\caption{$t=2.5\times 10^{-7}$}
	\end{subfigure}
	\begin{subfigure}[b]{0.28\textwidth}
		\centering
		\includegraphics[width=\textwidth]{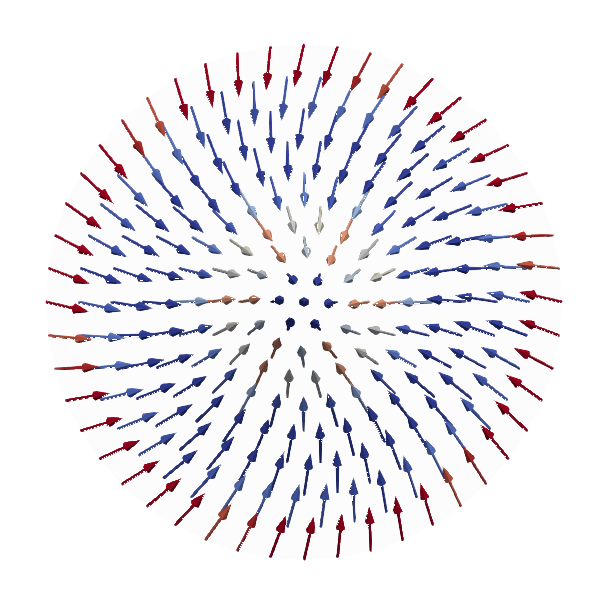}
		\caption{$t=1.0 \times 10^{-6}$}
	\end{subfigure}
	\begin{subfigure}[b]{0.1\textwidth}
		\centering
		\includegraphics[width=\textwidth]{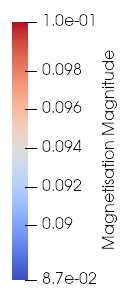}
	\end{subfigure}
	\begin{subfigure}[b]{0.28\textwidth}
		\centering
		\includegraphics[width=\textwidth]{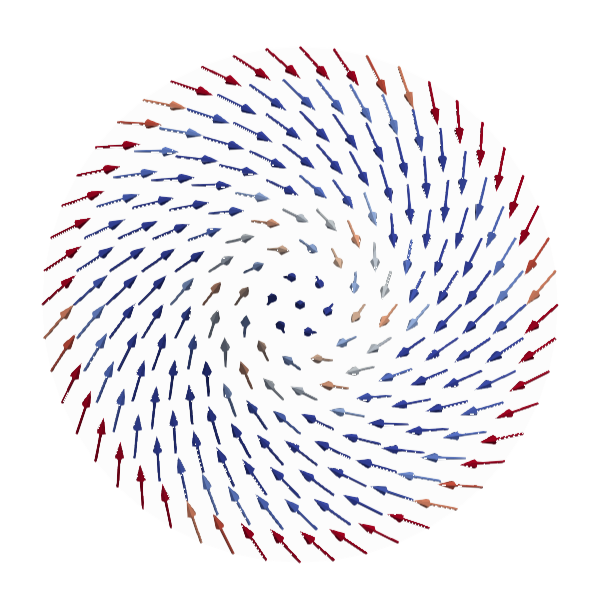}
		\caption{$t=2.0\times 10^{-6}$}
	\end{subfigure}
	\begin{subfigure}[b]{0.28\textwidth}
		\centering
		\includegraphics[width=\textwidth]{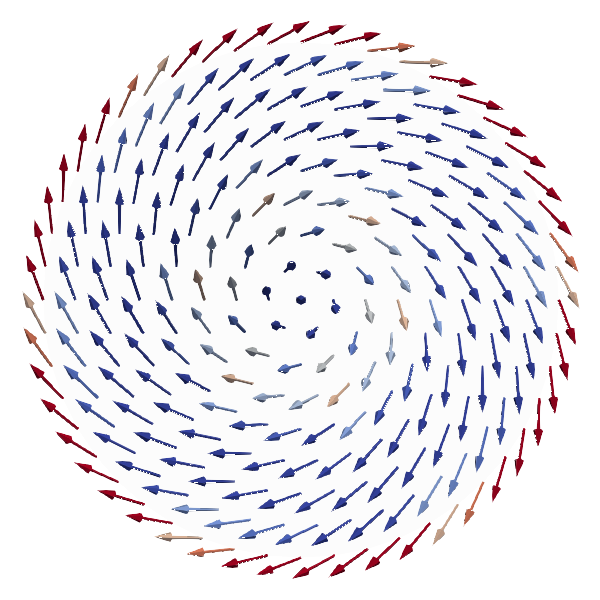}
		\caption{$t=4.0\times 10^{-6}$}
	\end{subfigure}
	\begin{subfigure}[b]{0.28\textwidth}
		\centering
		\includegraphics[width=\textwidth]{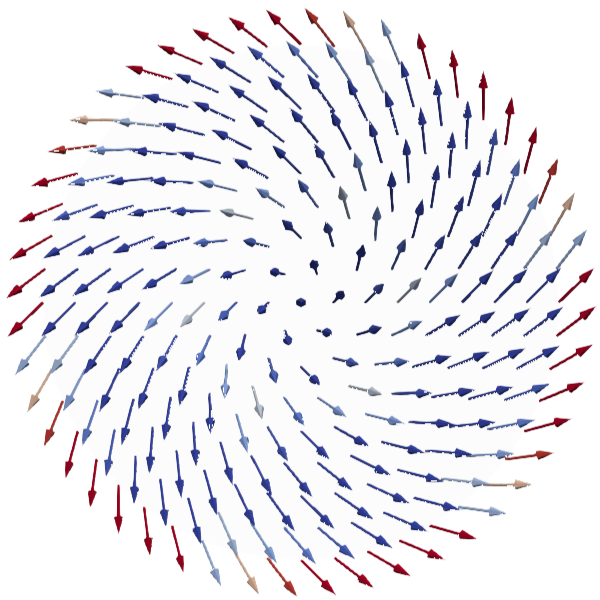}
		\caption{$t=5.0\times 10^{-6}$}
	\end{subfigure}
	\begin{subfigure}[b]{0.1\textwidth}
		\centering
		\includegraphics[width=\textwidth]{md_legend.png}
	\end{subfigure}
	\caption{Snapshots of the magnetisation vector field $\bff{m}$ (projected onto $\bb{R}^2$) for simulation 1.}
	\label{fig:snapshots field 2d 1}
\end{figure}

\begin{figure}[!htb]
	\centering
	\begin{subfigure}[b]{0.28\textwidth}
		\centering
		\includegraphics[width=\textwidth]{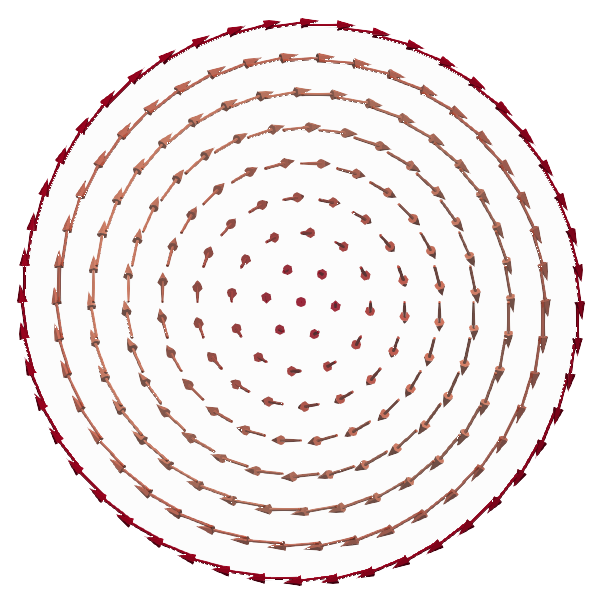}
		\caption{$t=0$}
	\end{subfigure}
	\begin{subfigure}[b]{0.28\textwidth}
		\centering
		\includegraphics[width=\textwidth]{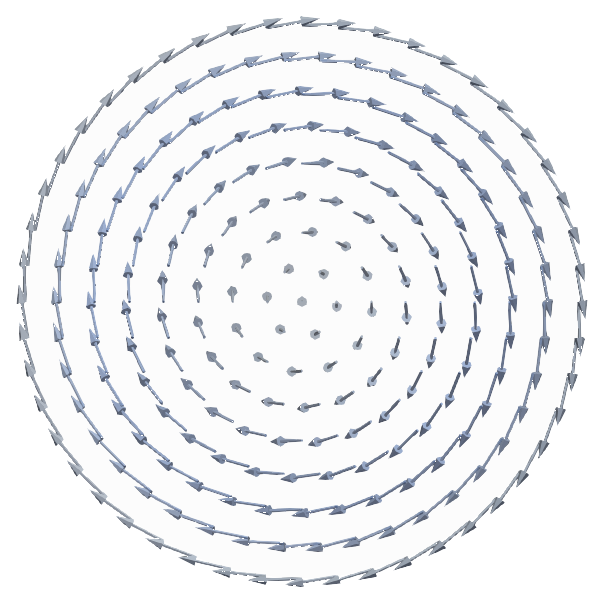}
		\caption{$t=2.5\times 10^{-6}$}
	\end{subfigure}
	\begin{subfigure}[b]{0.28\textwidth}
		\centering
		\includegraphics[width=\textwidth]{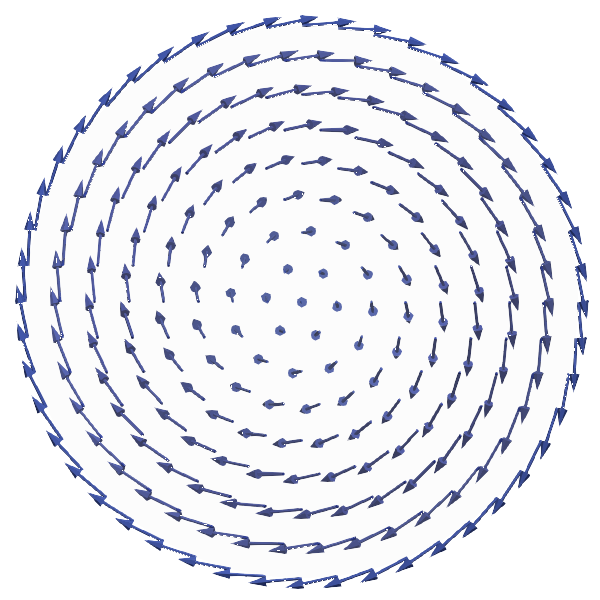}
		\caption{$t=5.0 \times 10^{-6}$}
	\end{subfigure}
	\begin{subfigure}[b]{0.1\textwidth}
		\centering
		\includegraphics[width=\textwidth]{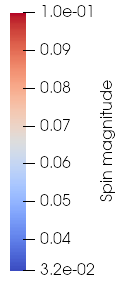}
	\end{subfigure}
	\caption{Snapshots of the spin accumulation vector field $\bff{s}$ (projected onto $\bb{R}^2$) for simulation 1.}
	\label{fig:snapshots spin 2d 1}
\end{figure}

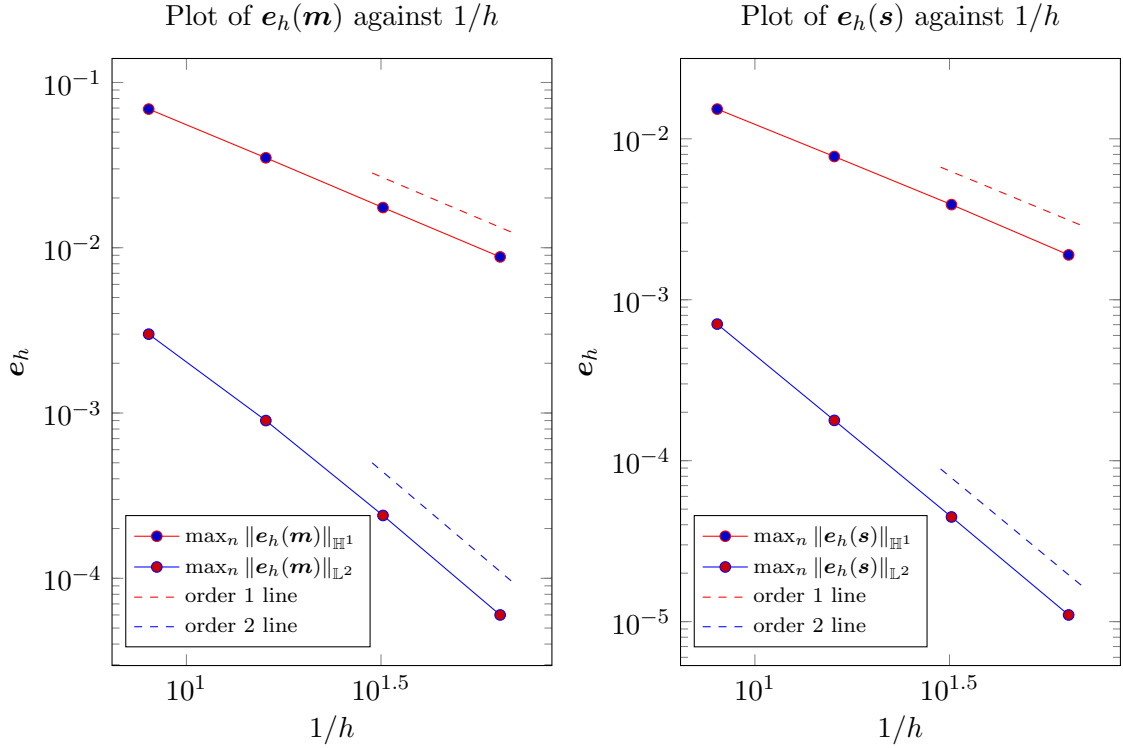
\begin{figure}[!htb]
	\begin{subfigure}[b]{0.45\textwidth}
		\centering
		\begin{tikzpicture}
			\begin{axis}[
				title=Plot of $\bff{e}_h(\bff{m})$ against $1/h$,
				height=1.3\textwidth,
				width=1\textwidth,
				xlabel= $1/h$,
				ylabel= $\bff{e}_h$,
				xmode=log,
				ymode=log,
				legend pos=south west,
				legend cell align=left,
				]
				\addplot+[mark=*,red] coordinates {(8,0.069)(16,0.035)(32,0.0175)(64,0.0088)};
				\addplot+[mark=*,blue] coordinates {(8,0.003)(16,0.0009)(32,0.00024)(64,0.00006)};
				\addplot+[dashed,no marks,red,domain=30:70]{0.85/x};
				\addplot+[dashed,no marks,blue,domain=30:70]{0.45/x^2};
				\legend{\scriptsize{$\max_n \norm{\bff{e}_h(\bff{m})}{\bb{H}^1}$}, \scriptsize{$\max_n \norm{\bff{e}_h(\bff{m})}{\bb{L}^2}$}, \scriptsize{order 1 line}, \scriptsize{order 2 line}}
			\end{axis}
		\end{tikzpicture}
		\caption{Error order of $\bff{m}$ for simulation 1.}
		\label{fig:order u 1}
	\end{subfigure}
	\begin{subfigure}[b]{0.45\textwidth}
		\centering
		\begin{tikzpicture}
			\begin{axis}[
				title=Plot of $\bff{e}_h(\bff{s})$ against $1/h$,
				height=1.3\textwidth,
				width=1\textwidth,
				xlabel= $1/h$,
				ylabel= $\bff{e}_h$,
				xmode=log,
				ymode=log,
				legend pos=south west,
				legend cell align=left,
				]
				\addplot+[mark=*,red] coordinates {(8,0.0153)(16,0.00776)(32,0.0039)(64,0.0019)};
				\addplot+[mark=*,blue] coordinates {(8,0.000706)(16,0.000178)(32,0.0000447)(64,0.000011)};
				\addplot+[dashed,no marks,red,domain=30:70]{0.2/x};
				\addplot+[dashed,no marks,blue,domain=30:70]{0.08/x^2};
				\legend{\scriptsize{$\max_n \norm{\bff{e}_h(\bff{s})}{\bb{H}^1}$}, \scriptsize{$\max_n \norm{\bff{e}_h(\bff{s})}{\bb{L}^2}$}, \scriptsize{order 1 line}, \scriptsize{order 2 line}}
			\end{axis}
		\end{tikzpicture}
		\caption{Error order of $\bff{s}$ for simulation 1.}
		\label{fig:order H 1}
	\end{subfigure}
	\caption{Order of convergence of $\bff{m}$ and $\bff{s}$ for simulation 1.}
\end{figure}

\begin{figure}[!htb]
	\centering
	\begin{subfigure}[b]{0.28\textwidth}
		\centering
		\includegraphics[width=\textwidth]{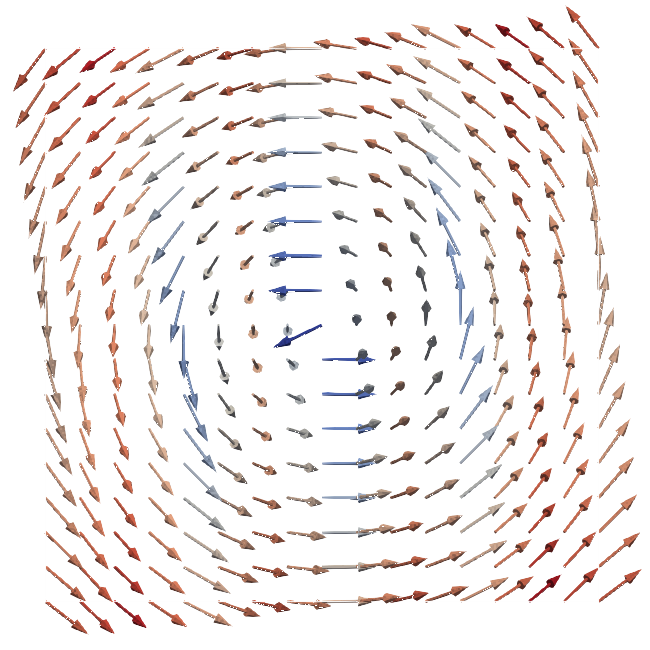}
		\caption{$t=0$}
	\end{subfigure}
	\begin{subfigure}[b]{0.28\textwidth}
		\centering
		\includegraphics[width=\textwidth]{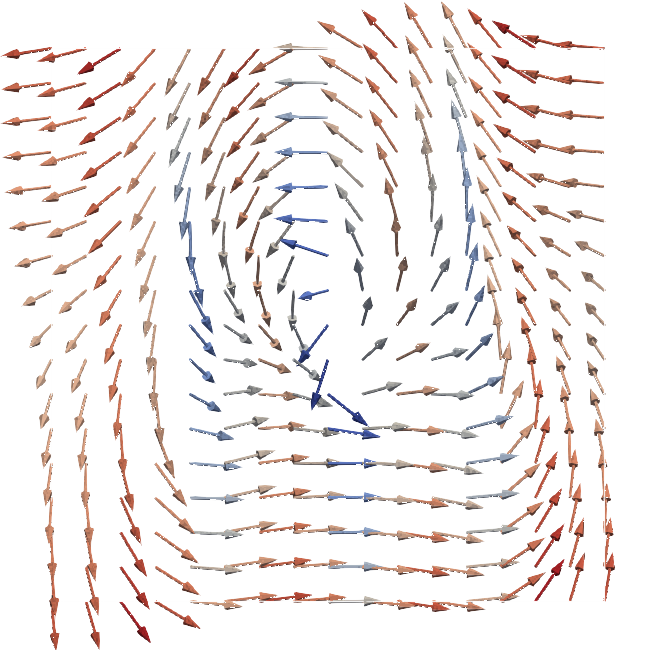}
		\caption{$t=2.5\times 10^{-7}$}
	\end{subfigure}
	\begin{subfigure}[b]{0.28\textwidth}
		\centering
		\includegraphics[width=\textwidth]{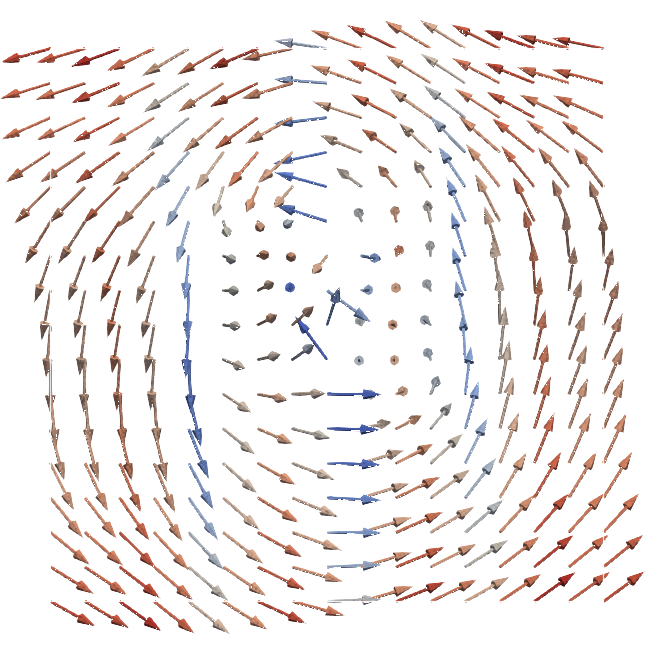}
		\caption{$t=1.0 \times 10^{-6}$}
	\end{subfigure}
	\begin{subfigure}[b]{0.1\textwidth}
		\centering
		\includegraphics[width=\textwidth]{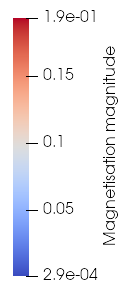}
	\end{subfigure}
	\begin{subfigure}[b]{0.28\textwidth}
		\centering
		\includegraphics[width=\textwidth]{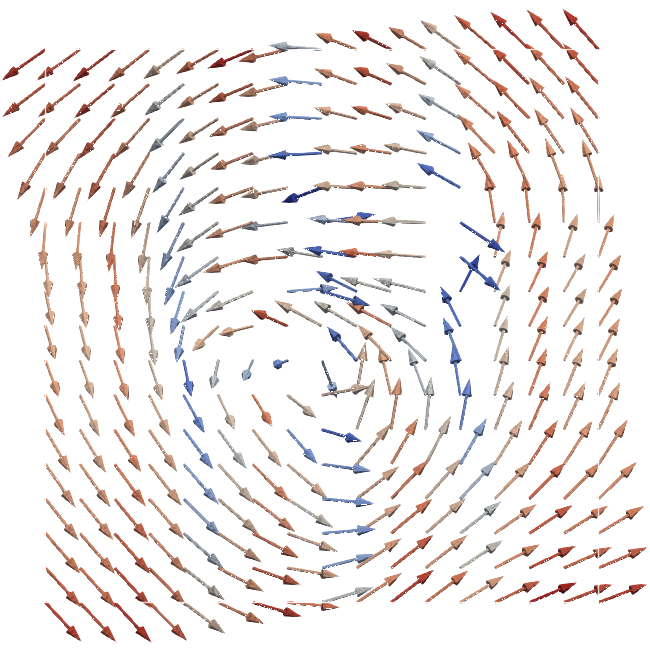}
		\caption{$t=2.0\times 10^{-6}$}
	\end{subfigure}
	\begin{subfigure}[b]{0.28\textwidth}
		\centering
		\includegraphics[width=\textwidth]{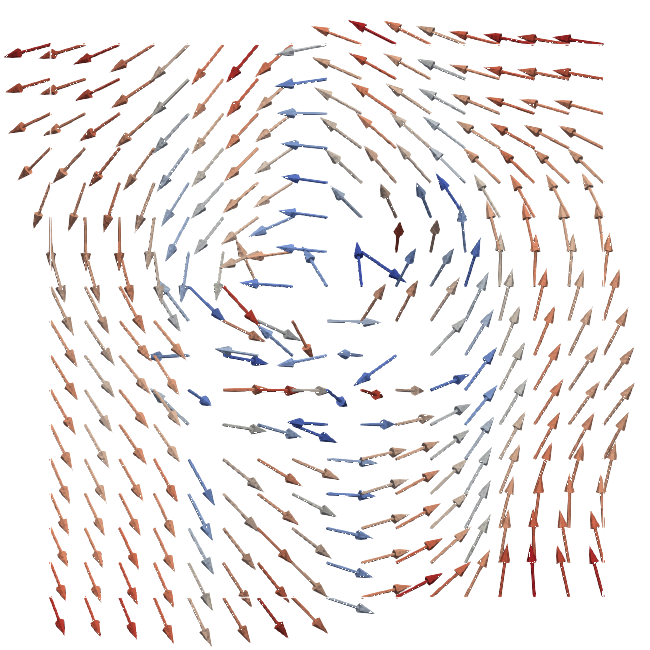}
		\caption{$t=4.0\times 10^{-6}$}
	\end{subfigure}
	\begin{subfigure}[b]{0.28\textwidth}
		\centering
		\includegraphics[width=\textwidth]{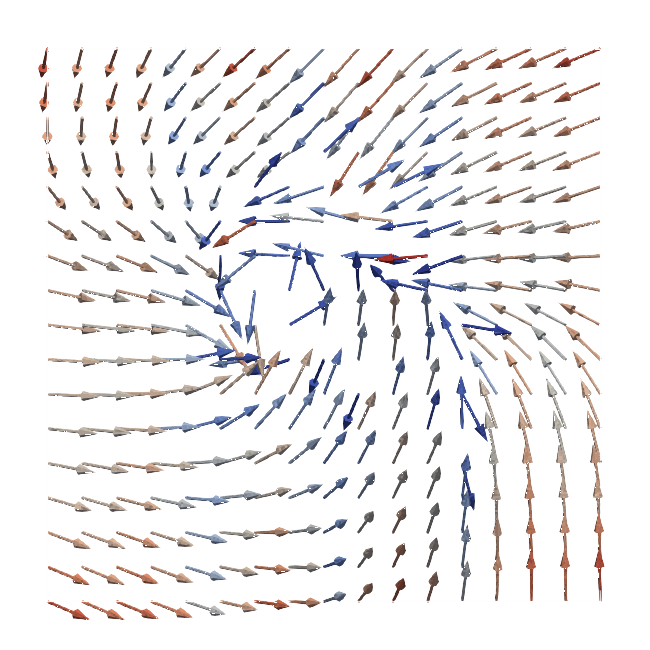}
		\caption{$t=5.0\times 10^{-6}$}
	\end{subfigure}
		\begin{subfigure}[b]{0.1\textwidth}
		\centering
		\includegraphics[width=\textwidth]{m1_legend.png}
	\end{subfigure}
	\caption{Snapshots of the magnetisation vector field $\bff{m}$ (projected onto $\bb{R}^2$) for simulation 2.}
	\label{fig:snapshots field 2d 2}
\end{figure}

\begin{figure}[!htb]
	\centering
	\begin{subfigure}[b]{0.28\textwidth}
		\centering
		\includegraphics[width=\textwidth]{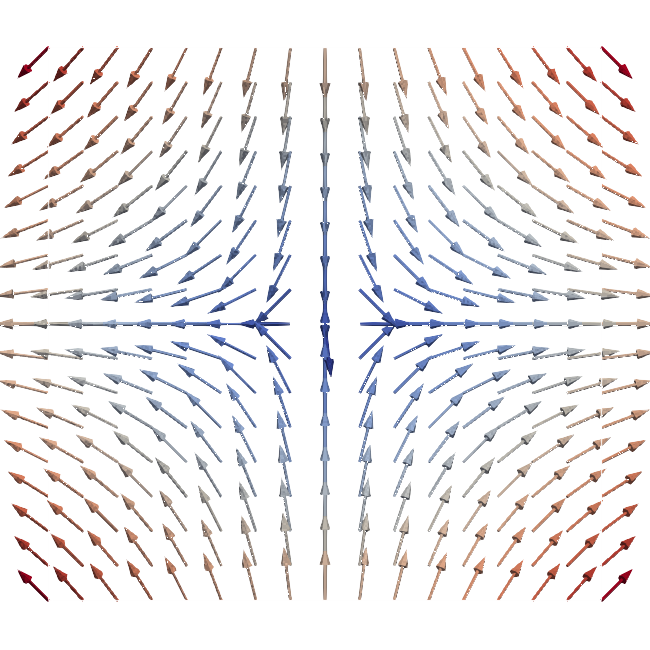}
		\caption{$t=0$}
	\end{subfigure}
	\begin{subfigure}[b]{0.28\textwidth}
		\centering
		\includegraphics[width=\textwidth]{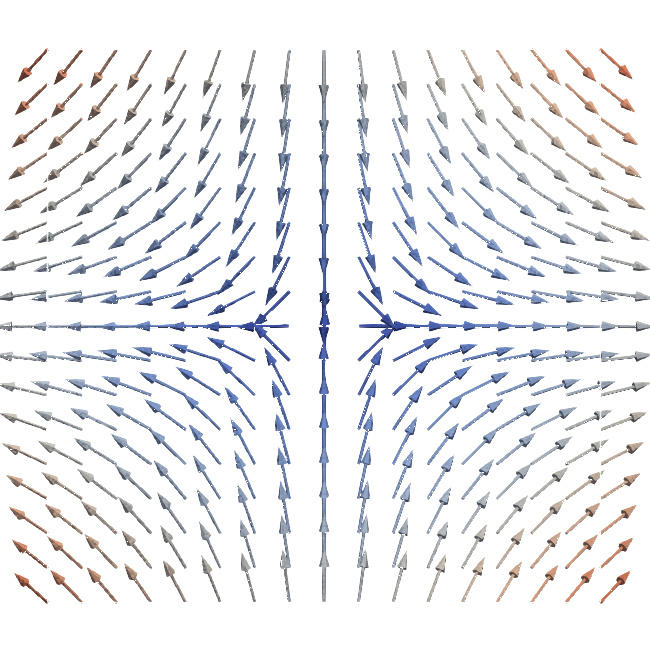}
		\caption{$t=2.5\times 10^{-6}$}
	\end{subfigure}
	\begin{subfigure}[b]{0.28\textwidth}
		\centering
		\includegraphics[width=\textwidth]{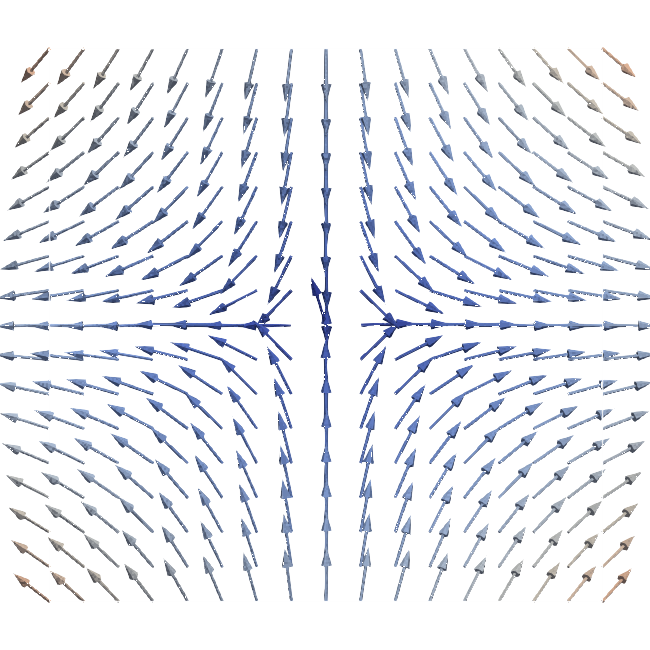}
		\caption{$t=5.0 \times 10^{-6}$}
	\end{subfigure}
	\begin{subfigure}[b]{0.1\textwidth}
		\centering
		\includegraphics[width=\textwidth]{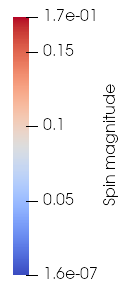}
	\end{subfigure}
	\caption{Snapshots of the spin accumulation vector field $\bff{s}$ (projected onto $\bb{R}^2$) for simulation 2.}
	\label{fig:snapshots spin 2d 2}
\end{figure}

\begin{figure}[!htb]
	\begin{subfigure}[b]{0.45\textwidth}
		\centering
		\begin{tikzpicture}
			\begin{axis}[
				title=Plot of $\bff{e}_h(\bff{m})$ against $1/h$,
				height=1.3\textwidth,
				width=1\textwidth,
				xlabel= $1/h$,
				ylabel= $\bff{e}_h$,
				xmode=log,
				ymode=log,
				legend pos=south west,
				legend cell align=left,
				]
				\addplot+[mark=*,red] coordinates {(8,0.38)(16,0.2)(32,0.11)(64,0.057)(128,0.03)};
				\addplot+[mark=*,blue] coordinates {(8,0.0254)(16,0.0066)(32,0.0017)(64,0.00042)(128,0.00012)};
				\addplot+[dashed,no marks,red,domain=35:135]{6.7/x};
				\addplot+[dashed,no marks,blue,domain=35:135]{3.8/x^2};
				\legend{\scriptsize{$\max_n \norm{\bff{e}_h(\bff{m})}{\bb{H}^1}$}, \scriptsize{$\max_n \norm{\bff{e}_h(\bff{m})}{\bb{L}^2}$}, \scriptsize{order 1 line}, \scriptsize{order 2 line}}
			\end{axis}
		\end{tikzpicture}
		\caption{Error order of $\bff{u}$ for simulation 2.}
		\label{fig:order u 2}
	\end{subfigure}
	\begin{subfigure}[b]{0.45\textwidth}
		\centering
		\begin{tikzpicture}
			\begin{axis}[
				title=Plot of $\bff{e}_h(\bff{s})$ against $1/h$,
				height=1.3\textwidth,
				width=1\textwidth,
				xlabel= $1/h$,
				ylabel= $\bff{e}_h$,
				xmode=log,
				ymode=log,
				legend pos=south west,
				legend cell align=left,
				]
				\addplot+[mark=*,red] coordinates {(8,0.029)(16,0.014)(32,0.0073)(64,0.0037)(128,0.0019)};
				\addplot+[mark=*,blue] coordinates {(8,0.00126)(16,0.000314)(32,0.0000786)(64,0.0000197)(128,0.000005)};
				\addplot+[dashed,no marks,red,domain=35:135]{0.4/x};
				\addplot+[dashed,no marks,blue,domain=35:135]{0.18/x^2};
				\legend{\scriptsize{$\max_n \norm{\bff{e}_h(\bff{s})}{\bb{H}^1}$}, \scriptsize{$\max_n \norm{\bff{e}_h(\bff{s})}{\bb{L}^2}$}, \scriptsize{order 1 line}, \scriptsize{order 2 line}}
			\end{axis}
		\end{tikzpicture}
		\caption{Error order of $\bff{s}$ for simulation 2.}
		\label{fig:order H 2}
	\end{subfigure}
	\caption{Order of convergence of $\bff{m}$ and $\bff{s}$ for simulation 2.}
\end{figure}

\begin{figure}[!htb]
	\centering
	\begin{subfigure}[b]{0.28\textwidth}
		\centering
		\includegraphics[width=\textwidth]{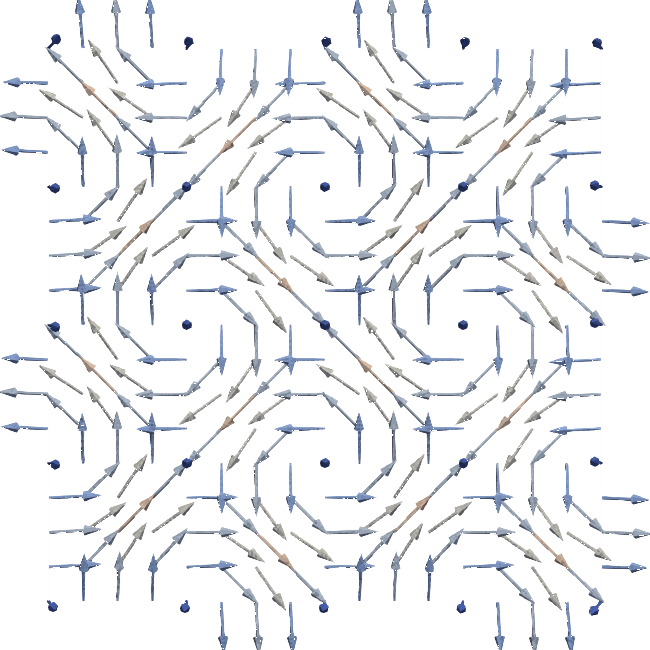}
		\caption{$t=0$}
	\end{subfigure}
	\begin{subfigure}[b]{0.28\textwidth}
		\centering
		\includegraphics[width=\textwidth]{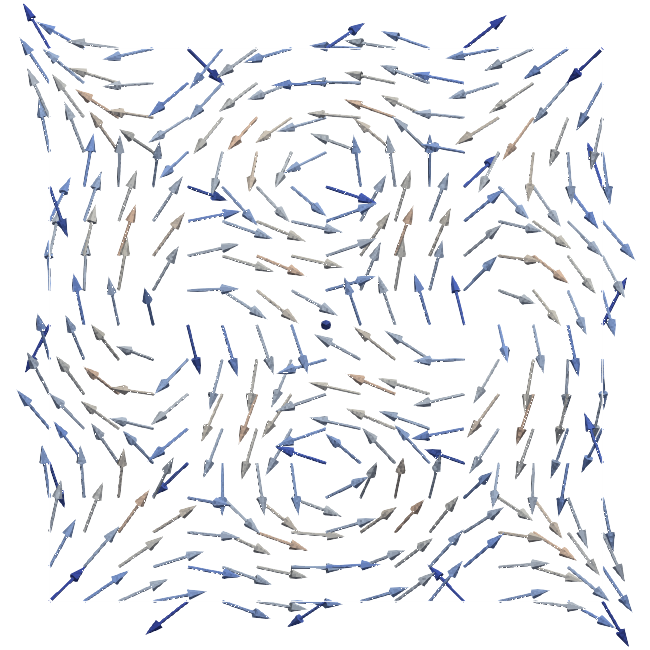}
		\caption{$t=1.0\times 10^{-7}$}
	\end{subfigure}
	\begin{subfigure}[b]{0.28\textwidth}
		\centering
		\includegraphics[width=\textwidth]{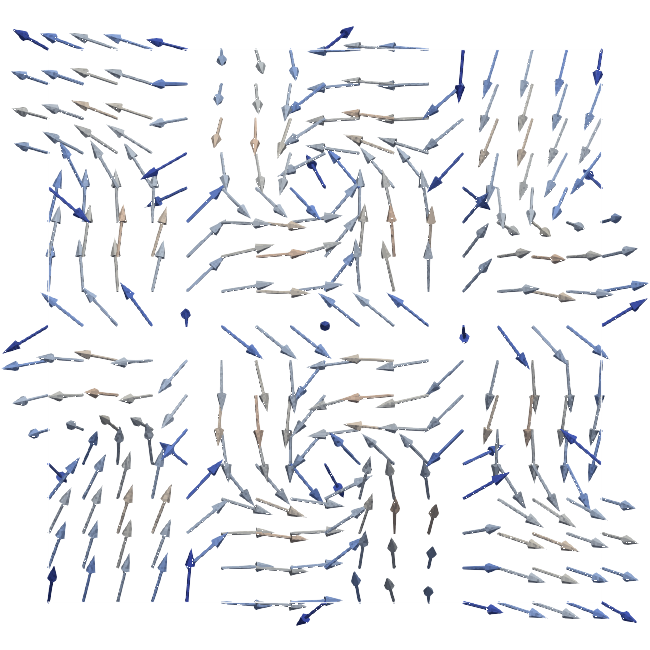}
		\caption{$t=2.5 \times 10^{-7}$}
	\end{subfigure}
	\begin{subfigure}[b]{0.1\textwidth}
		\centering
		\includegraphics[width=\textwidth]{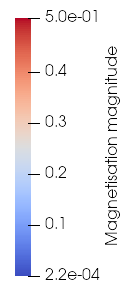}
	\end{subfigure}
	\begin{subfigure}[b]{0.28\textwidth}
		\centering
		\includegraphics[width=\textwidth]{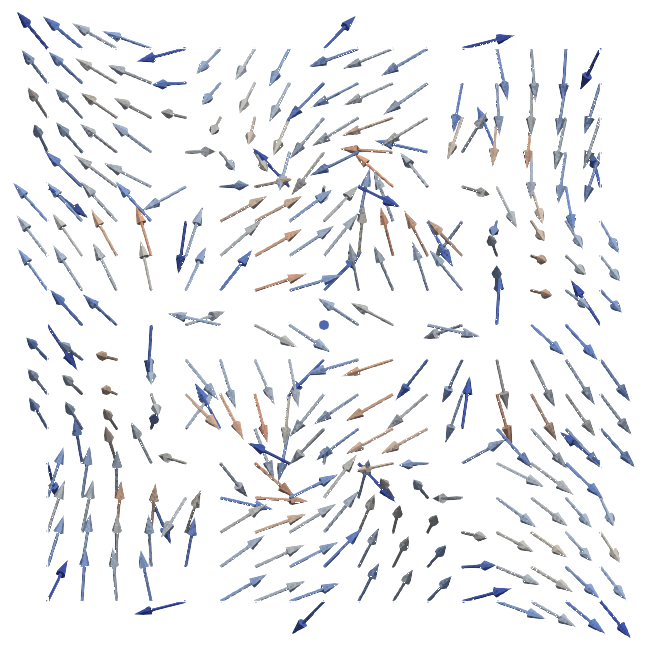}
		\caption{$t=5.0\times 10^{-7}$}
	\end{subfigure}
	\begin{subfigure}[b]{0.28\textwidth}
		\centering
		\includegraphics[width=\textwidth]{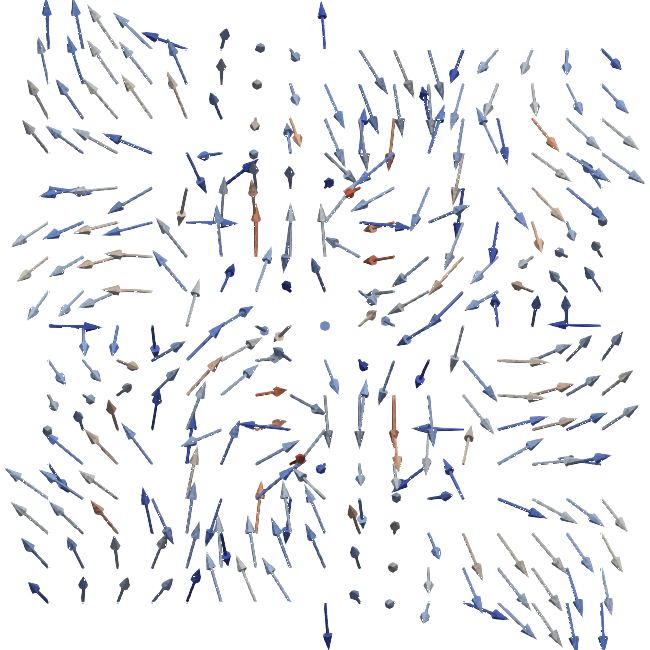}
		\caption{$t=1.0\times 10^{-6}$}
	\end{subfigure}
	\begin{subfigure}[b]{0.28\textwidth}
		\centering
		\includegraphics[width=\textwidth]{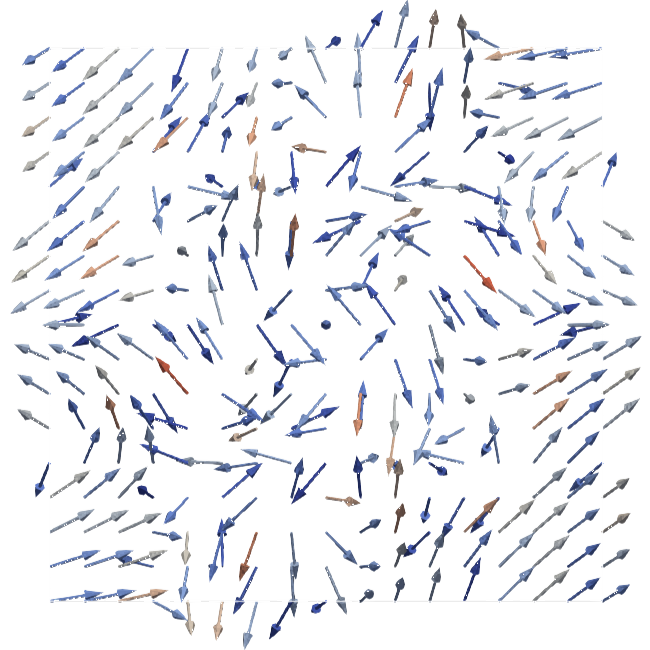}
		\caption{$t=2.5\times 10^{-6}$}
	\end{subfigure}
	\begin{subfigure}[b]{0.1\textwidth}
		\centering
		\includegraphics[width=\textwidth]{mb_legend.png}
	\end{subfigure}
	\caption{Snapshots of the magnetisation vector field $\bff{m}$ (projected onto $\bb{R}^2$) for simulation 3.}
	\label{fig:snapshots field 2d 3}
\end{figure}

\begin{figure}[!htb]
	\centering
	\begin{subfigure}[b]{0.28\textwidth}
		\centering
		\includegraphics[width=\textwidth]{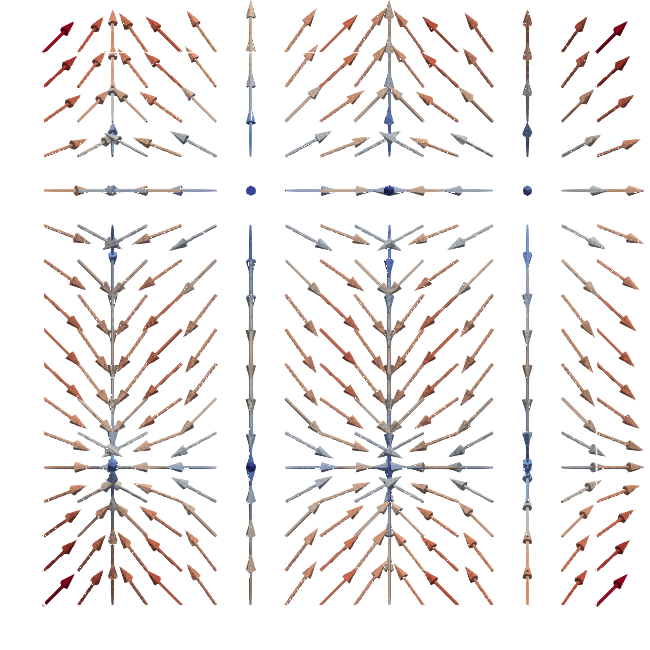}
		\caption{$t=0$}
	\end{subfigure}
	\begin{subfigure}[b]{0.28\textwidth}
		\centering
		\includegraphics[width=\textwidth]{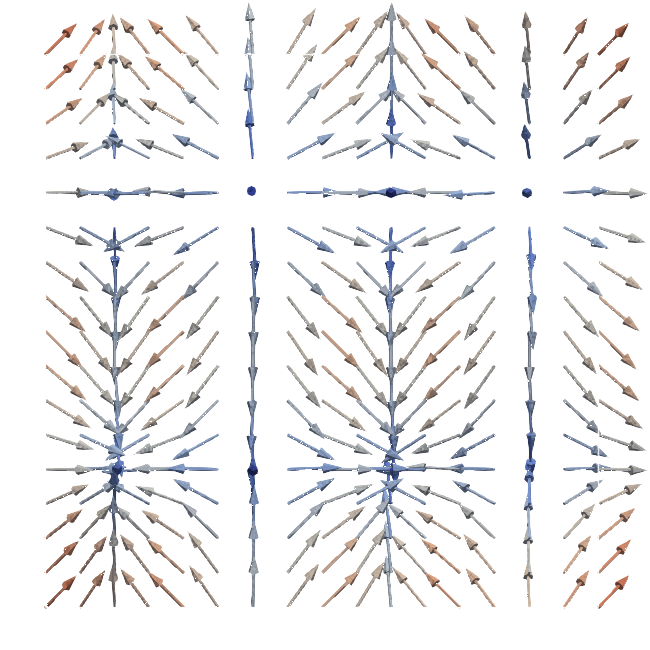}
		\caption{$t=2.5\times 10^{-6}$}
	\end{subfigure}
	\begin{subfigure}[b]{0.28\textwidth}
		\centering
		\includegraphics[width=\textwidth]{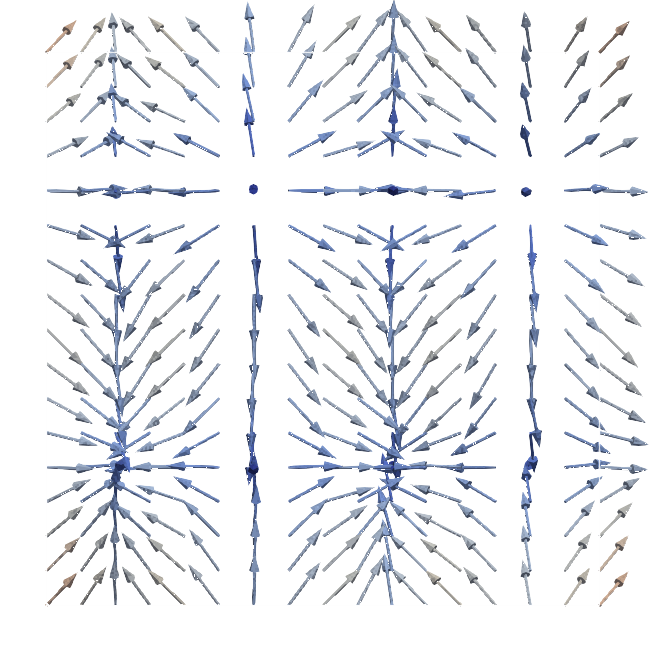}
		\caption{$t=5.0 \times 10^{-6}$}
	\end{subfigure}
	\begin{subfigure}[b]{0.1\textwidth}
		\centering
		\includegraphics[width=\textwidth]{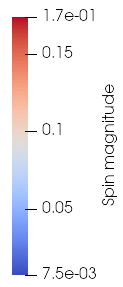}
	\end{subfigure}
	\caption{Snapshots of the spin accumulation vector field $\bff{s}$ (projected onto $\bb{R}^2$) for simulation 3.}
	\label{fig:snapshots spin 2d 3}
\end{figure}

\begin{figure}[!htb]
	\begin{subfigure}[b]{0.45\textwidth}
		\centering
		\begin{tikzpicture}
			\begin{axis}[
				title=Plot of $\bff{e}_h(\bff{m})$ against $1/h$,
				height=1.3\textwidth,
				width=1\textwidth,
				xlabel= $1/h$,
				ylabel= $\bff{e}_h$,
				xmode=log,
				ymode=log,
				legend pos=south west,
				legend cell align=left,
				]
				\addplot+[mark=*,red] coordinates {(8,1.1)(16,0.57)(32,0.29)(64,0.15)};
				\addplot+[mark=*,blue] coordinates {(8,0.072)(16,0.019)(32,0.0048)(64,0.0012)};
				\addplot+[dashed,no marks,red,domain=30:65]{13.5/x};
				\addplot+[dashed,no marks,blue,domain=30:65]{9/x^2};
				\legend{\scriptsize{$\max_n \norm{\bff{e}_h(\bff{m})}{\bb{H}^1}$}, \scriptsize{$\max_n \norm{\bff{e}_h(\bff{m})}{\bb{L}^2}$}, \scriptsize{order 1 line}, \scriptsize{order 2 line}}
			\end{axis}
		\end{tikzpicture}
		\caption{Error order of $\bff{m}$ for simulation 3.}
		\label{fig:order u 3}
	\end{subfigure}
	\begin{subfigure}[b]{0.45\textwidth}
		\centering
		\begin{tikzpicture}
			\begin{axis}[
				title=Plot of $\bff{e}_h(\bff{s})$ against $1/h$,
				height=1.3\textwidth,
				width=1\textwidth,
				xlabel= $1/h$,
				ylabel= $\bff{e}_h$,
				xmode=log,
				ymode=log,
				legend pos=south west,
				legend cell align=left,
				]
				\addplot+[mark=*,red] coordinates {(8,0.4)(16,0.21)(32,0.105)(64,0.053)};
				\addplot+[mark=*,blue] coordinates {(8,0.026)(16,0.0068)(32,0.00172)(64,0.00043)};
				\addplot+[dashed,no marks,red,domain=30:65]{5/x};
				\addplot+[dashed,no marks,blue,domain=30:65]{3/x^2};
				\legend{\scriptsize{$\max_n \norm{\bff{e}_h(\bff{s})}{\bb{H}^1}$}, \scriptsize{$\max_n \norm{\bff{e}_h(\bff{s})}{\bb{L}^2}$}, \scriptsize{order 1 line}, \scriptsize{order 2 line}}
			\end{axis}
		\end{tikzpicture}
		\caption{Error order of $\bff{s}$ for simulation 3.}
		\label{fig:order H 3}
	\end{subfigure}
	\caption{Order of convergence of $\bff{m}$ and $\bff{s}$ for simulation 3.}
\end{figure}


\newcommand{\noopsort}[1]{}\def\cprime{$'$}
\def\soft#1{\leavevmode\setbox0=\hbox{h}\dimen7=\ht0\advance \dimen7
	by-1ex\relax\if t#1\relax\rlap{\raise.6\dimen7
		\hbox{\kern.3ex\char'47}}#1\relax\else\if T#1\relax
	\rlap{\raise.5\dimen7\hbox{\kern1.3ex\char'47}}#1\relax \else\if
	d#1\relax\rlap{\raise.5\dimen7\hbox{\kern.9ex \char'47}}#1\relax\else\if
	D#1\relax\rlap{\raise.5\dimen7 \hbox{\kern1.4ex\char'47}}#1\relax\else\if
	l#1\relax \rlap{\raise.5\dimen7\hbox{\kern.4ex\char'47}}#1\relax \else\if
	L#1\relax\rlap{\raise.5\dimen7\hbox{\kern.7ex
			\char'47}}#1\relax\else\message{accent \string\soft \space #1 not
		defined!}#1\relax\fi\fi\fi\fi\fi\fi}

\end{document}